\definecolor{green}{rgb}{0.1,1,0.1}
\newtheorem{corollary}{Corollary}[section]
\newtheorem{theorem}[corollary]{Theorem}
\newtheorem{lemma}[corollary]{Lemma}
\newtheorem{proposition}[corollary]{Proposition}
 \newtheorem{lem}[corollary]{Lemma}
\newtheorem*{rmk}{Remark}
\theoremstyle{definition}
\newcommand{\ls}{\leqslant}
\newcommand{\gs}{\geqslant}
\newcommand{\one}{\mathds{1}}
\newcommand{\xn}{(x_n)_{n \in \mathbb{N}} }
\newcommand{\om}{\omega}
\def \D {\Delta}
\def \bE {\mathbb E}
\def \bN {\mathbb N}
\def \bR {\mathbb R}
\newcommand{\cP}{\mathcal{P}}
\newcommand{\bfi}{\mathbf{i}}
\begin{document}
\baselineskip=17pt
\title{  Poissonian Correlations of Higher Orders }
\author{Manuel Hauke}
\address{TU Graz, Austria}
\email{hauke@math.tugraz.at}
\author[A. Zafeiropoulos]{Agamemnon Zafeiropoulos}
\address{NTNU Trondheim, Norway}
\email{agamemnon.zafeiropoulos@ntnu.no}

\date{}
\thanks{ 
	AZ is supported by a postdoctoral fellowship funded by Grant 275113 of the Research Council of Norway} 
\begin{abstract}
 We show that any sequence $\xn \subseteq [0,1]$ that has Poissonian correlations of $k$\,--\,th order is  uniformly distributed,  also providing a quantitative description of this phenomenon. 
 Additionally, we extend connections between metric correlations and additive energy, already known for pair correlations, to higher orders.
Furthermore, we examine how the property of Poissonian $k$\,--\,th correlations is reflected in the asymptotic size of the moments of the function $F(t,s,N) = \#\{n\ls N : \|x_n - t\| \ls s/(2N) \},\, t\in [0,1]. $
\end{abstract}

\subjclass[2010]{Primary 11K06, 11J71; Secondary 11K99}
\maketitle

\section{Introduction}

\subsection{The main results} Let $\xn\subseteq [0,1]$ be a sequence and $k\gs 2$ be an integer. Given a  compactly supported  test function  $f :\mathbb{R}^{k-1}\to [0,\infty)$ we define the {\it $k$\,--\,th order correlation function} of the sequence $\xn$ with respect to $f$ to be  
 \begin{equation} \label{kthcorrelationfunction}
 R_k(f,N) = \frac{1}{N}\sum_{\substack{i_1,\ldots,i_k \ls N\\ \text{ distinct}}}\hspace{-2mm} f\left(N(\!(x_{i_1}-x_{i_2})\!),N(\!(x_{i_1}-x_{i_3})\!),\ldots,N(\!(x_{i_1}-x_{i_k})\!)\right).
\end{equation}
Here $(\!(x)\!)$ denotes the signed distance of $x$ from the origin modulo $1$ (see Section \ref{notation} for a proper definition).

We say that the sequence $\xn$ has \textit{Poissonian $k$\,--\,th order correlations} if 
\begin{equation}\label{poisson}\lim_{N \to \infty}R_k(f,N) =  \int_{\mathbb{R}^{k-1}}f(x)\,\mathrm{d}x  \qquad \text{for any } f\in C_{c}(\mathbb{R}^{k-1}).  \end{equation}
A discussion on equivalent definitions of Poissonian correlations appearing in the literature can be found in Appendix A. There we explain that a sequence has Poissonian $k$\,--\,th order correlations if and only if \eqref{poisson} holds for any $f$ which is the characteristic function of some rectangle in $\mathbb{R}^{k-1}.$ \par 
Having Poissonian $k$\,--\,th order correlations can be viewed as a pseudo-randomness property of the sequence $\xn$ in the following sense: when $(Y_n)_{n\in\mathbb{N}}$ is a sequence of independent, uniformly distributed random variables in $[0,1]$, then almost surely, the sequence $(Y_n(\omega))_{n\in\mathbb{N}}$ has Poissonian $k$\,--\,th order correlations (see Appendix B for a proof).
\par
The $k$\,--\,th order correlations of a sequence are a local asymptotic statistics of the gaps of a sequence. Another closely related statistics is the asymptotic gap distribution of the sequence \cite{kr}. It is known that when a sequence has Poissonian correlations of all orders $k\gs2,$ then the asymptotic distribution of its gaps is also Poissonian; a proof can be found in \cite[Appendix A]{kr}. \par
The term \emph{Poissonian} comes from the fact that \eqref{poisson} is in accordance with the almost sure statistical behavior of gaps between random points coming from a Poisson process. Originally, the motivation for studying the gap statistics of point sequences came from theoretical physics, where the Berry\,--\,Tabor conjecture predicts that the spacings of the energy eigenvalues of generic integrable quantum systems follow the Poissonian model (see \cite{marklof2} for a survey in mathematical language). For some quantum systems the sequence of energy eigenvalues follows a simple arithmetic formula, but establishing the correlations to be Poissonian is usually a very substantial challenge (that often becomes more and more difficult as the order of the correlations increases). For some contributions concerning sequences of cognizable physical origin see for example \cite{heathbrown,my,sarnak}. However, the subject has also gained significant interest on a purely mathematical level, where the correlations of general sequences of arithmetic origin were studied; see for example \cite{all, bw,rs}. Most results only concern the case of correlations of order $k=2$ (known as \emph{pair correlations}), while correlations of higher order are combinatorially and analytically more difficult to study and often out of reach; among the relatively few results in that direction are \cite{rz} and \cite{ty}.

\par 
A topic of particular interest has been the connection of Poissonian pair correlations with uniform distribution properties. Recall that a sequence $\xn\subseteq [0,1]$ is called {\it uniformly distributed} if for any $0\ls a< b\ls 1$ we have 
\[ \lim_{N \to \infty} \frac{1}{N}\#\{  n\ls N: a\ls x_n \ls b\} = b-a.  \] 
To be more specific, it has been shown that when a sequence has Poissonian pair correlations it is also uniformly distributed. \\                 

\noindent {\bf Theorem A. } {\it Let $\xn\subseteq [0,1]$ be a sequence. If $\xn$ has Poissonian pair correlations, then $\xn$ is uniformly distributed. } \\

Theorem A was proved independently by Aistleitner, Lachmann and Pausinger \cite{alp} and by Grepstad and Larcher \cite{sigrid}. Additional proofs were given later by Steinerberger in \cite{steinerberger}
and, in a much more general setup, by Marklof \cite{marklof}. These four proofs are all essentially different. \par 
The authors of \cite{alp} also prove a quantitative version of Theorem A that is interesting in its own right. Before we present this  version of Theorem A, recall that a function $G:[0,1]\to \mathbb{R}$ is called   {\it an asymptotic distribution function} of a sequence $\xn \subseteq [0,1]$ if there exists a strictly increasing sequence of integers $(N_j)_{j\in\bN}$ such that 
\begin{equation} \label{G_definition} 
G(x) = \lim_{j \to \infty}\frac{1}{N_j}\#\{n \ls N_j : 0\ls x_n \ls x \}, \qquad 0\ls x \ls 1.   
\end{equation}
 By the Helly selection principle, every sequence $\xn \subseteq [0,1]$ has at least one asymptotic distribution function (see e.g. \cite[Ch. 1, Thm. 7.1]{kuipers}). Whenever \mbox{$G:[0,1]\to \bR$} is the unique asymptotic distribution function of the sequence $\xn$, that is, whenever
 \[ G(x) = \lim_{N \to \infty}\frac{1}{N }\#\{n \ls N  : 0\ls x_n \ls x \}    \]
holds for all $x\in [0,1]$, we will simply refer to it as the asymptotic distribution function of $\xn$. \par The quantitative version of Theorem A is the following statement. 
\vspace{2mm}

\noindent{\bf Theorem B. } {\it Assume that the sequence $\xn\subseteq [0,1]$ has the unique asymptotic distribution function $G:[0,1] \to \mathbb{R}.$ Assume also that there is a function \mbox{$F:[0,\infty)\rightarrow [0,\infty]$} such that 
\[ F(s) = \lim_{N \to \infty}\frac{1}{N} \# \Big\{k\neq \ell \ls N : \|x_k - x_\ell\| \ls \frac{s}{N}  \Big\}, \qquad s>0. \]
Then the following hold: 
\begin{itemize}
	\item[(i)] If $G$ is not absolutely continuous, then $F(s)=\infty$ for all $s>0$.
	\item[(ii)] If $G$ is absolutely continuous, then 
	\begin{equation*}\label{thm_b} \limsup_{s \to \infty}\frac{F(s)}{2s} \gs \int_0^1 g(x)^2\,\mathrm{d}x \end{equation*}
where $g$ is the density function of the corresponding measure (that is, $g = G'$ almost everywhere).
\end{itemize}}
 As a main result of this paper, we prove that Theorem A can be generalised to sequences with Poissonian correlations of order $k\gs 2$: having Poissonian correlations of any order $k\gs 2$ is a stronger property than uniform distribution.
 \begin{theorem}\label{main}
If the sequence $(x_n)_{n \in \mathbb{N}}\subseteq [0,1]$ has Poissonian $k$\,--\,correlations for some $k \gs 2$, then it is uniformly distributed.
\end{theorem}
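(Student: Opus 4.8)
The plan is to reduce Theorem \ref{main} to a single well-chosen family of test functions and then exploit convexity. Assume, for contradiction, that \xn is not uniformly distributed. By the Helly selection principle I would pass to a strictly increasing sequence $(N_j)$ along which the empirical measures $\tfrac{1}{N_j}\sum_{n\ls N_j}\delta_{x_n}$ converge weakly to a limit measure $\mu$ with distribution function $G$, where $G$ is not the identity. The whole argument then runs through the local counting function $F(t,s,N)=\#\{n\ls N:\|x_n-t\|\ls s/(2N)\}$ from the abstract, which records how many points fall in a window of length $s/N$ around $t$.

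First I would record the key algebraic identity. Taking the box test function $f=\one_{[-s/2,\,s/2]^{k-1}}$ in \eqref{kthcorrelationfunction}, grouping the sum according to the anchor index $i_1$, and counting the ordered $(k-1)$-tuples of distinct indices $i_2,\ldots,i_k$ with $\|x_{i_1}-x_{i_j}\|\ls s/(2N)$, one obtains
\[
R_k(f,N)=\frac1N\sum_{i_1\ls N}\bigl(F(x_{i_1},s,N)-1\bigr)\bigl(F(x_{i_1},s,N)-2\bigr)\cdots\bigl(F(x_{i_1},s,N)-k+1\bigr).
\]
Since (by the equivalence recalled after \eqref{poisson}) the Poissonian hypothesis can be tested on rectangles, it forces $R_k(f,N)\to s^{k-1}$ as $N\to\infty$, for every fixed $s>0$.

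Next I would prove the matching lower bound in terms of $G$. The crucial point is that the first moment of $F$ is exact and insensitive to the fine structure of the sequence: $\int_0^1 F(t,s,N)\,\mathrm dt=s$, and more locally $\int_I F(t,s,N)\,\mathrm dt\to s\,\mu(I)$ for every interval $I$. Partitioning $[0,1]$ into intervals of length $\delta$ and applying Jensen's inequality to $x\mapsto x^k$ on each piece gives $\int_0^1 F^k\,\mathrm dt\gs\sum_I\delta^{-(k-1)}\bigl(\int_I F\,\mathrm dt\bigr)^k$, whence, letting $N\to\infty$ along $(N_j)$ and then $\delta\to0$,
\[
\liminf_{j\to\infty}\int_0^1 F(t,s,N_j)^k\,\mathrm dt\;\gs\; s^k\int_0^1 g(t)^k\,\mathrm dt
\]
in the absolutely continuous case $g=G'$, together with a genuine blow-up when $G$ is not absolutely continuous (the analogue of part (i) of Theorem B). I would then connect this Lebesgue moment back to the anchored correlation, showing $R_k(f,N)=\tfrac1s\int_0^1 F(t,s,N)^k\,\mathrm dt\,(1+o(1))$ as $N\to\infty$ and $s\to\infty$; combining the two yields $\limsup_{s\to\infty}R_k(f,N_j)/s^{k-1}\gs\int_0^1 g^k$. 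Since $\int_0^1 g=1$, Jensen's inequality forces $\int_0^1 g^k\gs1$, with equality only when $g\equiv1$. But the Poissonian value is exactly $s^{k-1}$, so $\int_0^1 g^k\ls1$; hence $g\equiv1$ and $G$ is the identity, contradicting non-uniform distribution.

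The main obstacle is the calibration of constants in the passage $R_k(f,N)\approx\tfrac1s\int_0^1 F^k\,\mathrm dt$: the anchored correlation counts tuples lying within $s/(2N)$ of a distinguished point, whereas $\int F^k$ weights tuples by the length of their common window, and one must check that these agree to leading order as $s\to\infty$ while the falling-factorial lower-order terms and the non-distinct diagonal contributions remain negligible. Controlling the interchange of the limits $N\to\infty$ and $s\to\infty$, together with the fluctuations of $F$ about its local mean, is the technical heart. The convexity input itself is soft, since clumping of the sequence can only \emph{increase} the moments, so Jensen always supplies the estimate in the favourable direction.
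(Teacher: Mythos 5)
Your opening identity is correct and exact: with $f=\one_{[-s/2,s/2]^{k-1}}$ one indeed has $R_k(f,N)=\frac1N\sum_{i\ls N}(F(x_i,s,N)-1)\cdots(F(x_i,s,N)-(k-1))$, and your Jensen/martingale lower bound $\liminf_j\int_0^1F(t,s,N_j)^k\,\mathrm{d}t\gs s^k\int_0^1g^k$ is sound (it is essentially the paper's Propositions \ref{characterization_ck} and \ref{k_th_moment_step} adapted to the genuine $k$\,--\,th moment). The genuine gap is the pivot you yourself flag as ``the technical heart'': the claim $R_k(f,N)=\frac1s\int_0^1F(t,s,N)^k\,\mathrm{d}t\,(1+o(1))$. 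Expanding $F^k=\sum_j c_{k,j}F(F-1)\cdots(F-(j-1))$ and integrating, the difference between $\int_0^1 F^k\,\mathrm{d}t$ and the top falling-factorial term is a Stirling combination of \emph{lower-order} correlation quantities $R_m(\cdot,N)$ with $m<k$; your hypothesis is Poissonian correlations at order $k$ \emph{only}, and nothing in your argument controls these lower-order terms. That they are $\mathcal{O}(s^{m-1})$ after $\limsup_N$ is a theorem, not an observation: it is exactly the paper's Proposition \ref{HaukePrinciple}, the counting inequality $R_m(\frac s3,N)\ls\frac6sR_{m+1}(s,N)$ obtained by partitioning $[0,1]$ into three shifted grids of mesh $s/N$ and applying a H\"older-type inequality to the occupancy numbers. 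Without this (or an equivalent), a priori a sequence could have Poissonian $k$\,--\,th correlations while its pair correlations blow up at fixed $s$, and your error terms swamp the main term.

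There is a second, quantitative defect in the same step: even granting control of lower orders, the mismatch between the anchored sum $\frac1N\sum_iF(x_i)^{k-1}$ (empirical-measure weights at the points $x_i$) and the Lebesgue moment $\frac1s\int_0^1F^k\,\mathrm{d}t$ cannot be handled by the natural crude comparison. Writing $\int_0^1F^k\,\mathrm{d}t=\sum_n\int_{B(x_n,s/2N)}F^{k-1}\,\mathrm{d}t$ and bounding $F(t)$ on the window by a count at doubled scale loses a factor $2^{k-1}$, which is fatal: your contradiction requires the \emph{sharp} constant, since $\int_0^1g^k$ may exceed $1$ by an arbitrarily small margin. The paper sidesteps this with exact identities rather than asymptotic calibration: Lemma \ref{lem12} shows $\lambda(\bigcap_jB_{i_j})$ equals the tent-type test function $g_s^{(k)}$ evaluated at the signed differences, so $\int_0^1F^k\,\mathrm{d}t$ \emph{is} an exact linear combination of correlation averages $R_m(g_s^{(m)},N)$ (this is the content of Theorem \ref{connection_to_higher_moments}, whose part (ii) again leans on Proposition \ref{HaukePrinciple}). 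For comparison, the paper's actual proof of Theorem \ref{main} is leaner than your scheme: it needs only the second moment of $F$, a two-interval Cauchy--Schwarz step, the exact identity $\int_0^1F^2\,\mathrm{d}t=\int_0^sR_2^*(\sigma,N)\,\mathrm{d}\sigma$ (Lemma \ref{lemma11}), and the H\"older bound $R_2^*(s,N)\ls R_k^*(s,N)^{1/(k-1)}$ together with $R_k^*(s,N)=(2s)^{k-1}+\mathcal{O}(s^{k-2})$ --- but that last estimate (Corollary \ref{R_k_equiv_R_k*}) again rests on Propositions \ref{HaukePrinciple} and \ref{star_lower_orders}. So the missing counting inequality is not a technicality you can route around; it is the load-bearing idea your proposal would have to reconstruct.
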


In fact, with a little more effort than in the proof of Theorem \ref{main} we are able to prove a stronger result that can be viewed as a generalisation of Theorem B in the context of $k$\,--\,th order correlations.  In what follows, for $s>0$ we shall write $R_k(s,N) = R_k(f_s,N)$ where $f_s$ is the test function $f_s = \mathds{1}_{[-s,s]^{k-1}}$.
\begin{theorem}\label{main2}
        Let $(x_n)_{n \in \mathbb{N}} \subseteq [0,1]$, $G:[0,1]\to\bR $ be an asymptotic distribution function of $\xn$ and $(N_j)_{j\in\bN}$ be a sequence as in \eqref{G_definition}. Then the following hold:
\begin{itemize}
	\item[(i)] If $G$ is not absolutely continuous, then $\lim\limits_{j\to\infty}R_k(s,N_j)=\infty$ for all $s>0$.
	\item[(ii)] If $G$ is absolutely continuous, then  
	\begin{equation}\label{k_thmoment}\limsup_{s \to \infty}\frac{\limsup\limits_{j\to\infty} R_k(s, N_j)}{(2s)^{k-1}} \gs \int_{0}^{1} g(x)^k \,\mathrm{d}x, \end{equation}
where $g$ is the density function of the corresponding measure. 
\end{itemize}
 \end{theorem}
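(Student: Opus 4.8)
The plan is to reduce everything to the local neighbour count
\[
B_i(s,N)=\#\{\,n\ls N:\ n\neq i,\ N\|x_i-x_n\|\ls s\,\},
\]
the number of points within distance $s/N$ of $x_i$. Unfolding the ``distinct indices'' condition shows the exact identity
\[
R_k(s,N)=\frac1N\sum_{i\ls N}B_i(s,N)\big(B_i(s,N)-1\big)\cdots\big(B_i(s,N)-k+2\big),
\]
so $R_k(s,N)$ is an average of falling factorials of these counts. More generally, for a product test function $\prod_{t=2}^{k}\mathds{1}_{[-s_t,s_t]}$ I would introduce the associated correlation $\mathcal R(\vec s;N)$ and observe that, up to terms of lower order in the $s_t$ (produced by coinciding indices), $\mathcal R(\vec s;N)=\frac1N\sum_i\prod_{t=2}^k B_i(s_t,N)+(\text{lower order})$.

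For part (i) the crude bound suffices: keeping only those $k$-tuples lying in a common interval of length $s/N_j$ and writing $c_\ell$ for the count in the $\ell$-th such interval gives $R_k(s,N_j)\gs\frac1{N_j}\sum_\ell c_\ell\cdots(c_\ell-k+1)$. Grouping these micro-intervals inside a macroscopic partition of mesh $1/M$ and applying convexity of $t\mapsto t^k$ yields, for each fixed $M$,
\[
\liminf_{j\to\infty}R_k(s,N_j)\ \gs\ s^{k-1}\,M^{k-1}\sum_{m}G(I_m)^k .
\]
Since $M^{k-1}\sum_m G(I_m)^k$ diverges as $M\to\infty$ precisely when $G$ is not absolutely continuous, letting $M\to\infty$ forces $R_k(s,N_j)\to\infty$.

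The heart is part (ii), where the normalisation $(2s)^{k-1}$ is sharp: a single-scale convexity estimate only produces the constant $2^{-(k-1)}\!\int g^k$, because the window has width $2s/N$ while ``same interval'' sees only width $s/N$. To recover the correct constant I would average over scale, generalising the argument behind Theorem B. Using $\int_0^S\mathds{1}_{[0,s]}(Nd)\,\mathrm ds=(S-Nd)_+$ and integrating $\mathcal R$ over a whole cube gives
\[
\int_{[0,S]^{k-1}}\mathcal R(\vec s;N)\,\mathrm d\vec s
=\frac1N\sum_i\Big(\sum_{n\ls N}\big(S-N\|x_i-x_n\|\big)_+\Big)^{k-1}+(\text{lower order}).
\]
The inner tent $(S-N|u|)_+$ is the autoconvolution of the box $\mathds{1}_{[-S/2N,S/2N]}$, so the relevant second moment is a genuine $L^2$-norm and the convexity estimate is lossless. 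Localising to the intervals $I_m$ and combining Jensen over $i\in I_m$ with the sharp local second-moment bound $\frac1{n_m}\sum_{i\in I_m}\sum_n(S-N\|x_i-x_n\|)_+\gs(1-o(1))\,\bar g_m S^2$ leads, after $N=N_j\to\infty$ and $M\to\infty$, to
\[
\int_{[0,S]^{k-1}}\mathcal R(\vec s;N_j)\,\mathrm d\vec s\ \gs\ (1-o(1))\,S^{2(k-1)}\int_0^1 g^k .
\]

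Finally I would transfer this to the diagonal quantity $R_k$ by Hölder's inequality with all exponents equal to $k-1$: from $\mathcal R(\vec s;N)\approx\frac1N\sum_i\prod_t B_i(s_t,N)$ one gets $\mathcal R(\vec s;N)\ls\prod_{t}\big(\frac1N\sum_i B_i(s_t,N)^{k-1}\big)^{1/(k-1)}=\prod_t R_k(s_t,N)^{1/(k-1)}(1+o(1))$. If the left side of \eqref{k_thmoment} were some $L<\int g^k$, then $R_k(s,N_j)\ls(L+\varepsilon)(2s)^{k-1}$ for all large $s$ along the subsequence; inserting this into the Hölder bound and integrating over the cube gives $\int_{[0,S]^{k-1}}\mathcal R\ls(L+\varepsilon)S^{2(k-1)}(1+o(1))$, the constants matching exactly because $\int_0^S 2s\,\mathrm ds=S^2$. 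Comparing with the lower bound forces $\int g^k\ls L+\varepsilon$, a contradiction. The hard part will be exactly this constant-matching — it is why the cube-integration together with the Hölder coupling of off-diagonal to diagonal scales, rather than a one-parameter integration, is indispensable for $k\gs 3$ — together with justifying the interchange of $\limsup_j$ with the integral in $\vec s$ (a reverse Fatou / uniform-domination argument) and controlling the various lower-order error terms.
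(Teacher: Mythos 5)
Your outline reproduces, in different packaging, essentially the paper's own proof. Your cube integral $\int_{[0,S]^{k-1}}\mathcal R(\vec s;N)\,\mathrm d\vec s$ is exactly the paper's averaged correlation $C_k^*(S,N)$: Proposition \ref{ck_prop} computes it as $N^{k-2}\sum_{i_1,\ldots,i_k}\prod_p\lambda_N(S;i_1,i_p)$, which is your $\frac1N\sum_i\big(\sum_n(S-N\|x_i-x_n\|)_+\big)^{k-1}$ with coincidences included; your tent/autoconvolution remark is Lemma \ref{lemma11} together with $\int_0^1F(t,S,N)^2\,\mathrm dt\gs S^2$; the lossless lifting to the $(k-1)$\,--\,st power is the H\"older step of Proposition \ref{characterization_ck}; your localisation-plus-Jensen and the limit $M^{k-1}\sum_mG(I_m)^k\to\int_0^1g^k$ correspond to Proposition \ref{k_th_moment_step} (which the paper proves by rescaling the subsequence inside each interval rather than by a local second-moment bound --- same substance) followed by the martingale convergence step; and your final H\"older coupling of off-diagonal to diagonal scales plus the reverse Fatou contradiction is Proposition \ref{R_k^*_hölder_prop}(ii) and the argument around \eqref{applied_hölder_thm2}--\eqref{int2} almost verbatim. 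In part (i) you bound $R_k$ directly via micro-interval counts, which is arguably more direct than the paper's route through $C_k^*$, $R_k^*$ and the transfer inequality \eqref{new_ineq}; but two repairs are needed there: convexity of $t\mapsto t^k$ does not lower-bound the falling factorial (use instead $c(c-1)\cdots(c-k+1)\gs\big(\{c-k+1\}^+\big)^k$ and Jensen for the convex function $t\mapsto(\{t-k+1\}^+)^k$), and your ``diverges precisely when $G$ is not absolutely continuous'' is false as an equivalence ($G$ absolutely continuous with $g\notin L^k$ also gives divergence), though only the implication you actually use is true.

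The one place where your sketch genuinely underestimates the difficulty is the ``(lower order)'' bookkeeping in the constant-sharp contradiction. The discrepancies between $R_k$, the with-repeats quantity $\frac1N\sum_i\prod_tB_i(s_t,N)$, and $C_k^*$ consist of lower-order correlations $R_m$, $m<k$, and these are \emph{not} automatically $o(s^{k-1})$: the crude combinatorial comparisons available a priori (Proposition \ref{star_lower_orders}, or $R_m(s,N)\ls R_k(s,N)+k^m$ as used for \eqref{new_ineq}) bound them by the full main term times Stirling-number constants, which multiplies your upper bound by a constant strictly larger than $1$ and destroys exactly the constant matching on which your contradiction rests. The paper resolves this with Proposition \ref{HaukePrinciple}, the inequality $R_m(\tfrac s3,N)\ls\frac6sR_{m+1}(s,N)$ for $s$ large (proved via three shifted partitions), whose iteration gives Corollary \ref{R_k_equiv_R_k*} and makes all error terms $O(s^{k-2})$ under the hypothesis $R_k(s,N_j)\ls(L+\varepsilon)(2s)^{k-1}$. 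Within your framework one can instead close the gap by a bootstrap: split the indices according to whether $B_i(s,N)\ls T_\eta$, use $B_i^{k-1}\ls(1+\eta)^{k-2}B_i(B_i-1)\cdots(B_i-k+2)$ on the large part and $\frac1N\sum_iB_i\ls\big(\frac1N\sum_iB_i^{k-1}\big)^{1/(k-1)}$ on the small part, then solve the resulting self-referential inequality. But some such device must be supplied --- ``lower order'' is an assertion, not an observation, and it is the technical heart of the transfer from the starred to the unstarred correlations. Your flagged reverse-Fatou issue, by contrast, is routine: monotonicity in $\sigma$ plus the contradiction hypothesis dominate $R_k^*(\sigma,N_j)^{1/(k-1)}$ by a function of the form $C\sigma+C'$ uniformly in large $j$, exactly as in the paper.
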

 To see why Theorem \ref{main2} is indeed a stronger version of Theorem \ref{main}, observe that if $\xn$ has Poissonian $k$\,--\,th correlations but is not uniformly distributed, it will have an asymptotic distribution function $G$ whose density function $g$  is not constantly equal to $1$ (the existence of $g$ follows from (i) of Theorem \ref{main2}). Therefore $\int_0^1g(x)^k\mathrm{d}x>1$ and \eqref{k_thmoment} leads to a contradiction. \par 
The reader might spot two subtle differences between Theorem B and its generalisation, Theorem \ref{main2}. First, in Theorem \ref{main2} we assume that $G$ is an asymptotic distribution function of $\xn$, not necessarily unique as in Theorem B. Second, we do not require that the limit $\lim_{N \to \infty} R_k(s,N)$ exists, but instead, we work with the term $\limsup_{j \to \infty} R_k(s,N_j)$. The additional assumptions  in Theorem B are not essential, and the proof in \cite{alp} can be easily modified under the slightly weaker hypotheses of Theorem \ref{main2}. These minor modifications also make Theorem B an actually stronger result than Theorem A. \\

In the proof of both Theorems \ref{main} and \ref{main2}, we shall make  use of several variants of the correlation function $R_k(f,N)$ that was defined in \eqref{kthcorrelationfunction}. To be more specific, given some scales $s_1,s_2,\ldots, s_{k-1}>0$ we define the correlation function $R_k(s_1,\ldots,s_{k-1},N)$ by 
\begin{equation} \label{kthcorrelationfunction_old_def}
R_k(s_1,\ldots,s_{k-1},N) = \frac{1}{N}\#\left\{ \parbox{7em}{$ i_1,\ldots,i_k \leqslant N $ \\  $i_j \neq i_\ell \,\forall j \neq \ell$}\hspace{-5mm} : 
 \,  \|x_{i_1} - x_{i_{r+1}}\|\ls \frac{s_r}{N}\,\,\, (1\ls r <k)  \right\}  
\end{equation}
(where $\|x\|$ denotes the distance of $x\in\mathbb{R}$ to its nearest integer). Since we can write $R_k(s_1,\ldots,s_{k-1},N)$  as $R_k(\one_{B},N)$ where $B$ is the rectangle $[-s_1,s_1]\times \ldots \times [-s_{k-1},s_{k-1}],$  it follows from \eqref{poisson} and an approximation argument  that sequences with Poissonian $k$\,--\,th correlations  also satisfy
 \begin{equation*}  
     \lim_{N\to\infty}  R_k(s_1,\ldots,s_{k-1},N) = (2s_1) \cdots (2s_{k-1}) \quad \text{ for all } s_1,\ldots, s_{k-1}>0 
 \end{equation*}
(see also Appendix A for a discussion). \par Furthermore, we define the correlation function $R_k^*(s_1,\ldots,s_{k-1},N)$ by
\begin{align} \label{rkstardef}
R_k^*(s_1,\ldots, s_{k-1},N) = \frac{1}{N}\#\left\{ i_1,\ldots,i_k \ls N:
		 \|x_{i_1} - x_{i_{r+1}}\|\ls \frac{s_r}{N}\,\, (1\ls r<k) \right\}  .
		\end{align}
That is, in the definition of $R_k^*(s_1,\ldots,s_{k-1},N)$ we allow indices to be equal. We will also make use of the appropriate averages of $R_k$ and $R_k^*$ defined as   
\begin{equation*}
C_k(s_1,\ldots,s_{k-1},N) = \mathop{\iint\ldots\int}_{B(s_1,\ldots,s_{k-1})}\hspace{-1mm}R_k(\sigma_1,\ldots,\sigma_{k-1},N)\, \mathrm{d}\sigma_1 \mathrm{d}\sigma_2\ldots \mathrm{d}\sigma_{k-1} \end{equation*}
and
\begin{equation*} 
C_k^*(s_1,\ldots,s_{k-1},N) = \mathop{\iint\ldots\int}_{B(s_1,\ldots,s_{k-1})}\hspace{-1mm}R_k^*(\sigma_1,\ldots,\sigma_{k-1},N)\, \mathrm{d}\sigma_1 \mathrm{d}\sigma_2\ldots \mathrm{d}\sigma_{k-1} , \end{equation*}
where $B(s_1,\ldots,s_{k-1})$ denotes the rectangle $[0,s_1]\times [0,s_2] \times \ldots \times [0,s_{k-1}].$
Finally, when the scales $s_1,\ldots, s_{k-1}$ are all equal to $s>0$ we write for simplicity 
\begin{equation*} \label{all_inclusive} \begin{gathered}
R_k(s,N)   = R_k(s,\ldots,s,N),\qquad R_k^*(s,N)  =  R_k^*(s,\ldots,s,N), \\
C_k(s,N)   = C_k(s,\ldots,s,N), \qquad C_k^*(s,N)  =  C_k^*(s,\ldots,s,N). \end{gathered} \end{equation*}
We note that the definition of $R_k(s,N)$ here agrees with the definition given right before Theorem \ref{main2}.

\subsection{Some consequences of the main results} In the course of the proofs of the stated theorems, we examine the relations between the correlation functions of different orders for a given sequence. These relations enable us to prove that when a sequence $\xn$ fails to have Poissonian $k$\,--\,th order correlations in a very strong sense, then it will also not have Poissonian correlations of any higher order.

\begin{theorem} \label{thm3}
Let $\xn\subseteq [0,1]$ be a sequence and assume that for some scales $s_1,\ldots, s_{k-1}>0$ the $k$\,--\,th order correlation function $R_k(s_1,\ldots,s_{k-1},N)$ satisfies
\[\limsup_{N\to\infty}R_k(s_1,\ldots,s_{k-1},N) = \infty.  \]
Then the sequence $\xn$ does not have Poissonian $p$\,--\,th order correlations for any $p\gs k.$
\end{theorem}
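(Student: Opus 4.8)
The plan is to reduce everything to the \emph{equal-scale} correlation functions, for which the with-repetition counts $R_q^*(s,N)$ admit a clean moment formula, and then to exploit convexity (power-mean) inequalities between moments of different orders. As a first step I would reduce to a single scale: setting $s=\max_{1\ls r<k}s_r$, the inclusion of events shows $R_k(s_1,\ldots,s_{k-1},N)\ls R_k(s,N)$, so the hypothesis already gives $\limsup_{N\to\infty}R_k(s,N)=\infty$. Fixing a subsequence $(N_j)$ along which $R_k(s,N_j)\to\infty$ and using $R_k(s,N)\ls R_k^*(s,N)$, I get $R_k^*(s,N_j)\to\infty$ as well.

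The observation that drives the argument is that, for equal scales, $R_q^*$ is a pure moment. Writing $M_i=\#\{n\ls N:\|x_i-x_n\|\ls s/N\}$ for the local count around $x_i$, expanding the definitions gives, for every $q\gs 2$,
\[
R_q^*(s,N)=\frac1N\sum_{i\ls N}M_i^{\,q-1},\qquad R_q(s,N)=\frac1N\sum_{i\ls N}(M_i-1)(M_i-2)\cdots(M_i-q+1),
\]
the second sum counting the ordered choices of $q-1$ distinct indices (distinct from $i$) inside the ball of radius $s/N$ about $x_i$. Thus $R_q^*(s,N)$ is exactly the unnormalised $(q-1)$-st moment of the family $(M_i)_i$, and since $p\gs k$ Jensen's inequality applied to the convex maps $t\mapsto t^{(p-1)/(k-1)}$ and $t\mapsto t^{(p-1)/(p-2)}$ yields
\[
R_p^*(s,N)\gs R_k^*(s,N)^{(p-1)/(k-1)},\qquad R_p^*(s,N)\gs R_{p-1}^*(s,N)^{(p-1)/(p-2)}.
\]
Together with $R_{p-1}^*(s,N)\gs R_k^*(s,N)^{(p-2)/(k-1)}$, these force $R_{p-1}^*(s,N_j)\to\infty$ and $R_p^*(s,N_j)/R_{p-1}^*(s,N_j)\to\infty$ along the subsequence.

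The case $p=k$ is then immediate: $R_k(s,N_j)\to\infty$ already shows $R_k(s,N)$ does not converge to the Poissonian value $(2s)^{k-1}$, so the sequence has no Poissonian $k$\,--\,th order correlations. For $p>k$ the remaining and, I expect, most delicate point is to transfer the blow-up of the with-repetition count $R_p^*$ to the genuine distinct-index count $R_p$, since the moment identity and the power-mean inequalities only control $R_p^*$ directly. Here I would compare the two summands termwise: because $m^{p-1}$ and $(m-1)(m-2)\cdots(m-p+1)$ are both monic of degree $p-1$ in $m$, their difference has degree at most $p-2$, whence $|m^{p-1}-(m-1)\cdots(m-p+1)|\ls C_p(m^{p-2}+1)$ with $C_p$ depending only on $p$. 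Summing over $i$ gives $|R_p^*(s,N)-R_p(s,N)|\ls C_p\bigl(R_{p-1}^*(s,N)+1\bigr)$.

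Finally I would combine this diagonal estimate with the power-mean bound. Writing $u=R_{p-1}^*(s,N_j)\to\infty$, the inequalities above give
\[
R_p(s,N_j)\gs R_p^*(s,N_j)-C_p\bigl(R_{p-1}^*(s,N_j)+1\bigr)\gs u^{(p-1)/(p-2)}-C_p u-C_p=u\bigl(u^{1/(p-2)}-C_p\bigr)-C_p,
\]
which tends to $\infty$ since $u^{1/(p-2)}\to\infty$ dominates the constant $C_p$. Hence $\limsup_{N\to\infty}R_p(s,N)=\infty$, so $R_p(s,N)$ cannot converge to $(2s)^{p-1}$, and the sequence has no Poissonian $p$\,--\,th order correlations. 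The only real bookkeeping is checking that the polynomial remainder $R_p^*-R_p$ is of strictly lower order than $R_p^*$ itself, which is precisely what the two power-mean inequalities secure.
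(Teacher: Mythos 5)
Your proof is correct, and it takes a genuinely different route from the paper's. The paper deduces Theorem \ref{thm3} in two lines from Proposition \ref{HaukePrinciple}, whose proof partitions $[0,1)$ into intervals of length roughly $s/N$ (together with two shifted copies of that partition) and applies a H\"older-type inequality to the occupancy counts, yielding the scale-to-scale bound $R_m(\tfrac{s}{3},N)\ls \tfrac{6}{s}R_{m+1}(s,N)$, valid only for $s$ above a threshold $s_m$ and $N\gs N_0(s,m)$; iterating this transfers the blow-up of $R_k$ to a blow-up of $R_p$ at an enlarged scale of order $3^{p-k}\max_r s_r$. You instead work with the per-point counts $M_i$ and the two exact identities $R_q^*(s,N)=\tfrac1N\sum_{i\ls N}M_i^{q-1}$ and $R_q(s,N)=\tfrac1N\sum_{i\ls N}(M_i-1)\cdots(M_i-q+1)$ --- the first is precisely the paper's formula \eqref{R_k_easy_form}, while the second makes exact what the paper handles through Stirling-number bookkeeping in Proposition \ref{star_lower_orders} and inequality \eqref{new_ineq} --- and then run Jensen between moments of different orders, controlling the remainder $R_p^*-R_p$ by a moment of one lower order. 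All steps check out: both identities are exact (since $n=i$ is counted in $M_i$, the falling factorial correctly vanishes when fewer than $q-1$ distinct neighbours are available), the exponents $(p-1)/(k-1)$, $(p-1)/(p-2)$ and $(p-2)/(k-1)$ are all $\gs 1$ in the regime $p\gs k+1$ where you invoke them (the case $p=k$ being handled separately), and the final estimate $u^{(p-1)/(p-2)}-C_pu-C_p\to\infty$ is sound. What your route buys: divergence of $R_p$ at the \emph{original} scale $s=\max_r s_r$, with no largeness threshold on $s$ --- a conclusion slightly stronger than the paper's, and worth noting in light of the paper's remark that the pointwise inequality of Proposition \ref{HaukePrinciple} genuinely fails for small $s$ (there is no conflict, since you prove an asymptotic transfer of divergence along a subsequence rather than a pointwise inequality at fixed $N$). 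What the paper's route buys: Proposition \ref{HaukePrinciple} is a uniform quantitative inequality for arbitrary sequences that the paper reuses elsewhere (Corollary \ref{R_k_equiv_R_k*}, part (ii) of Theorem \ref{connection_to_higher_moments}), so Theorem \ref{thm3} drops out as an immediate corollary of machinery needed anyway.
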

Theorem \ref{thm3} is one of the key ingredients we will use to derive new results on the {\it metric theory} of Poissonian correlations:  an increasing sequence $\mathcal{A}=(a_n)_{n\in\bN}\subseteq \bN$ is fixed, and we study the Lebesgue measure of $x\in [0,1]$ such that the sequence $(a_n x)_{n\in\bN}$  has Poissonian correlations.  As with the correlations of fixed sequences, most of the metric results  to date are concerned with the $k=2$ case. \par In the metric setup, the authors of \cite{all} have established several statements on the connection of Poissonian correlations with the notion of {\it additive energy}.  Recall that the additive energy of a finite set $A$ is defined as  
\[E(A) = \#\{a,b,c,d\in A : a+b = c+d \}  \]
(see \cite[Chapter 2]{taovu} for more details). Writing $\mathcal{A}_N = (a_n)_{n\ls N}$ for the set of the first $N$ elements of $\mathcal{A},$ Aistleitner, Larcher \& Lewko \cite{all}
proved that whenever $E(\mathcal{A}_N)= \mathcal{O}(N^{3-\varepsilon}), N\to\infty $ for some $\varepsilon>0,$ then the sequence $(a_nx)_{n\in\bN}$ has Poissonian pair correlations for almost all $x\in[0,1].$ Examining to what extent this bound on the additive energy is optimal, Bourgain \cite[Appendix]{all} showed that if $E(\mathcal{A}_N) = \Omega(N^3), N\to\infty$ then $(a_nx)_{n\in\bN}$ does not have Poissonian pair correlations for all $x$ in a subset of $[0,1]$ of positive Lebesgue measure, while on the other hand there exists a sequence $\mathcal{A} = (a_n) \subseteq \bN$ with $E(\mathcal{A}_N) = o(N^3), N\to\infty$ such that $(a_n x)_{n\in\bN}$ does not have Poissonian pair correlations for almost all $x\in [0,1].$ \par In the present paper, we are able to deduce an analogue of the first\,--\,mentioned result of Bourgain, proving that when the additive energy of $\mathcal{A}$ is of maximal order of magnitude, then the sequence $(a_n x)_{n\in\bN}$ fails to have Poissonian triple correlations (i.e. of order $k=3$) for Lebesgue almost all $x$. Further, as mentioned previously, we use Theorem \ref{thm3} to generalise the second\,--\,mentioned result of Bourgain for $k$\,--\,th order correlations.

\begin{theorem}\label{ap_thm}(i) There exists a set $\mathcal{A} = (a_n)_{n=1}^{\infty} \subseteq \mathbb{N}$ with additive energy $E(\mathcal{A}_N) = o(N^3), N \to \infty$ such that for Lebesgue almost all $x\in [0,1]$ the sequence $(a_nx)_{n=1}^{\infty}$ does not have Poissonian correlations of any order $k\gs 2.$ \par
(ii) Let $\mathcal{A} = (a_n x)_{n\in\bN} \subseteq \bN$ be a sequence such that $E(\mathcal{A}_N) = \Omega(N^3)$ as  $N\to\infty$. Then for all $x$ in a set of positive Lebesgue measure, $ (a_n x)_{n\in\bN}$ does not have Poissonian  triple correlations. 
\end{theorem}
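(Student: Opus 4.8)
The plan is to treat both parts by reducing statements about $k$\,--\,th order correlations of the dilated sequence $(a_n x)_{n\in\bN}$ to the behaviour of its pair correlations, exploiting the relations between correlation functions of consecutive orders that underlie Theorem \ref{thm3}. The elementary inequality I would isolate is the following consequence of Cauchy\,--\,Schwarz applied to the sequence $y_n = a_n x$. Fix $x$ and a scale $s>0$, and write $S_i = S_i(x,N) = \#\{j\neq i : \|(a_i-a_j)x\|\ls s/N\}$ for the number of indices lying within $s/N$ of the $i$\,--\,th point. Then $N R_2(s,N) = \sum_{i\ls N} S_i$ while $N R_3(s,s,N) = \sum_{i\ls N} S_i(S_i-1)$, so that
\[ N R_3(s,s,N) = \sum_{i\ls N} S_i^2 - \sum_{i\ls N} S_i \gs \frac{1}{N}\Big(\sum_{i\ls N} S_i\Big)^2 - N R_2(s,N) = N\big(R_2(s,N)^2 - R_2(s,N)\big). \]
This yields the pointwise bound $R_3(s,s,N) \gs R_2(s,N)\big(R_2(s,N)-1\big)$ for $(a_n x)$: whenever the pair correlations are large, so are the triple correlations. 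This is the bridge to Bourgain's pair\,--\,correlation results.

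For part (i) I would start from Bourgain's construction (in the appendix of \cite{all}) of a set $\mathcal A$ with $E(\mathcal A_N)=o(N^3)$ for which $(a_n x)$ fails to have Poissonian pair correlations for almost every $x$. The point I would need to verify is that this failure is of the strong type $\limsup_{N\to\infty} R_2(s,N) = \infty$ for almost every $x$ at a fixed scale $s$ — which is exactly what the clustering built into that construction produces along a sparse sequence of scales. Granting this, Theorem \ref{thm3} applied with $k=2$ (a single scale $s_1=s$) immediately gives that for almost every $x$ the sequence $(a_nx)$ has no Poissonian $p$\,--\,th order correlations for any $p\gs 2$, which is the assertion of (i).

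For part (ii) I would adapt Bourgain's second\,--\,moment argument to the regime $E(\mathcal A_N)=\Omega(N^3)$. One has $\int_0^1 R_2(s,N,x)\,\mathrm{d}x \to 2s$, while the off\,--\,diagonal contribution to $\int_0^1 R_2(s,N,x)^2\,\mathrm{d}x$ coming from the quadruples with $a_i-a_j=\pm(a_k-a_l)$ is of size $\asymp sE(\mathcal A_N)/N^3$. Thus $E(\mathcal A_N)=\Omega(N^3)$ forces the variance of $R_2(s,\cdot)$ to stay bounded away from $0$ along a subsequence of $N$. Since the extra second\,--\,moment mass is created by clustering, the exceptional behaviour is in the \emph{large} direction; I would extract a set of positive Lebesgue measure on which $R_2(s,N,x)$ exceeds $2s$ by a definite margin infinitely often, and feed this into $R_3 \gs R_2(R_2-1)$, calibrating $s$ so that the guaranteed lower bound on $R_2$ makes $R_2(R_2-1)$ exceed the Poissonian value $(2s)^2$. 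This yields $\limsup_N R_3(s,s,N,x) > (2s)^2$ on a set of positive measure, i.e. the failure of Poissonian triple correlations there.

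The main obstacle I anticipate is precisely the extraction step in part (ii): passing from a lower bound on the variance (an $L^2$ statement about $R_2$) to a pointwise lower bound on $R_2$ on a set of positive measure. A variance bound alone is \emph{not} enough, since the excess second moment could a priori be carried by sets whose measure tends to $0$, a scenario fully compatible with almost\,--\,everywhere convergence of $R_2$ to $2s$; ruling this out requires exploiting the arithmetic structure forced by $E(\mathcal A_N)=\Omega(N^3)$ (the concentration of the difference set), as in Bourgain's original argument, so that the exceptional sets at different scales do not disperse. Closely tied to this is the calibration of the scale $s$: it must be taken small enough that the variance $\asymp sE(\mathcal A_N)/N^3$ dominates $(2s)^2$, yet the same choice must leave the lower bound on $R_2$ large enough that the reduction inequality $R_3 \gs R_2(R_2-1)$ actually crosses the threshold $(2s)^2$. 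Balancing these two competing requirements is the delicate technical core of the proof.
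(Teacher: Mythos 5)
Your part (i) is exactly the paper's argument: Bourgain's construction in the appendix of \cite{all} already delivers the strong failure $\limsup_{N\to\infty}R_2(1,N)=\infty$ for almost every $x$ (this is how the paper quotes it, so the verification you flag is indeed available), and Theorem \ref{thm3} then kills Poissonian correlations of every order $p\gs 2$. Your reduction inequality $R_3(s,s,N)\gs R_2(s,N)\big(R_2(s,N)-1\big)$ is also correct; it is the unstarred analogue of Proposition \ref{R_k^*_hölder_prop}(i) with $k=3$.

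Part (ii), however, has a genuine gap, and it is worse than the ``delicate calibration'' you describe: the two requirements you want to balance are in direct conflict. Your variance lower bound $\asymp sE(\mathcal{A}_N)/N^3 - 4s^2$ is positive only in the regime of \emph{small} $s$ (the energy contribution is linear in $s$, the squared mean quadratic). But at small $s$ the Poissonian value of $R_2$ is $2s<1$, and for $R_2(R_2-1)$ to exceed $4s^2$ you need $R_2\gs 2s+\delta$ with $\delta^2+\delta(4s-1)>2s$, i.e.\ a margin $\delta>1+\mathcal{O}(s)$ --- the pair correlation must exceed its Poissonian value by more than $1$, not by ``a definite margin.'' A variance of order $s\ll 1$ is fully consistent with that exceedance occurring only on sets of measure $\mathcal{O}(s)$ that shrink with $N$, or with all the excess second moment carried at enormous heights on vanishing measure; as you yourself note, moment information cannot pin down a \emph{fixed} positive-measure set, and the $\limsup$ over $N$ compounds this since the exceptional sets may disperse. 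Deferring this step to ``the arithmetic structure \ldots as in Bourgain's original argument'' is not a repair: that structure theory is precisely the hard content, and nothing in your outline supplies it.

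The paper sidesteps the extraction problem entirely by running a \emph{first}-moment argument on $R_3$ itself rather than a second-moment argument on $R_2$. By a result of Pohoata and Roche-Newton \cite[Theorem 6.1]{pr}, $E(\mathcal{A}_N)\gs\kappa_1 N^3$ forces $T(\mathcal{A}_N)\gs\kappa_2 N^2$ nontrivial three-term progressions in $\mathcal{A}_N$. A progression $(i,j,k)$ with common difference $d$ contributes to $R_3(s,N,x)$ precisely when $\|dx\|\ls s/N$, an event of measure $2s/N$ independent of $d$, whence $\int_0^1 R_3(s,N,x)\,\mathrm{d}x \gs 2s\,T(\mathcal{A}_N)/N^2 \gs 2\kappa_2 s$ for infinitely many $N$. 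Since this bound is linear in $s$, it beats the Poissonian value $4s^2$ once $s$ is small, and the reverse Fatou lemma converts the excess of the mean into a set of positive measure on which $\limsup_{N}R_3(s,N,x)>4s^2$. The moral difference: the arithmetic input (three-term progressions) is injected directly at the level of triple correlations, where its mass is linear in $s$, so no pointwise lower bound on $R_2$ is ever needed --- whereas your route funnels the structure through $R_2$ and then through an inequality that, at the small scales the variance computation forces on you, points the wrong way.
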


\subsection{Poissonian Correlations and the number of points in small intervals}

As a final result of this paper, we seek to exhibit a connection of the property of Poissonian k\,--\,th correlations with the number of elements of a sequence in sufficiently small intervals. More formally, given $s>0$ and $N\gs 1$ we define 
\begin{equation} \label{Fdef}
    F(t,s,N) = \#\Big\{n\ls N : \| x_n -t \| \ls \frac{s}{2N} \Big\}, \qquad 0\ls t\ls 1. 
\end{equation}

Heuristically, if $t$ is seen as a random variable uniformly distributed in $[0,1]$, then $F(t)=F(t,s,N)$ can be viewed as  the number of points of the sequence $\xn$ in a random interval of length $s/N$. \par Our purpose is to establish a link between the property of Poissonian $k$\,--\,th correlations and the asymptotic size of the $k$\,--\,th moment of $F(t,s,N)$.  For the $k = 2$ case, it is already known that a sequence $\xn$ has Poissonian pair correlations if and only if 
\[\lim_{N \to \infty} \int_{0}^{1} F(t,s,N)^2\,\mathrm{d}t = s^2 +s \qquad \text{ for all } s>0. \]
This is shown in \cite{marklof3} and also implicitly in \cite[Thm 3(i)]{heathbrown}. Regarding correlations of higher orders, some relevant results are shown in \cite{technau_walker} for the triple correlations of sequences of the form $(n^2\alpha)_{n\in\bN}$ and for values of the length $s>0$ that lie in a range that depends on $N$. \par We hereby consider both the $k$\,--\,th moment of $F(t,s,N)$ as well as its $k$\,--\,th factorial moment. To be more specific, given $s>0, N\gs 1$ and $k\gs 2$ we set
  \begin{equation}\label{moments}  \begin{aligned}I_k(s,N) & = \int_0^1 F(t,s,N)(F(t,s,N)-1) \cdots (F(t,s,N)-(k-1)) \,\mathrm{d}t,\\
    I_k^{*}(s,N) & = \int_{0}^{1} F(t,s,N)^k \,\mathrm{d}t.\end{aligned} \end{equation}
We prove that the property of Poissonian correlations of $k$\,--\,th order is reflected in the asymptotic behaviour of $I_k(s,N)$ and $I_k^*(s,N).$
\begin{theorem}\label{connection_to_higher_moments}
    Let $k \gs 2$ and $I_k(s,N), I_k^*(s,N)$ be as in \eqref{moments}. Assume the sequence $\xn$ has Poissonian $k$\,--\,th correlations. Then the following statements hold: \\
(i) $\lim\limits_{N \to \infty} I_k(s,N) = s^k $ for all $s>0.$ \vspace{1mm} \newline 
(ii) $\limsup\limits_{N \to \infty} I_k^{*}(s,N) = s^k + \mathcal{O}_k(s^{k-1}), \quad s \to \infty$. \newline 
(iii) If, in addition, $\xn$ has Poissonian $\ell$\,--\,correlations for all $\ell \ls k$, then 
    \begin{equation}\label{limit_of_moment} \lim_{N \to \infty} I_k^{*}(s,N) = s^k + c_{k,k-1}s^{k-1} + \ldots +c_{k,1}s , \end{equation}
     where $c_{k,i}, i = 1,\ldots,k-1$ denote the Stirling numbers of the second kind.
 \end{theorem}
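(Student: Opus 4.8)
The plan is to translate both moments of $F$ into the correlation functions of the paper by means of a single combinatorial identity, and then read off the statements from the Poissonian hypothesis together with Theorem \ref{thm3}. Writing $\delta = s/(2N)$ and using that $F(t,s,N)=\sum_{n\ls N}\one[\|x_n-t\|\ls\delta]$ is a sum of indicators, the standard expansion of a falling factorial of a counting function gives
\[ F(F-1)\cdots(F-(k-1)) = \sum_{\substack{i_1,\ldots,i_k\ls N\\ \text{distinct}}}\ \prod_{j=1}^k\one[\|x_{i_j}-t\|\ls\delta]. \]
Integrating over $t\in[0,1]$ and interchanging the (finite) sum with the integral, $I_k(s,N)$ becomes a sum over distinct tuples of the length of the intersection $\bigcap_j\{t:\|x_{i_j}-t\|\ls\delta\}$ of $k$ arcs of length $2\delta$. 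Taking $x_{i_1}$ as a base point and substituting $t=x_{i_1}+\rho/N$, I would check that for $N>2s$ (so that $2\delta<1/2$) this intersection length equals $\tfrac1N\,g\big(N(\!(x_{i_1}-x_{i_2})\!),\ldots,N(\!(x_{i_1}-x_{i_k})\!)\big)$, where
\[ g(w_1,\ldots,w_{k-1}) = \int_{-s/2}^{s/2}\ \prod_{r=1}^{k-1}\one\big[|w_r-\rho|\ls s/2\big]\,\mathrm d\rho. \]
The only delicate point is that on the support of the product the distance $\|\cdot\|$ to the nearest integer coincides with the ordinary absolute value, with no wrap-around; this holds precisely because each nonzero factor forces $|w_r|\ls 2\delta$. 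Since $g$ is continuous, nonnegative, supported in $[-s,s]^{k-1}$ and invariant under $\mathbf w\mapsto-\mathbf w$, we have $g\in C_c(\bR^{k-1})$ and the reduction reads simply $I_k(s,N)=R_k(g,N)$ for $N>2s$.

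Part (i) is then immediate. A Fubini computation gives
\[ \int_{\bR^{k-1}}g = \int_{-s/2}^{s/2}\ \prod_{r=1}^{k-1}\Big(\int_{\bR}\one[|w_r-\rho|\ls s/2]\,\mathrm dw_r\Big)\,\mathrm d\rho = \int_{-s/2}^{s/2}s^{k-1}\,\mathrm d\rho = s^k, \]
so applying the definition of Poissonian $k$\,--\,correlations to the test function $g$ yields $\lim_{N\to\infty}I_k(s,N)=\int g=s^k$.

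For the ordinary moments I would pass through the Stirling expansion $x^k=\sum_{\ell=1}^k S(k,\ell)\,x(x-1)\cdots(x-\ell+1)$, which after integrating over $t$ becomes the exact finite\,--\,$N$ identity $I_k^*(s,N)=\sum_{\ell=1}^k S(k,\ell)\,I_\ell(s,N)$, with $S(k,\ell)=c_{k,\ell}$ the Stirling numbers of the second kind. Part (iii) follows at once: if $\xn$ has Poissonian $\ell$\,--\,correlations for every $\ell\ls k$, then part (i) gives $\lim_N I_\ell(s,N)=s^\ell$ for each $\ell$, and summing produces \eqref{limit_of_moment}. For part (ii) only Poissonian $k$\,--\,correlations are available, so the lower\,--\,order $I_\ell$ must be controlled without their individual limits. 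The lower bound is trivial, since $F^k\gs F(F-1)\cdots(F-(k-1))$ pointwise gives $\limsup_N I_k^*\gs\lim_N I_k=s^k$. For the upper bound I would combine three facts: (a) finiteness of $L:=\limsup_N I_k^*(s,N)$, which follows from $I_\ell(s,N)\ls s\,R_\ell(s,N)$ (because $g\ls s\,\one_{[-s,s]^{\ell-1}}$) together with $\limsup_N R_\ell(s,N)<\infty$, the latter being the contrapositive of Theorem \ref{thm3} applied at order $\ell\ls k$; (b) the power\,--\,mean inequality on the probability space $([0,1],\mathrm dt)$, giving $I_\ell(s,N)\ls I_\ell^*(s,N)\ls \big(I_k^*(s,N)\big)^{\ell/k}$ and hence $\limsup_N I_\ell\ls L^{\ell/k}$; and (c) the Stirling identity, which upon taking $\limsup_N$ yields the self\,--\,referential bound
\[ L\ \ls\ s^k + \sum_{\ell=1}^{k-1}S(k,\ell)\,L^{\ell/k}. \]
Since every exponent $\ell/k<1$, a short bootstrap (first showing $L<2s^k$ for large $s$, then reinserting) turns this into $L=s^k+\mathcal O_k(s^{k-1})$, which together with the lower bound is exactly part (ii).

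I expect the main obstacle to be the first step: carrying out the geometric reduction to the clean identity $I_k(s,N)=R_k(g,N)$ with the correct test function, and verifying that for large $N$ the circle geometry (signed distances, wrap\,--\,around) contributes no error. Once this dictionary between moments of $F$ and the correlation functions is established, parts (i) and (iii) are direct consequences of the Poissonian hypothesis, and the only genuinely new estimate is the self\,--\,referential inequality in part (ii), whose resolution rests on the purely qualitative finiteness that Theorem \ref{thm3} supplies at all orders $\ell\ls k$.
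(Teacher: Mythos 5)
Your proposal is correct, and its skeleton coincides with the paper's: the falling--factorial expansion of $F(t,s,N)$, the reduction $I_k(s,N)=R_k(g,N)$ for $N$ large, and the Stirling identity $I_k^*(s,N)=\sum_{\ell=1}^{k}c_{k,\ell}I_\ell(s,N)$ are exactly the paper's Lemma \ref{lem12} and its treatment of parts (i) and (iii); indeed your sliding--window function $g(w_1,\ldots,w_{k-1})=\int_{-s/2}^{s/2}\prod_{r}\one\big[|w_r-\rho|\ls s/2\big]\,\mathrm{d}\rho$ is precisely the paper's test function $g_s^{(k)}$, since both compute the length of $[-s/2,s/2]\cap\bigcap_r[w_r-s/2,w_r+s/2]$. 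Two components, however, are genuinely different, and both are defensible. First, your Fubini evaluation $\int_{\bR^{k-1}}g=s^k$ is much shorter than the paper's Lemma \ref{integral_of_g}, which partitions $[-s,s]^{k-1}$ into the orthant pieces $D_\ell(A)$ and evaluates beta integrals; keeping $g$ in integral (convolution) form makes the value immediate, and this is a real simplification. Second, for part (ii) the paper argues quantitatively: iterating Proposition \ref{HaukePrinciple} gives $\limsup_{N}R_\ell(s,N)=\mathcal{O}_k(s^{\ell-1})$ for $\ell<k$ under Poissonian $k$--correlations, so the lower--order terms $R_\ell(g_s^{(\ell)},N)\ls sR_\ell(s,N)$ are directly $\mathcal{O}_k(s^{\ell})$. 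You instead invoke only the qualitative finiteness $\limsup_N R_\ell(s,N)<\infty$ (the contrapositive of Theorem \ref{thm3}) to get $L:=\limsup_N I_k^*(s,N)<\infty$, and then close via the power--mean bound $\limsup_N I_\ell\ls L^{\ell/k}$ and the self--referential inequality $L\ls s^k+\sum_{\ell<k}c_{k,\ell}L^{\ell/k}$ with a bootstrap; this is sound and yields the same $\mathcal{O}_k(s^{k-1})$ error, though since Theorem \ref{thm3} is itself a corollary of Proposition \ref{HaukePrinciple}, the underlying engine is identical and your route trades the paper's explicit scale--dependence for a Hölder argument. The only small points to tighten are the wrap--around threshold (your $N>2s$ is borderline when verifying $\|x_{i_{r+1}}-t\|=|w_r\pm\rho|/N$ on the circle; the paper takes $N\gs 4s$, and any fixed multiple of $s$ suffices since $N\to\infty$) and the degenerate term $\ell=1$, where $I_1(s,N)=s$ exactly rather than via a correlation bound, which is what produces the clean term $c_{k,1}s$ in \eqref{limit_of_moment}.
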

Theorem \ref{connection_to_higher_moments} provides further evidence for the connection between sequences with Poissonian correlations on the one hand, and random variables that follow the Poisson distribution on the other. It is known \cite{riordan} that the $k$\,--\,th factorial moment of a random variable following the Poisson distribution with parameter $s>0$ is equal to $s^k$, while its $k$\,--\,th moment is equal to the polynomial on the right\,--\,hand side of \eqref{limit_of_moment} (this is called the Bell polynomial of degree $k,$ see also \cite{carlitz}).

\subsection{Notation}\label{notation} Given two functions $f,g:(0,\infty)\to \mathbb{R},$ we write $f(t) = \mathcal{O}(g(t)),  t\to\infty,$ $f(t)= o(g(t)), t\to\infty$ and $f(t)=\Omega(g(t)), t\to\infty$ when  \vspace{-2mm}
\[\limsup_{t\to\infty} \frac{|f(t)|}{|g(t)|} < \infty, \quad   \quad \lim_{t\to\infty} \frac{f(t)}{g(t)} =0 \quad \text{ or } \quad \limsup_{t\to\infty}\frac{f(t)}{g(t)}>0 \] respectively. Any  dependence of the value of the limsup above on potential parameters is denoted by the appropriate subscripts in the $\mathcal{O}$\,--\,symbol. Given a real number $x\in \bR,$ we write $\{x\}$ for the fractional part of $x$, $\|x\|=\min\{|x-k|: k\in\mathbb{N}\}$ for the distance of $x$ from its nearest integer, and $$(\!(x)\!)=\begin{cases} \{x\}, &\text{ if } 0\ls \{x\} \ls \tfrac{1}{2} \\ \{x\}-1, &\text{ if } \tfrac{1}{2}< \{x\} < 1  \end{cases} $$ for the signed distance of $x$ from the origin modulo $1$. Further, we use the symbol $\{\,\cdot\,\}^+$ for the function \vspace{-2mm}
\[ \{x\}^+ = \begin{cases}x, &\text{ if } x\gs 0 \\ 0, &\text{ if } x<0 . \end{cases} \]
Throughout the paper, we shall implicitly consider the unit interval $[0,1]$ equipped with the topology induced by $\norm{\,\cdot \, }$ because we deal with distribution of sequences modulo $1$. This is homeomorphic to the interval $[0,1)$ with the same topology, so  for convenience, we will  work interchangingly with $[0,1]$ and $[0,1)$. \par We use the standard notation $e(x)=e^{2
\pi ix}.$  We also write $B(x_0,r)= \{x\in [0,1] : \|x- x_0\| \ls r\} $ for the interval with center $x_0$ and length $2r$ modulo $1$. The characteristic function of a set $A$ is denoted by $\one_A.$

\subsection{ Directions for Further Research} We end this introductory part with some interesting questions that would shed more light on the properties of sequences with Poissonian correlations of $k$\,--\,th order. \par
$\bullet$ In Theorem \ref{main} we proved that when a sequence $\xn$ has Poissonian $k$\,--\,th correlations it is uniformly distributed, but we do not know whether the correlations of orders $m<k$ also follow the Poissonian model. Are Poissonian correlations of order $k+1$ a property stronger than Poissonian correlations of order $k$? In other words, does any sequence $\xn$ with Poissonian $k$\,--\,th correlations also have Poissonian correlations of all orders $2\ls m<k?$ \par $\bullet$ We would like to know if some partial converse to Theorem \ref{connection_to_higher_moments} is true. Is it true, for example, that whenever \eqref{limit_of_moment} holds, the sequence $\xn$ has Poissonian correlations of all orders up to $k$?

\section{Properties of the functions $R_k, R_k^*$.}

In the present section we prove several properties of the functions $R_k$ and  $R_k^*$  defined in the introduction that will be used later in the proof of the main results. \\

\par We start by proving the inequality that will be the key ingredient in the proof of Proposition \ref{HaukePrinciple}. 
 
\begin{lem}
Let $m\gs 1.$ For any $M \gs 1$ and for all non-negative real numbers $x_1,x_2,\ldots, x_M \gs 0$ we have 
\begin{equation}\label{holder_type}
(x_1^{m+1}+ x_2^{m+1} + \ldots + x_{M}^{m+1}) \gs \frac{1}{M}(x_1 + x_2 + \ldots + x_M)(x_1^m + x_2^m + \ldots + x_M^m).
	\end{equation}
\end{lem}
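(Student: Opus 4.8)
The plan is to recognize this as a Chebyshev-type sum inequality and prove it via symmetrization of a double sum. First I would clear the denominator, so that the claim \eqref{holder_type} becomes equivalent to
\[
M\sum_{i=1}^{M} x_i^{m+1} \;\gs\; \Big(\sum_{i=1}^{M} x_i\Big)\Big(\sum_{j=1}^{M} x_j^m\Big).
\]
The point is that the left-hand side can be written as a double sum as well: since the summand $x_i^{m+1}$ does not depend on a second index, $M\sum_i x_i^{m+1} = \sum_{i=1}^{M}\sum_{j=1}^{M} x_i^{m+1}$, while the right-hand side is simply $\sum_{i,j} x_i x_j^m$.

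Next I would form the difference of the two sides as a single double sum,
\[
M\sum_{i} x_i^{m+1} - \Big(\sum_i x_i\Big)\Big(\sum_j x_j^m\Big) = \sum_{i=1}^{M}\sum_{j=1}^{M}\big(x_i^{m+1} - x_i x_j^m\big),
\]
and symmetrize it by averaging the $(i,j)$ term with the $(j,i)$ term. This gives
\[
\frac{1}{2}\sum_{i,j}\big(x_i^{m+1} + x_j^{m+1} - x_i x_j^m - x_j x_i^m\big)
= \frac{1}{2}\sum_{i,j}(x_i - x_j)(x_i^m - x_j^m),
\]
where the last equality is the elementary factorization $x_i^{m+1} + x_j^{m+1} - x_i x_j^m - x_j x_i^m = x_i(x_i^m - x_j^m) - x_j(x_i^m - x_j^m) = (x_i - x_j)(x_i^m - x_j^m)$.

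Finally I would observe that each summand is non-negative: because $t \mapsto t^m$ is non-decreasing on $[0,\infty)$ for $m\gs 1$, the factors $x_i - x_j$ and $x_i^m - x_j^m$ always share the same sign, so $(x_i - x_j)(x_i^m - x_j^m)\gs 0$. Summing over all $i,j$ shows the difference is non-negative, which is exactly \eqref{holder_type}. There is no real obstacle here — the only thing that needs to be checked is the sign of each term, which rests entirely on the monotonicity of $t\mapsto t^m$. I would note that the same conclusion follows at once from Chebyshev's sum inequality applied to the similarly ordered tuples $(x_i)$ and $(x_i^m)$, so the symmetrization argument is really just a self-contained proof of the special case of Chebyshev's inequality that we need.
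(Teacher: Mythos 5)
Your proof is correct, but it takes a genuinely different route from the paper's. The paper deduces \eqref{holder_type} from two applications of the H\"older inequality with conjugate exponents $p=m+1$ and $q=(m+1)/m$: one applied to the tuples $(x_1,\ldots,x_M)$ and $(1,\ldots,1)$, giving $\sum_i x_i \ls \big(\sum_i x_i^{m+1}\big)^{1/(m+1)}M^{m/(m+1)}$, and one applied to $(1,\ldots,1)$ and $(x_1^m,\ldots,x_M^m)$, giving $\sum_i x_i^m \ls \big(\sum_i x_i^{m+1}\big)^{m/(m+1)}M^{1/(m+1)}$; multiplying the two bounds yields the claim at once. You instead clear the denominator, write both sides as double sums, symmetrize, and reduce everything to the pointwise inequality $(x_i-x_j)(x_i^m-x_j^m)\gs 0$ --- that is, you give the standard self-contained proof of Chebyshev's sum inequality for the similarly ordered tuples $(x_i)$ and $(x_i^m)$, as you yourself observe at the end. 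Every step checks out: the rewriting of $M\sum_i x_i^{m+1}$ as $\sum_{i,j}x_i^{m+1}$, the factorization $x_i^{m+1}+x_j^{m+1}-x_ix_j^m-x_jx_i^m=(x_i-x_j)(x_i^m-x_j^m)$, and the sign argument, which needs only that $t\mapsto t^m$ is non-decreasing on $[0,\infty)$ and hence covers every real $m\gs 1$ (indeed every $m>0$). What your argument buys is self-containedness (no appeal to H\"older), a transparent view of the equality case, and the fact that the identical symmetrization proves the more general statement $M\sum_i a_ib_i \gs \big(\sum_i a_i\big)\big(\sum_i b_i\big)$ for any pair of similarly ordered tuples; what the paper's version buys is brevity, since H\"older is available off the shelf and the two displayed estimates multiply directly into \eqref{holder_type}.
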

\begin{proof} Applying the Hölder inequality with exponents $p=m+1$ and $q=(m+1)/m$ to the $M$--tuples $(x_1,\ldots,x_M)$ and $(1,\ldots,1)$ we get
\begin{equation}\label{hölder1}
x_1 + x_2 + \ldots +x_M  \ls 
\left(x_1^{m+1} + \ldots + x_M^{m+1}\right)^{\frac{1}{m+1}}M^{\frac{m}{m+1}},
\end{equation}
while the Hölder inequality with the same exponents  applied to the $M$--tuples $(1,\ldots,1)$ and $(x_1^m,\ldots,x_M^m)$   yields
\begin{equation}\label{hölder2} x_1^m + x_2^m + \ldots + x_M^m \ls \left(x_1^{m+1} + \ldots + x_M^{m+1}\right)^{\frac{m}{m+1}}
   M^{\frac{1}{m+1}}.
\end{equation}
Multiplying \eqref{hölder1} and \eqref{hölder2}, we obtain \eqref{holder_type}.\end{proof}
 
The following proposition  provides a relation between correlation functions of different orders. This result will later have a key role in the proof of both Theorems \ref{main} and \ref{main2}, while it straightforwardly implies Theorem \ref{thm3}. 

\begin{proposition}\label{HaukePrinciple}
Let $\xn \subseteq [0,1]$ be a sequence and $m\gs 2$. There exists a constant $s_m > 0$ such that for any $s > s_m$, the inequality 
	\begin{equation} \label{conclusion}
	R_m\Big(\frac{s}{3},N\Big) \ls \dfrac{6}{s} R_{m+1}(s,N)  \end{equation}
	holds for all $N\gs N_0(s,m).$ Moreover, the values of the constants $s_m$ and $N_0$ are independent of the sequence $\xn$.
\end{proposition}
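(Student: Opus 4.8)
The plan is to pass from the correlation functions to counts of points in short arcs and then invoke the Hölder--type inequality \eqref{holder_type}. First I would reduce both sides to tuple counts: regarding $x_{i_1}$ as the centre of a tuple, $N\,R_m(s/3,N)$ counts the ordered $m$--tuples of distinct indices all of whose coordinates lie within $s/(3N)$ of $x_{i_1}$, and $N\,R_{m+1}(s,N)$ counts the ordered $(m+1)$--tuples all of whose coordinates lie within $s/N$ of $x_{i_1}$. Working on the circle $\bR/\bZ$, I would partition it into $M\approx 3N/s$ arcs of length $\approx s/(3N)$, and for each arc I would form the sliding window $W_\ell$ consisting of it together with its two neighbours; this window has length $\approx s/N$. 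Let $Y_\ell=\#\{n\ls N: x_n\in W_\ell\}$, so that $\sum_\ell Y_\ell=3N$ (each point lies in exactly three windows).

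The two combinatorial bounds are then as follows. For the upper bound, any $m$--tuple counted by $R_m(s/3,N)$ has all its coordinates within $s/(3N)$ of its centre, hence inside the window $W_\ell$ attached to the arc containing that centre; summing over the arc of the centre gives $N\,R_m(s/3,N)\ls \sum_\ell (Y_\ell)_m$, where $(Y)_m=Y(Y-1)\cdots(Y-m+1)$. For the lower bound, any ordered $(m+1)$--tuple of distinct points lying inside a single window $W_\ell$ automatically satisfies the defining condition of $R_{m+1}(s,N)$, because such points are mutually within $s/N$; since a given tuple lies in at most three windows, $\sum_\ell (Y_\ell)_{m+1}\ls 3N\,R_{m+1}(s,N)$. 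The crucial point is to apply \eqref{holder_type} not to the arc counts but to the window counts $(Y_\ell)$: with $M$ windows and $\sum_\ell Y_\ell=3N$ it yields $\sum_\ell Y_\ell^{m+1}\gs \tfrac{3N}{M}\sum_\ell Y_\ell^m$, and $3N/M\to s$ as $N\to\infty$. This is exactly where the density of the sequence (the fact that $N$ points must fit into the unit interval, forcing on average $\approx s$ points per window) enters and produces the gain by a factor $\approx s$.

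Combining the three ingredients, and using the elementary comparisons $(Y)_m\ls Y^m$ and $(Y)_{m+1}\gs Y^{m+1}-\binom{m+1}{2}Y^m$ to pass between falling factorials and ordinary powers, one obtains $N\,R_{m+1}(s,N)\gs \tfrac13\big(\tfrac{3N}{M}-\binom{m+1}{2}\big)\sum_\ell Y_\ell^m \gs \tfrac13\big(s-\mathcal{O}_m(1)\big)\,N\,R_m(s/3,N)$, which rearranges to $R_m(s/3,N)\ls \tfrac{3}{s-\mathcal{O}_m(1)}R_{m+1}(s,N)\ls \tfrac{6}{s}R_{m+1}(s,N)$ as soon as $s>s_m$ (an explicit quantity of size $\approx m^2$) and $N>N_0(s,m)$. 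Since every estimate only involves $N$, the number of windows $M$, and universal corrections, the thresholds $s_m$ and $N_0$ depend on $s$ and $m$ but never on $\xn$. The main obstacle I anticipate is the calibration in the tiling step: the upper bound forces each window to contain the whole $s/(3N)$--ball about its centre, while the lower bound forces the windows to have diameter at most $s/N$, and these two requirements must be met simultaneously while the overlap factor $3$ and the rounding of $M$ are kept as genuine constants (to be absorbed by the factor--$2$ slack between the $3/s$ produced by \eqref{holder_type} and the target $6/s$). Securing a constant that does not deteriorate with $m$ is precisely what dictates applying \eqref{holder_type} to the window counts $Y_\ell$ rather than to the individual arc counts.
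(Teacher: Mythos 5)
Your proposal is correct and is essentially the paper's own argument in a different packaging: your family of sliding windows $W_\ell$ of three consecutive $s/(3N)$--arcs coincides with the union of the paper's three shifted partitions $(A_\ell),(A_\ell'),(A_\ell'')$, your ball--in--window containment is exactly Lemma \ref{claim}, and applying \eqref{holder_type} once to the window counts with $\sum_\ell Y_\ell = 3N$ and $M\approx 3N/s$ windows is numerically the same as the paper's three separate applications with $\sum_\ell y_\ell = N$ and $K\approx N/s$ intervals each. The only real deviations are harmless: replacing the paper's alternating Stirling--number pairing by the one--shot bound $(Y)_{m+1}\gs Y^{m+1}-\binom{m+1}{2}Y^m$ for integer $Y$ is a clean variant that even yields a quantitatively better threshold $s_m\asymp m^2$ (versus the paper's $s_m=2\max_i c_i$), and the calibration/rounding issue you flag at the seam of the tiling is equally present, and equally absorbable into the factor--$2$ slack, in the paper's treatment.
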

\begin{proof}
We partition the unit interval into pieces of size approximately $s/N$ and count points in each interval: we set 
	$$ K = K(s,N) = \left\lceil \frac{N}{s} \right\rceil $$
	and for $0 \ls \ell \ls K-1$ we define \begin{equation*}
	y_\ell = y_\ell(s,N) = \#\Big\{1 \ls i \ls N: x_i \in \left[\tfrac{\ell s}{N},\tfrac{(\ell+1)s}{N}\right)\cap [0,1] \Big\} .
	\end{equation*}
Observe that 
	\begin{eqnarray*}
	R_{m+1}(s,N) &\gs& \frac{1}{N}\sum_{\ell=0}^{K-1} y_\ell(y_\ell-1)\ldots (y_\ell-m)\nonumber \\
	&=& \frac{1}{N}\sum_{\ell=0}^{K-1}\Big( y_\ell^{m+1} - c_{m} y_\ell^{m} +  c_{m-1} y_\ell^{m-1} - \ldots + (-1)^{m} c_1 y_\ell\Big) \label{eleven}
\end{eqnarray*}
with $c_i \in \mathbb{N}$.

	First, we consider the case when $m$ is odd.
Since $y_0 + y_1 + \ldots + y_{K-1}=N,$	applying inequality \eqref{holder_type} to \eqref{eleven} we obtain 
	\[R_{m+1}(s,N) \gs \frac{1}{N}\sum_{\ell = 0}^{K-1}[(\tfrac{N}{K}-c_{m})y_\ell^{m} + (\tfrac{N}{K}c_{m-1}-c_{m-2})y_\ell^{m-2} + \ldots + (\tfrac{N}{K}c_2 - c_1)y_\ell].\]
	Note that for any $\varepsilon > 0$, we find that for $N$ sufficiently large 
$\dfrac{N}{K} \gs (1-\varepsilon)s $ and therefore   
	\[R_{m+1}(s,N) \gs \frac{1}{N}\sum_{\ell = 0}^{K-1}[((1-\varepsilon)s-c_{m})y_\ell^{m} + ((1-\varepsilon)sc_{m-1}-c_{m-2})y_\ell^{m-2} + \ldots + ((1-\varepsilon)sc_2 - c_1)y_\ell].\]
	Similarly, for $m$ even, we have
	\[R_{m+1}(s,N) \gs \frac{1}{N}\sum_{\ell =0}^{K-1}[((1-\varepsilon)s-c_{m})y_\ell^{m} + ((1-\varepsilon)sc_{m-1}-c_{m-2})y_\ell^{m-2} + \ldots + ((1-\varepsilon)sc_3 - c_2)y_\ell^2 + c_1 y_\ell].\]
    Now let $\varepsilon>0$ be small enough such that $s(1-\varepsilon) > s_m := 2\max\{c_i: 1 \ls i \ls m\}$. Then all non\,--\,leading terms are positive and hence we can estimate
	\[R_{m+1}(s,N) \gs \frac{1}{N}\sum_{\ell =0}^{K-1}((1-\varepsilon)s-c_{m})y_\ell^{m} \gs \frac{s}{2}\cdot \frac{1}{N}\sum_{\ell=0}^{K-1}y_\ell^{m}, \]
	which implies that
	\begin{equation}\frac{1}{N}\sum_{\ell=0}^{K-1} y_l^{m} \ls \frac{2R_{m+1}(s,N)}{s} \cdot \label{lower_bound}\end{equation}
	We now seek an upper bound for $R_{m}(\frac{s}{3},N)$. For each $0\ls \ell \ls K-1$ consider the  sets
	\begin{equation*}
	    A_\ell = \left[\frac{s\ell}{N},\frac{s(\ell+1)}{N}\right),\qquad A_\ell' = A_{\ell} + \frac{s}{3N}, \qquad   A_\ell'' = A_{\ell} + \frac{2s}{3N} \,  
	\end{equation*}
	(where we understand the intervals modulo 1). Here we have essentially defined three different partitions of the unit interval: the partition $(A_{\ell})_{\ell=0}^{K-1}$ we employed previously, and the partitions $(A_{\ell}')_{\ell=0}^{K-1}$ and $(A_{\ell}'')_{\ell=0}^{K-1}$ that we get by shifting the intervals of the first partition by $s/3N$ and $2s/3N$ respectively. 	Writing $y_\ell',y_\ell''$ for the number of points $x_i$ in $A_{\ell}'$ and $A_{\ell}''$ respectively, it is straightforward to show that an analogue of  \eqref{lower_bound} holds for $(y_\ell')_{\ell=0}^{K-1}$ and $(y_\ell'')_{\ell=0}^{K-1}$. \par At this point, we need to employ the following Lemma, the proof of which we postpone for later in the text. 
\begin{lem}\label{claim} If the $m$\,--\,tuple $(x_{i_1},\ldots,x_{i_{m}})$ contributes something to the sum
	$R_{m}(\frac{s}{3},N)$, then there exists some $0\ls \ell \ls K-1$ such that the points $x_{i_1}, x_{i_2}, \ldots, x_{i_m}$ belong all to $A_{\ell}$ or to $A_{\ell}'$ or to $A_{\ell}''$.  \end{lem}
Using Lemma \ref{claim} we finally obtain
	\begin{align*}R_{m}\Big(\frac{s}{3},N\Big) \ls & \frac{1}{N}\sum_{\ell=0}^{K-1} y_\ell(y_\ell-1)\ldots(y_\ell-(m-1))
	 +  \frac{1}{N}\sum_{\ell=0}^{K-1}y_\ell'(y_\ell'-1)\ldots(y_\ell'-(m-1)) 
	\\&+  \frac{1}{N}\sum_{\ell=0}^{K-1} y_\ell''(y_\ell''-1)\ldots(y_\ell''-(m-1)) 
	 \\ \ls & \frac{1}{N}\sum_{\ell =0}^{K-1}\big(y_\ell^m +  (y_{\ell}')^m +  (y_{\ell}'')^m\big) \ls \frac{6}{s}R_{m+1}(s,N).
	\end{align*}
\end{proof}
We now provide the proof of Lemma \ref{claim}. 

\begin{proof}[Proof of Lemma \ref{claim}] Since the $m$\,--\,tuple $(x_{i_1},\ldots,x_{i_{m}})$ contributes to
	$R_{m}(\frac{s}{3},N)$, we have for all $j,k=1,2,\ldots, m$ that
	\begin{equation} \label{maxdistance}
	\lVert x_{i_j}- x_{i_k}\rVert \ls \lVert x_{i_j}- x_{i_1}\rVert + \lVert x_{i_1}- x_{i_k}\rVert \ls \frac{2s}{3N} \cdot \end{equation}
	For each $0\ls \ell \ls K-1$ the set $A_{\ell}$ can be written as a disjoint union 
	\begin{equation} \label{disjoint} 
	A_\ell = (A_\ell \cap A_{\ell-1}') \sqcup  (A_{\ell}' \cap A_{\ell-1}'') \sqcup  (A_\ell \cap A_{\ell}'')   \end{equation}
	where the three disjoint sets $A_\ell \cap A_{\ell-1}', A_{\ell}' \cap A_{\ell-1}''$ and $A_\ell \cap A_{\ell}''$ appearing are consecutive disjoint intervals of length $s/ 3N.$ \par We may assume without loss of generality that the points $x_{i_2}, x_{i_3},\ldots,x_{i_{m}}$ are in increasing order; that is, the signed distances of differences of consecutive terms are $(\!(x_{i_{n+1}}-x_{i_n})\!)\gs 0$ for $n=2,\ldots, m-1.$ \par We consider two different cases regarding the relative position of $x_{i_1}$ with respect to $x_{i_2}.$ If $(\!(x_{i_2}-x_{i_1})\!)\gs 0,$ let $0\ls \ell \ls K-1$ be such that $x_{i_1}\in A_{\ell}.$  Then $x_{i_1}$ lies in one of the three sets in the disjoint union  in \eqref{disjoint}; assume without loss of generality $x_{i_1}\in A_\ell \cap A_{\ell-1}'.$ Since $\|x_{i_1}-x_{i_m}\| \ls 2s/(3N)$ by \eqref{maxdistance} and the points $x_{i_1}, x_{i_2},\ldots, x_{i_m}$ are in increasing order, they will all lie in $A_{\ell}'\cap A_{\ell-1}''$ or $A_\ell\cap A_{\ell}''$, and therefore they will all lie in $A_\ell.$ Similarly, we see that if $x_{i_1}\in A_{\ell}' \cap A_{\ell-1}''$ then all points lie in $A_\ell '$ while if $x_{i_1}\in A_\ell \cap A_{\ell}''$ then they will all lie inside $A_{\ell}''.$ If $(\!(x_{i_1}-x_{i_2})\!)>0$, we repeat the same argument with the point $x_{i_2}$ in the place of $x_{i_1}.$  The lemma is now proved.

\end{proof}

\begin{rmk} Proposition \ref{HaukePrinciple} provides an inequality involving the correlation functions $R_k(\tfrac{s}{3},N)$ and $R_{k+1}(s,N)$ only for values of the scale $s>0$ that are large enough. This is not a restriction that comes from the method of proof followed, but rather a genuine obstruction, as can be seen by the following example. Define the sequence $(x_n)_{n\in\bN}$ by 
\[ x_n = \frac{1}{2^m}\big(2\big\lceil \tfrac{k}{2}\big\rceil -1\big) \quad \text{whenever } n=2^m+k,\, m\gs 0 \text{ and } 1\ls k \ls 2^m.  \]
That is, $x_1=x_2=0,$ $x_3=x_4=\frac12,$ $x_5=x_6=\frac14$, $x_7=x_8=\frac34$ etc. Then for the choice of the scale $s<2$ and for every integer of the form $N=2^m$ we have 
\[ R_2(s, N) = 1 \quad \text{ but } \quad R_3(s,N) = 0. \]
Using similar arguments we can define sequences for which $R_m(s,N)=1$ and $R_{m+1}(s,N)=0$ for some given $s>0$ and for infinitely many $N\gs 1.$  The upshot is that we cannot obtain an analogue of \eqref{conclusion} that holds for all values of $s>0.$
\end{rmk}

The next proposition connects the size of $R_k(s_1,\ldots,s_{k-1},N)$ with the size of $R_k^*(s_1,\ldots,s_{k-1},N)$ when the scales $s_1,\ldots, s_{k-1}$ are written in decreasing order.  

\begin{proposition}\label{star_lower_orders}
Let $s_1 \gs s_2 \gs \ldots \gs s_{k-1} > 0$. Defining $R_1(s,N) = 1$ for all $s>0$ and $N\gs 1$, we have
    \begin{equation*}
    R_k^*(s_1,\ldots,s_{k-1},N) \ls R_k(s_1,\ldots,s_{k-1},N) + \sum_{m= 1}^{k-1}b_mR_m(s_1,\ldots,s_{m-1},N), \end{equation*}
where $b_1,\ldots, b_{k-1} \in \mathbb{N}$ are constants depending only on $k$.
\end{proposition}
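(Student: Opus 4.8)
The plan is to compare the two counts combinatorially. The quantity $N R_k^*(s_1,\ldots,s_{k-1},N)$ counts all $k$-tuples $(i_1,\ldots,i_k)$ of indices in $\{1,\ldots,N\}$ (repetitions allowed) subject to $\|x_{i_1}-x_{i_{r+1}}\|\ls s_r/N$ for $1\ls r<k$, whereas $N R_k(s_1,\ldots,s_{k-1},N)$ counts exactly those with all $i_j$ distinct. Hence the excess $R_k^*-R_k$ is produced entirely by tuples in which some indices coincide, and the idea is to bound the contribution of each coincidence pattern by a lower-order correlation. Concretely, to each tuple I would associate the set partition $\pi$ of $\{1,\ldots,k\}$ defined by $a\sim b \iff i_a=i_b$, and group the tuples according to $\pi$; if $\pi$ has exactly $m$ blocks, the tuple has $m$ distinct values. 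I would then collapse such a tuple to the $m$-tuple $(z_1,\ldots,z_m)$ of its distinct values, where $z_1=i_1$ is the value carried by the block $B_1$ containing position $1$, and $z_2,\ldots,z_m$ are the values of the remaining (``satellite'') blocks, listed in the order of their smallest positions.

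The heart of the argument, and the step I expect to be the main obstacle, is to verify that this collapsed tuple is counted by $R_m$ with the \emph{largest} available scales, i.e. that $(z_1,\ldots,z_m)$ satisfies the defining inequalities of $R_m(s_1,\ldots,s_{m-1},N)$. This is exactly where the hypothesis $s_1\gs\cdots\gs s_{k-1}$ enters. Ordering the satellite blocks $B^{(2)},\ldots,B^{(m)}$ by their minimal positions $q_2<\cdots<q_m$, these are distinct integers in $\{2,\ldots,k\}$, so $q_t\gs t$ for each $t$. Selecting the constraint arising from the minimal position $q_t$ gives $\|x_{z_1}-x_{z_t}\|\ls s_{q_t-1}/N\ls s_{t-1}/N$, the last inequality by monotonicity of the scales. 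Thus $(z_1,\ldots,z_m)$ is a legitimate tuple for $R_m(s_1,\ldots,s_{m-1},N)$: its entries are distinct and its $t$-th inequality holds with scale $s_{t-1}$. The delicate point is precisely this bookkeeping that matches the $t$-th satellite to the scale $s_{t-1}$; without the ordering of the scales one could not guarantee that the tighter constraints $s_2,s_3,\ldots$ are met at the correct coordinates.

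It then remains to count preimages. For a fixed partition $\pi$ with $m$ blocks, reconstructing the original $k$-tuple from $(z_1,\ldots,z_m)$ (assign $z_1$ to $B_1$ and $z_t$ to the $t$-th satellite block) is a bijection between the tuples with pattern $\pi$ and a subset of the tuples counted by $R_m(s_1,\ldots,s_{m-1},N)$; hence the number of tuples with pattern $\pi$ is at most $N R_m(s_1,\ldots,s_{m-1},N)$. Summing over all partitions with $m$ blocks — there are $S(k,m)$ of them, the Stirling number of the second kind — and over $m=1,\ldots,k-1$, while isolating the $m=k$ term as exactly $N R_k(s_1,\ldots,s_{k-1},N)$, yields
\[
N R_k^*(s_1,\ldots,s_{k-1},N) \ls N R_k(s_1,\ldots,s_{k-1},N) + \sum_{m=1}^{k-1} S(k,m)\, N R_m(s_1,\ldots,s_{m-1},N).
\]
Dividing by $N$ gives the claim with $b_m=S(k,m)\in\mathbb{N}$. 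The only boundary case to check separately is $m=1$ (all indices equal): there are $N$ such tuples, all satisfying the constraints trivially, which matches $S(k,1)\,N\,R_1=N$ under the convention $R_1=1$ — and this is precisely why that convention is imposed.
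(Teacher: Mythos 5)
Your proposal is correct and follows essentially the same route as the paper's proof: both classify $k$\,--\,tuples by the coincidence partition of $\{1,\ldots,k\}$ with blocks ordered by minimal positions, exploit the key observation that the $t$\,--\,th block minimum satisfies $q_t \gs t$ together with the monotonicity $s_1 \gs \cdots \gs s_{k-1}$ to dominate each pattern's contribution by $R_m(s_1,\ldots,s_{m-1},N)$, and then sum over partitions to obtain $b_m = S(k,m)$, the Stirling numbers of the second kind. Your injection/collapse formulation and the paper's direct bound $R_k^{\mathcal{P}} \ls R_m$ are the same argument in slightly different packaging, and your handling of the $m=1$ boundary case matches the paper's convention $R_1 \equiv 1$.
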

\begin{proof}
Observe that if a $k$\,--\,tuple $(i_1,i_2,\ldots,i_k)$ consists of distinct indices $i_1,\ldots, i_k\ls N$ then its contribution to $R_k^*(s_1,\ldots,s_{k-1},N)$ defined in \eqref{rkstardef} is the same as its contribution to $R_k(s_1,\ldots,s_{k-1},N),$ however the situation is different when the indices $i_1,\ldots, i_k$ are not pairwise distinct. \par 
Any $k$\,--\,tuple $\bfi=(i_1,i_2,\ldots,i_k)$ gives rise to a uniquely determined partition $\cP_{\bfi}=\{J_1, J_2, \ldots, J_m\}$ of $[k]=\{1,2,\ldots, k\}$ (where $m\ls k$) such that the following properties hold:
\begin{enumerate}
    \item[(i)]$i_j =i_\ell  \Leftrightarrow (j,\ell \in J_t\, \text{ for some } t\ls m)$
    \item[(ii)]$\min J_i < \min J_{i+1}, \quad i=1,\ldots, m-1.$
\end{enumerate}
Conversely, given a partition $\cP=\{J_1,J_2,\ldots, J_m\}$ of $[k]$ with the property that $\min J_t < \min J_{t+1}$ for $t=1,2,\ldots,m-1$, we define the correlation counting function $R_k^{\cP}=  R_k^{\cP}(s_1,\ldots,s_{k-1},N)$ by
\[ R_k^{\cP}=\# \frac{1}{N}\left\{  \bfi \in [N]^k, \cP_{\bfi}=\cP : \lVert x_{i_1} - x_{i_2}\rVert\ls \frac{s_1}{N},\ldots ,\|x_{i_{1}} - x_{i_k}\|\ls \frac{s_{k-1}}{N}\right\}.  \]
That is, $R_k^{\cP}$ is the variant of $R_k$ that counts correlations only over indices $i_1,\ldots,i_k\ls N$ with partition $\cP_{\bfi}$ equal to $\cP.$ \par
Now if for a fixed partition $\cP$ as above we set 
$$ j_r= \min J_r , \qquad r=1,2,\ldots, m$$ 
then we have that $j_r\gs r$. Write $J_r = \{ t_1 < t_2 <\ldots < t_s\}$ for one of the sets comprising $\cP.$ In view of the hypothesis that $s_1 \gs s_2 \gs \ldots \gs s_{k-1}$, the inequalities
\[\|x_{i_1} - x_{i_{t_\ell}}\| \ls \frac{s_{t_\ell-1}}{N} , \qquad \ell=1,\ldots,s \]
appearing in the definition of $R_k^{\cP},$ altogether imply that \[ \|x_{i_1} - x_{i_{t_\ell}}\| \ls  \frac{s_{t_\ell-1}}{N} \ls  \frac{s_{t_1-1}}{N} =  \frac{s_{j_r-1}}{N} \ls \frac{s_{r-1}}{N}, \quad \ell=1,\ldots, s. \]
Thus for the fixed partition $\cP=\{J_1,\ldots, J_m\}$ as above, we have  
\begin{align*}
R_k^{\mathcal{P}}(s_1,\ldots,s_{k-1},N) &\ls R_m(s_1,\ldots,s_{m-1},N).
\end{align*}
Finally, for the counting function $R_k^*$, summing over all possible partitions $\cP$ of $[k]$ we deduce that
\begin{align*}
R_k^*(s_1,\ldots,s_{k-1},N) &= \sum_{\cP}R_k^{\cP}(s_1,\ldots,s_{k-1},N) \\
&= \sum_{\cP: |\cP|=k}R_k^{\cP}(s_1,\ldots,s_{k-1},N) +\hspace{-2mm} \sum_{\substack{1\ls m \ls k-1\\ \cP : |\cP|=m}}\hspace{-2mm} R_k^{\cP}(s_1,\ldots,s_{k-1},N) \\[-2ex]
&\ls \,  R_k(s_1,\ldots,s_{k-1},N) + \sum_{m=1}^{k-1}b_m R_m(s_1,\ldots,s_{m-1},N),
\end{align*}
with $b_m \in \mathbb{N},\, m=1,\ldots, k-1$.
\end{proof}

\begin{rmk} (i)
 Proposition \ref{star_lower_orders} uses a combinatorial argument to derive a relation between the pair correlation function $R_k$ and $R_k^*,$ which is the corresponding sum over not necessarily distinct indices. A similar argument can  be found in \cite[Chapter 4]{rs2}. \par (ii)
The constants $c_i = c_i(m)$ appearing in the proof of Proposition \ref{HaukePrinciple} are the unsigned Stirling numbers of the first kind and the numbers $b_i=b_i(k)$ from Proposition \ref{star_lower_orders} are the Stirling numbers of the second kind. This can be easily seen from  \eqref{eleven} and the definition of the $b_i$ as the number of partitions of $[k]$ into $i$ nonempty subsets, respectively (see \cite{combinatorics} for more details).
\end{rmk}
 
\begin{corollary}\label{R_k_equiv_R_k*}
Let $\xn \subseteq [0,1]$ be an arbitrary sequence. For all $s>0$ large enough we have 
 \begin{equation}\label{R_k*_in_terms_of_R_k}
     R_k(s,N) \ls R_k^{*}(s,N) \ls R_k(s,N) + \mathcal{O}_k\left(\frac{1}{s}R_k(3^ks,N)\right), \qquad N\to \infty
 \end{equation}
 If $\xn$ has Poissonian k\,--\,th correlations, we have for $s$ sufficiently large
 \begin{equation} \label{34}
    R_k^*(s,N) = (2s)^{k-1} + \mathcal{O}(s^{k-2}), \qquad N\to \infty. 
\end{equation}
\end{corollary}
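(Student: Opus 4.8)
The plan is to establish the two displayed assertions of the corollary in turn, obtaining \eqref{34} from \eqref{R_k*_in_terms_of_R_k} together with the defining limit of Poissonian $k$\,--\,th correlations recalled after \eqref{kthcorrelationfunction_old_def}. The left-hand inequality $R_k(s,N)\ls R_k^*(s,N)$ in \eqref{R_k*_in_terms_of_R_k} is immediate from the definitions \eqref{kthcorrelationfunction_old_def} and \eqref{rkstardef}: every $k$\,--\,tuple of distinct indices contributing to $R_k$ also contributes to $R_k^*$, which in addition counts tuples with repeated indices. For the right-hand inequality I would specialise Proposition \ref{star_lower_orders} to $s_1=\cdots=s_{k-1}=s$ (so that the monotonicity hypothesis holds with equality), obtaining
\[ R_k^*(s,N)\ls R_k(s,N)+\sum_{m=1}^{k-1} b_m\, R_m(s,N), \]
with constants $b_m$ depending only on $k$. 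It then remains to bound the lower-order functions $R_m(s,N)$, $1\ls m\ls k-1$, by $R_k$ evaluated at a larger scale.

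The engine for this is Proposition \ref{HaukePrinciple}. Rewriting its conclusion \eqref{conclusion} after the substitution $s\mapsto 3s$ gives, for each fixed $m\gs 2$ and all $s$ exceeding a threshold,
\[ R_m(s,N)\ls \frac{2}{s}\, R_{m+1}(3s,N),\qquad N\gs N_0(s,m). \]
I would apply this repeatedly, climbing from order $m$ to order $k$: after $k-m$ steps, in which the scale is multiplied by $3$ at each stage, one reaches
\[ R_m(s,N)\ls \frac{2^{\,k-m}}{3^{\binom{k-m}{2}}\, s^{\,k-m}}\, R_k\big(3^{\,k-m}s,\,N\big). \]
Since $R_k$ is non-decreasing in its scale and $3^{\,k-m}\ls 3^k$, each right-hand side is at most $R_k(3^k s,N)$, so $R_m(s,N)=\mathcal{O}_k\big(s^{-(k-m)}R_k(3^ks,N)\big)$. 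Among $1\ls m\ls k-1$ the slowest decay in $1/s$ is the factor $s^{-1}$ attained at $m=k-1$; the remaining terms (and the bounded contribution from $m=1$, where $R_1\equiv 1$) are of strictly smaller order. Summing therefore yields $\sum_{m=1}^{k-1}b_m R_m(s,N)=\mathcal{O}_k\big(\tfrac1s R_k(3^ks,N)\big)$, which is precisely the right-hand side of \eqref{R_k*_in_terms_of_R_k}.

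Finally, for \eqref{34} I would invoke the hypothesis of Poissonian $k$\,--\,th correlations, under which $R_k(t,N)\to (2t)^{k-1}$ as $N\to\infty$ for every fixed $t>0$. Applying this with $t=s$ and $t=3^k s$ and passing to the limit $N\to\infty$ in \eqref{R_k*_in_terms_of_R_k} gives, for each fixed large $s$,
\[ (2s)^{k-1}\ls \liminf_{N\to\infty} R_k^*(s,N)\ls \limsup_{N\to\infty} R_k^*(s,N)\ls (2s)^{k-1}+\mathcal{O}_k\big(s^{k-2}\big), \]
where the error term arises from $\tfrac1s(2\cdot 3^k s)^{k-1}=(2\cdot 3^k)^{k-1}s^{k-2}$. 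This is exactly \eqref{34}.

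The technical heart—and the step I expect to be the main obstacle—is the iteration of Proposition \ref{HaukePrinciple}: one must track the accumulating constants, the geometrically growing scale $3^{\,k-m}s$, and above all the composition of the thresholds ``$s$ large enough'' and ``$N\gs N_0(s,m)$'' across the finitely many orders $2\ls m\ls k-1$. The saving grace is that in Proposition \ref{HaukePrinciple} the constants $s_m$ and $N_0$ are \emph{independent of the sequence}, so taking the maximum over the finitely many orders involved produces thresholds uniform in $\xn$; this is what guarantees that the resulting $\mathcal{O}_k$ is sequence-independent, as the statement requires.
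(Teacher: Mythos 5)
Your proposal is correct and takes essentially the same route as the paper: the paper likewise combines the rescaled, iterated form of Proposition \ref{HaukePrinciple} with monotonicity of $R_k(\cdot,N)$ in the scale to get $R_m(s,N) \ll_{m,k} s^{-(k-m)}R_k(3^{k-m}s,N)\ls s^{-1}R_k(3^ks,N)$ for all large $s$, feeds this into Proposition \ref{star_lower_orders} with equal scales, and obtains \eqref{34} by passing to the limit exactly as you do. The only step to tighten is your claim that the $m=1$ term $b_1 R_1\equiv b_1$ is ``of strictly smaller order'': a constant is absorbed into $\mathcal{O}_k\bigl(\tfrac1s R_k(3^ks,N)\bigr)$ only because $R_k(3^ks,N)\gg_k s^{k-1}$ for large $s$, which follows by running your iteration from the base $R_1\equiv 1$ (the pigeonhole step in the proof of Proposition \ref{HaukePrinciple} extends verbatim to $m=1$) --- an elision the paper shares, since it includes $i=1$ in its chain while the proposition is stated only for $m\gs 2$.
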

\begin{proof}
\eqref{34} follows immediately from \eqref{R_k*_in_terms_of_R_k} under the assumption of Poissonian k\,--\,th correlations.
Also, the first inequality of \eqref{R_k*_in_terms_of_R_k} is obvious. For the second inequality of \eqref{R_k*_in_terms_of_R_k},
 we use Proposition \ref{HaukePrinciple} and monotonicity of $R_k(s,N)$ in $s$ to deduce that for $1 \ls i \ls k-1,$   
 \[ R_i(s,N) \ll_{i,k} \frac{1}{s^{k-i}} R_k(3^{k-i}s,N) \ls \frac{1}{s} R_k(3^ks,N), \qquad N\to \infty \]
 for all $s>0$  sufficiently large. Using Proposition \ref{star_lower_orders}, the result follows.
 \end{proof}

Corollary \ref{R_k_equiv_R_k*} shows us that for $s$ large enough, we can work with $R_k^{*}$ instead of $R_k$. The function $R_k^{*}$ satisfies the following  inequalities that will be used in the proofs of Theorems \ref{main} and \ref{main2}.
\begin{proposition}\label{R_k^*_hölder_prop}
 {\it (i)} For any $s>0$ and $N\gs 1,$  
 \begin{equation} \label{R2andRk}
       R_2^*(s,N)^{k-1} \ls R_k^*(s,N).
 \end{equation}
{\it (ii)} Let $s_1,s_2,\ldots,s_{k-1} > 0$ and $N\gs 1$. Then  
    \begin{equation}\label{R_k^*_hölder}R_k^*(s_1,\ldots,s_{k-1},N)^{k-1} \ls  R_k^{*}(s_1,N) \, R_k^{*}(s_2,N)\cdots R_k^{*}(s_{k-1},N) .\end{equation}
\end{proposition}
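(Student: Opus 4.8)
The plan is to reduce both inequalities to a single algebraic identity for $R_k^*$ and then invoke two classical inequalities. For an index $i\ls N$ and a scale $\sigma>0$, let
\[ \nu_i(\sigma) = \#\Big\{ j\ls N : \|x_i-x_j\|\ls \tfrac{\sigma}{N}\Big\} \]
denote the number of points of the sequence lying within distance $\sigma/N$ of $x_i$; note that $\nu_i(\sigma)\gs 1$, since $j=i$ is always counted, which reflects the fact that $R_k^*$ permits repeated indices. The key observation is that in the definition \eqref{rkstardef} the first index $i_1$ acts as a centre, and once it is fixed the conditions on $i_2,\ldots,i_k$ decouple: each $i_{r+1}$ may be chosen independently among the $\nu_{i_1}(s_r)$ indices $j$ with $\|x_{i_1}-x_j\|\ls s_r/N$. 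Summing over the centre yields the identity
\[ R_k^*(s_1,\ldots,s_{k-1},N) = \frac{1}{N}\sum_{i=1}^{N}\prod_{r=1}^{k-1}\nu_i(s_r), \]
which I would establish first. In particular $R_2^*(s,N)=\frac1N\sum_{i=1}^N\nu_i(s)$ and $R_k^*(s,N)=\frac1N\sum_{i=1}^N\nu_i(s)^{k-1}$.

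For part (i), the identity turns the claim $R_2^*(s,N)^{k-1}\ls R_k^*(s,N)$ into
\[ \Big(\frac1N\sum_{i=1}^N\nu_i(s)\Big)^{k-1}\ls \frac1N\sum_{i=1}^N\nu_i(s)^{k-1}, \]
which is precisely the power-mean (Jensen) inequality for the convex function $t\mapsto t^{k-1}$ on $[0,\infty)$, valid since $k-1\gs 1$; this is immediate.

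For part (ii), I would write $m=k-1$ and apply the generalised H\"older inequality with all $m$ exponents equal to $m$ (so that $\sum_{r=1}^{m}1/m=1$) to the functions $i\mapsto \nu_i(s_r)$, $r=1,\ldots,k-1$, on $\{1,\ldots,N\}$ equipped with the normalised counting measure $\frac1N\sum_i$. This gives
\[ \frac1N\sum_{i=1}^N\prod_{r=1}^{k-1}\nu_i(s_r) \ls \prod_{r=1}^{k-1}\Big(\frac1N\sum_{i=1}^N\nu_i(s_r)^{k-1}\Big)^{1/(k-1)}. \]
Raising both sides to the power $k-1$ and using the identity together with $R_k^*(s_r,N)=\frac1N\sum_i\nu_i(s_r)^{k-1}$ gives exactly \eqref{R_k^*_hölder}. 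Should one prefer not to cite the multilinear H\"older inequality directly, it follows by iterating the two-term H\"older inequality already used at the start of this section.

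The only genuine content lies in the decoupling identity of the first step; once $R_k^*$ is written as $\frac1N\sum_i\prod_r\nu_i(s_r)$, both parts are entirely standard. Hence I expect no substantial obstacle beyond setting up this reformulation carefully, and in particular keeping track of the fact that repeated indices are allowed, which is exactly what makes the product structure clean.
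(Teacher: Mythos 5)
Your proposal is correct and follows essentially the same route as the paper: your decoupling identity $R_k^*(s_1,\ldots,s_{k-1},N)=\frac1N\sum_{i\ls N}\prod_{r=1}^{k-1}\nu_i(s_r)$ is exactly the paper's identity \eqref{R_k_easy_form} (with $\nu_i$ in place of $z_i$), and both parts are then settled by the same inequalities. The only cosmetic difference is that for part (i) you invoke Jensen's (power-mean) inequality where the paper applies H\"older with exponents $k-1$ and $(k-1)/(k-2)$, which are equivalent formulations of the same estimate.
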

\begin{proof}
For any $s>0$ and $N\gs 1$ we define
\[z_i(s,N) = \#\Big\{ j\ls N :  \lVert x_i - x_j\rVert \ls \frac{s}{N}  \Big\}, \quad  i \ls N.\] 
Under this notation, we observe that for $k\gs2$ and $s_1,s_2,\ldots,s_{k-1} > 0$ we have
\begin{equation}\label{R_k_easy_form}R_k^*(s_1,\ldots,s_{k-1},N)  = \frac{1}{N}\sum_{i\ls N} z_i(s_1,N)\cdot  \ldots \cdot  z_i(s_{k-1},N).\end{equation}
 For \eqref{R2andRk}, an application of the H\"older inequality with $p=k-1$ and $q=(k-1)/(k-2)$ gives 
\begin{equation*} 
    R_2^*(s,N)^{k-1} = \Big(\frac{1}{N}\sum_{i\ls N}z_i(s,N)\Big)^{k-1} \ls \frac{1}{N}\sum_{i\ls N}z_i(s,N)^{k-1}  = R_k^*(s,N).
\end{equation*}
Applying the Hölder inequality with exponents $p_i =  k-1, ( 1\ls i \ls k-1)$ to \eqref{R_k_easy_form} we obtain \eqref{R_k^*_hölder}.
\end{proof}

\section{Proof of Theorem \ref{main}}

Here we prove that sequences with Poissonian correlations of $k$\,--\,th order are uniformly distributed. 
We argue as in the proof of \cite[Theorem 2]{alp}: if the sequence $\xn$ is not uniformly distributed, then there exists some $a \in (0,1)$  such that relation $\lim\limits_{N \to \infty} \frac{1}{N}\#\{n \ls N: x_i \in [0,a]\}= a$ fails, and by the Bolzano\,--\,Weierstrass theorem there exists a  sequence $(N_j)_{j=1}^{\infty}$ and a number $b\neq a$ such that
 \begin{equation*}
 \lim\limits_{j \to \infty} \frac{1}{N_j}\#\{n \ls N_j : x_i \in [0,a]\} = b .
 \end{equation*}
 We need the following lemma, which tells us that under the assumption of Poissonian correlations of $k$\,--\,th order, the proportion of points in a ball with sufficiently fast shrinking radius has to be asymptotically zero. 
 
\begin{lem} \label{local} Let $k \gs 2.$ If $\xn$ is a sequence with Poissonian $k$\,--\,th correlations and $t\in [0,1],$ then for any $s>0$ we have 
\[ \lim_{N\to \infty} \frac{1}{N}\#\Big\{n\ls N : \|x_n-t\| \ls\frac{s}{2N}   \Big\} = 0.  \]
\end{lem}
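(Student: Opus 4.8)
The plan is to argue by contradiction, using the fact that points clustered near a single center force an artificially large $k$\,--\,th order correlation count. Fix $t \in [0,1]$ and $s > 0$, and write $F(N) = \#\{n \ls N : \|x_n - t\| \ls s/(2N)\}$ for brevity (this is just $F(t,s,N)$ from the introduction). Suppose the claimed limit fails, so that $\limsup_{N\to\infty} F(N)/N = c$ for some $c > 0$. Then there is a strictly increasing sequence $(N_j)_{j\in\bN}$ along which $F(N_j)/N_j \gs c/2$ for all large $j$; in particular $F(N_j) \to \infty$.

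The key geometric observation, which is the analogue of the clique\,--\,counting step in the proof of Proposition \ref{HaukePrinciple} specialised to a single shrinking ball, is that any two points counted by $F(N)$ are mutually close. Indeed, if $\|x_i - t\| \ls s/(2N)$ and $\|x_j - t\| \ls s/(2N)$, the triangle inequality for $\norm{\,\cdot\,}$ gives $\|x_i - x_j\| \ls s/N$. Hence every ordered $k$\,--\,tuple of distinct indices drawn from the $F(N)$ points lying in $B(t, s/(2N))$ automatically satisfies all the constraints $\|x_{i_1} - x_{i_{r+1}}\| \ls s/N$ ($1 \ls r < k$) that define $R_k(s,N)$. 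Since there are exactly $F(N)(F(N)-1)\cdots(F(N)-(k-1))$ such tuples, I obtain the lower bound
\[ R_k(s,N) \gs \frac{1}{N}\, F(N)\big(F(N)-1\big)\cdots\big(F(N)-(k-1)\big). \]

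Now I would pass to the subsequence $(N_j)$. Since $F(N_j) \to \infty$, for all large $j$ every factor satisfies $F(N_j) - i \gs F(N_j)/2$, so the falling factorial is at least $(F(N_j)/2)^k$, and combining with $F(N_j) \gs (c/2)N_j$ yields
\[ R_k(s,N_j) \gs \frac{1}{N_j}\Big(\frac{F(N_j)}{2}\Big)^{k} \gs \Big(\frac{c}{4}\Big)^{k} N_j^{\,k-1} \longrightarrow \infty, \]
because $k \gs 2$. This contradicts the hypothesis of Poissonian $k$\,--\,th correlations, under which $R_k(s,N) \to (2s)^{k-1} < \infty$ and in particular $R_k(s,N_j) \to (2s)^{k-1}$. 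Therefore the assumption $c>0$ is untenable and the limit is $0$.

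I do not anticipate a serious obstacle: the whole argument rests on the single elementary inequality relating $F$ to $R_k$. The only points requiring care are the bookkeeping of the factor $2$ (a ball of radius $s/(2N)$ produces pairwise distances at most $s/N$, which is precisely the scale in $R_k(s,N)$), the extraction of a positive\,--\,density subsequence from the $\limsup$, and checking that the falling\,--\,factorial is comparable to $F(N_j)^k$ once $F(N_j)\to\infty$; all of these are routine.
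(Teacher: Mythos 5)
Your proof is correct and follows essentially the same route as the paper's: both argue by contradiction, observe that a positive proportion of points in the shrinking ball $B(t,s/(2N))$ yields (via the triangle inequality) at least $F(N)(F(N)-1)\cdots(F(N)-(k-1))$ admissible $k$\,--\,tuples, and conclude that $R_k(s,N)$ grows like $N^{k-1}$ along a subsequence, contradicting the finite Poissonian limit $(2s)^{k-1}$. Your version merely makes explicit some details the paper leaves implicit (the triangle\,--\,inequality step and the comparison of the falling factorial with $F(N_j)^k$).
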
 
\begin{proof}
Assume for contradicition that there exist   $t\in [0,1], s>0$ and $\eta>0$ such that 
\[ \frac{1}{N}\#\Big\{n\ls N : \|x_n-t\| \ls\frac{s}{2N} \Big\} \gs \eta   \quad \text{ for inf. many } N\gs 1. \]
For such values of $N\gs 1$ we have 
\begin{align*}
    R_k(s,N) & \gs \frac{1}{N}\eta N \cdot (\eta N -1) \cdot \ldots \cdot (\eta N - (k-1)) \\
    &= \eta^k N^{k-1} + \mathcal{O}(N^{k-2}), \qquad N\to \infty,
\end{align*}
which contradicts the assumption of Poissonian $k$\,--\,th correlations.
\end{proof}
We can now apply Lemma \ref{local} with $t=a$ and $t=0$ to deduce that for $\varepsilon>0$ small enough, we  have
\begin{align*}
\frac{1}{N_j} \#\Big\{n\ls N_j:  \frac{s}{2N_j}  \ls x_n \ls a- \frac{s}{2N_j} \Big\} & =  \frac{1}{N_j}\#\{n \ls N_j : x_i \in [0,a]\} \\
& \qquad - \frac{1}{N_j} \#\Big\{n\ls N_j:   0 \ls  x_n < \frac{s}{2N_j}  \Big\} \\
& \qquad - \frac{1}{N_j} \#\Big\{n\ls N_j:   a- \frac{s}{2N_j} < x_n \ls a \Big\} \\
& \gs b-\varepsilon
\end{align*}
and also
\begin{align*}
\frac{1}{N_j} \#\Big\{n\ls N_j: a + \frac{s}{2N_j} \ls x_n \ls 1- \frac{s}{2N_j} \Big\} & \gs 1- b -\varepsilon
\end{align*}
for all $j\gs 1$ sufficiently large.\par  For these values of $j,$ if $F(t,s,N)$ is the function defined in \eqref{Fdef}, we see that
\begin{align*} \int_0^a F(t,s,N_j)\,\mathrm{d}t & = \int_0^a \sum_{n \ls N}\one_{B(x_n, \frac{s}{2N})}(t)\, \mathrm{d}t = \sum_{n\ls N} \lambda\Big(B(x_n,\frac{s}{2N}) \cap [0,a] \Big) \\
&\gs \frac{s}{N_j} \#\Big\{n\ls N_j:  \frac{s}{2N_j} \ls x_n \ls a- \frac{s}{2N_j} \Big\} \\
& \gs s(b-\varepsilon)  
\end{align*}
 and similarly
\[  \int_a^1 F(t,s,N_j)\mathrm{d}t \gs s(1 - b - \varepsilon). \]
 Applying the Cauchy\,--\,Schwarz inequality we deduce that
\begin{eqnarray*}
 \int_0^1 F(t,s,N_j)^2 \,\mathrm{d}t &  = &  \int_0^a F(t,s,N_j)^2\,\mathrm{d}t   +    \int_a^1 F(t,s,N_j)^2 \,\mathrm{d}t  \\
	&\gs&  \frac{1}{a}\left( \int_0^a F(t,s,N_j)\,\mathrm{d}t\right)^2 + \frac{1}{1-a}\left( \int_a^1 F(t,s,N_j)\,\mathrm{d}t\right)^2 \\
    &\gs & \frac{s^2}{a}(b-\varepsilon)^2 + \frac{s^2}{1-a}(1-b-\varepsilon)^2.
\end{eqnarray*}
Since $a\neq b,$ if we choose $\varepsilon>0$ small enough we have 
\[   \frac{(b-\varepsilon)^2}{a} + \frac{(1-b-\varepsilon)^2}{1-a} = 1 + \delta \quad \text{ for some } \delta > 0 \]
and therefore 
\begin{align} \label{big2ndmoment}
    \int_0^1 F(t,s,N_j)^2 \,\mathrm{d}t \gs (1+\delta)s^2
\end{align}
for all $j\gs 1$ large enough. 
At this point, we present a simple fact that connects the correlation functions $R_2^*(s,N)$ with the function $F(t,s,N)$.

\begin{lem} \label{lemma11} 
For any $s>0$, we have 
\begin{equation} \label{FandR}
    \int_0^1 F(t,s,N)^2\,\mathrm{d}t = \int_0^s R_2^*(\sigma, N)\,\mathrm{d}\sigma . 
 \end{equation}
\end{lem}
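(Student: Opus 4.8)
The plan is to expand both sides as sums over pairs of indices $(i,j)$ with $i,j\ls N$ and to check that the two sums agree. First I would write the counting function as a sum of indicators,
\[
F(t,s,N)=\sum_{n\ls N}\one_{B(x_n,\,s/(2N))}(t),
\]
square it, and integrate over $t\in[0,1]$. Since every term is nonnegative, Tonelli's theorem permits interchanging the finite sum with the integral, giving
\[
\int_0^1 F(t,s,N)^2\,\mathrm{d}t=\sum_{i,j\ls N}\lambda\!\left(B\big(x_i,\tfrac{s}{2N}\big)\cap B\big(x_j,\tfrac{s}{2N}\big)\right),
\]
where $\lambda$ denotes Lebesgue measure on $[0,1]$.

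For the right\,--\,hand side I would use the representation $R_2^*(\sigma,N)=\tfrac1N\sum_{i,j\ls N}\one\big[\|x_i-x_j\|\ls \sigma/N\big]$ and again interchange the finite sum with the integral. For a fixed pair the inner integral is $\int_0^s\one[\sigma\gs N\|x_i-x_j\|]\,\mathrm{d}\sigma=\{s-N\|x_i-x_j\|\}^+$, so that
\[
\int_0^s R_2^*(\sigma,N)\,\mathrm{d}\sigma=\sum_{i,j\ls N}\Big\{\tfrac{s}{N}-\|x_i-x_j\|\Big\}^+.
\]
Comparing the two displays, the lemma reduces to the single geometric identity
\[
\lambda\!\left(B\big(x_i,\tfrac{s}{2N}\big)\cap B\big(x_j,\tfrac{s}{2N}\big)\right)=\Big\{\tfrac{s}{N}-\|x_i-x_j\|\Big\}^+,
\]
to be verified for every pair $(i,j)$.

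This term\,--\,by\,--\,term identity is where the real work lies. Two arcs of common length $s/N$ whose centers lie at distance $d=\|x_i-x_j\|$ apart overlap, along the short side joining the centers, in an arc of length $\max(0,s/N-d)=\{s/N-\|x_i-x_j\|\}^+$, which is exactly the claimed value; summing over all pairs then finishes the proof. The main obstacle I anticipate is the wraparound on the circle $[0,1]$: when $s/N>1/2$ the two arcs can also meet on the \emph{far} side and contribute an additional term $\{s/N-(1-d)\}^+$, which would spoil the equality. I would therefore carry out the overlap computation under the condition $s/(2N)\ls 1/2$, that is, for $N$ large relative to the fixed scale $s$; since in every application $s$ is held fixed while $N\to\infty$, this costs nothing. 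Under that condition one has $1-d\gs 1/2\gs s/N$, so the spurious far\,--\,side term vanishes identically, the geometric identity holds, and summing over all pairs $(i,j)$ yields \eqref{FandR}.
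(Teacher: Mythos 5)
Your argument is essentially the paper's own proof: both sides are expanded over pairs, yielding $\sum_{m,n\ls N}\lambda\big(B(x_m,\tfrac{s}{2N})\cap B(x_n,\tfrac{s}{2N})\big)$ and $\sum_{m,n\ls N}\{\tfrac{s}{N}-\|x_m-x_n\|\}^+$ respectively, which are then identified termwise; your wraparound caveat is a sound observation that the paper passes over silently (the termwise identity, and hence the lemma as literally stated for all $N$, genuinely requires $N$ large relative to $s$, which is harmless in every application). One small correction: your stated condition $s/(2N)\ls 1/2$ only gives $s/N\ls 1$, whereas your step $1-d\gs 1/2\gs s/N$ needs $s/N\ls 1/2$, i.e.\ $N\gs 2s$ — with that condition the far-side term $\{s/N-(1-d)\}^+$ indeed vanishes and the proof is complete.
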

\begin{proof}
By definition of $F(t,s,N)$, we see that
\begin{equation*}\int_0^1 F(t,s,N)^2\,\mathrm{d}t = \sum_{m,n\ls N} \lambda\Big( B(x_m,\frac{s}{2N})\cap B(x_n,\frac{s}{2N}) \Big) = \sum_{m,n\ls N} \Big\{ \frac{s}{N} - \|x_m - x_n\| \Big\}^+, \end{equation*}
while the definition of $R_2^*(s,N)$ gives 
\begin{equation*} \int_0^s R_2^*(\sigma,N)\,\mathrm{d}\sigma = \int_0^s \frac{1}{N}\sum_{m,n\ls N} \one_{[N\|x_m-x_n\|, \infty)}(\sigma)\,\mathrm{d}\sigma = \frac{1}{N}\sum_{m,n\ls N} \Big\{ s - N \|x_m - x_n\| \Big\}^+  \end{equation*}
and these two terms on the right\,--\,hand sides of the equations above are clearly equal. \end{proof}

Combining \eqref{big2ndmoment} with \eqref{FandR} we get 
\begin{equation} \label{lowerbound}
    \int_0^s R_2^*(\sigma,N_j)\,\mathrm{d}\sigma \gs (1 + \delta)s^2, 
\end{equation}
and in turn using  \eqref{lowerbound} with \eqref{R2andRk} and  \eqref{34} we see that 
\begin{align*}
(1+\delta)s^2  & \ls  \int_0^s R_2^*(\sigma,N_j) \,\mathrm{d}\sigma  \ls \int_0^s R_k^*(\sigma,N_j)^{\frac{1}{k-1}} \,\mathrm{d}\sigma = s^2 + \mathcal{O}(s), \quad j\to \infty 
\end{align*}
which is a contradiction for values of $s$ which are sufficiently large. 
 
 \section{ Proof of Theorem  \ref{main2}}
  In this section we present the proof of Theorem \ref{main2}, which generalises Theorem B in the context of $k$\,--\,th order correlations. We first present the properties of the correlation functions $C_k(s,N)$ and $C_k^*(s,N)$ that we use in the proof and then continue with the proof itself.
 
 \subsection{The functions $C_k, C_k^*$.} 
For convenience, when $N\gs 1$ and $1\ls i, j \ls N$ we write 
\begin{equation} \label{lambda_definition} 
\lambda_N(s;i,j) = \lambda\Big(B\big(x_i, \frac{s}{2N} \big)\cap B\big(x_j, \frac{s}{2N} \big) \Big). 
\end{equation}
As shown in the following proposition, the values of $C_k$ and $C_k^*$ can be expressed explicitly in terms of the numbers $\lambda_N(s;i,j).$

\begin{proposition}\label{ck_prop}
The functions $C_k, C_k^*$ satisfy
$$ C_k(s_1,\ldots,s_{k-1},N) = N^{k-2}\hspace{-2mm}\sum_{\substack{i_1,\ldots,i_k\ls N \\ \mathrm{distinct}}}\hspace{-2mm}\lambda_{N}(s_1;i_1,i_2)\lambda_{N}(s_2;i_1,i_3)\ldots \lambda_{N}(s_{k-1};i_1,i_k)\vspace{-3mm} $$
and
$$C_k^{*}(s_1,\ldots,s_{k-1},N) = N^{k-2}\hspace{-2mm}\sum_{i_1,\ldots,i_k\ls N}\hspace{-2mm}\lambda_{N}(s_1;i_1,i_2)\lambda_{N}(s_2;i_1,i_3)\ldots \lambda_{N}(s_{k-1};i_1,i_k). $$
\end{proposition}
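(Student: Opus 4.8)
The plan is to expand each correlation function as a finite sum of products of indicator functions, push the integration over the scales $\sigma_1,\ldots,\sigma_{k-1}$ inside this (finite) sum, and evaluate the resulting one--dimensional integrals explicitly, recognising each of them as a multiple of the overlap quantity $\lambda_N$.

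First I would write, directly from the definition \eqref{kthcorrelationfunction_old_def},
\[ R_k(\sigma_1,\ldots,\sigma_{k-1},N) = \frac{1}{N}\sum_{\substack{i_1,\ldots,i_k\ls N\\ \text{distinct}}}\prod_{r=1}^{k-1}\one\Big\{\|x_{i_1}-x_{i_{r+1}}\|\ls \tfrac{\sigma_r}{N}\Big\}. \]
Substituting this into the definition of $C_k$ and using that the sum is finite (so the interchange of sum and integral is immediate), I would swap the sum with the integral over $B(s_1,\ldots,s_{k-1})$. Since the integrand factorises across the variables $\sigma_1,\ldots,\sigma_{k-1}$, the multiple integral over the rectangle splits into a product of one--dimensional integrals, one for each index $r$.

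The key elementary computation is then
\[ \int_0^{s_r}\one\Big\{\|x_{i_1}-x_{i_{r+1}}\|\ls \tfrac{\sigma_r}{N}\Big\}\,\mathrm{d}\sigma_r = \int_0^{s_r}\one\big\{\sigma_r \gs N\|x_{i_1}-x_{i_{r+1}}\|\big\}\,\mathrm{d}\sigma_r = \big\{s_r - N\|x_{i_1}-x_{i_{r+1}}\|\big\}^+. \]
Recalling from the proof of Lemma \ref{lemma11} that the overlap of two balls satisfies $\lambda_N(s;i,j)=\{s/N-\|x_i-x_j\|\}^+$, the last expression equals $N\,\lambda_N(s_r;i_1,i_{r+1})$. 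Collecting the $k-1$ resulting factors of $N$ against the prefactor $1/N$ in $R_k$ produces exactly $N^{k-2}$, together with the product $\prod_{r=1}^{k-1}\lambda_N(s_r;i_1,i_{r+1})$, which is the claimed formula for $C_k$. The formula for $C_k^*$ follows by repeating the identical argument starting instead from \eqref{rkstardef}, the only difference being that the index tuples $(i_1,\ldots,i_k)$ range over all of $[N]^k$ rather than over distinct tuples.

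There is no genuine obstacle here; the proof is bookkeeping. The two points requiring a little care are the correct accounting of the powers of $N$ (each of the $k-1$ scale integrals contributes one factor of $N$, to be balanced against the single $1/N$ in the definition of $R_k$), and the identification of the elementary integral with $\lambda_N$, which is precisely the overlap--of--two--intervals computation already performed in the proof of Lemma \ref{lemma11} and so may be quoted directly.
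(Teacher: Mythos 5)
Your proof is correct and takes essentially the same route as the paper's: both expand $R_k$ (resp.\ $R_k^*$) as a finite sum of products of indicators, interchange the sum with the integral over the rectangle of scales, and evaluate each one--dimensional integral via the identity $\int_{0}^{s}\one_{[0,\sigma/N]}(\|x_i-x_j\|)\,\mathrm{d}\sigma = \{s-N\|x_i-x_j\|\}^{+} = N\lambda_N(s;i,j)$, with the $k-1$ resulting factors of $N$ against the prefactor $1/N$ producing $N^{k-2}$. Your bookkeeping of the powers of $N$ and the citation of the overlap computation from Lemma \ref{lemma11} match the paper's argument exactly.
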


\begin{proof}
    Note that for any $i, j \ls N$ and $s>0$,
    \[\int_{0}^{s}\!\mathds{1}_{\left[0,\tfrac{\sigma}{N}\right]}\!\left(\lVert x_i - x_j \rVert\right)\mathrm{d}\sigma =
    \left\{s - N\lVert x_i - x_j \rVert\right\}^{+} =
    N\lambda_N(s;i,j).
    \]
Therefore,
\begin{align*}
C_k^{*}(s_1,\ldots,s_{k-1},N) &= \int_{0}^{s_1}\!\!\int_{0}^{s_2}\!\cdots\!\int_{0}^{s_{k-1}} R_k^{*}(\sigma_1,\ldots,\sigma_{k-1},N)\, \mathrm{d}\sigma_1
    \mathrm{d}\sigma_2\ldots \mathrm{d}\sigma_{k-1}
    \\&= \frac{1}{N}\sum_{i_1,\ldots,i_{k}\ls N}\prod_{j=1}^{k-1} \int_{0}^{s_j} \mathds{1}_{\left[0,\tfrac{\sigma}{N}\right]}(\lVert x_1 - x_{j+1}\rVert)\, \mathrm{d}\sigma 
    \\&= \frac{1}{N}\sum_{i_1,\ldots,i_{k}\ls N} \prod_{j=1}^{k-1}N\lambda_N(s_j;i_1,i_{j+1})
    \\&= N^{k-2}\sum_{i_1,\ldots,i_k\ls N} \lambda_{N}(s_1;i_1,i_2)\lambda_{N}(s_2;i_1,i_3)\ldots\lambda_{N}(s_{k-1};i_1,i_k).
    \end{align*}
The proof follows similarly for $C_k(s_1,\ldots,s_{k-1},N)$.     
\end{proof}
The next proposition gives a lower bound for the size of $C_k^{*}$. For convenience, when we deal with $\lambda_N(s;i,j)$ as in \eqref{lambda_definition} and the value of $s>0$ is clear from the context, we  suppress the dependence on $s$ and simply write $\lambda_N(i,j).$
\begin{proposition}\label{characterization_ck} Let $(x_n)_{n \in \mathbb{N}}\subseteq [0,1]$ be a sequence. For any $s>0$, 
\begin{equation*}
C_k^{*}(s,N) \gs s^{2(k-1)} \qquad \text{ for all }\, N\gs 1.
\end{equation*}

\end{proposition}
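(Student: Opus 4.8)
The plan is to start from the explicit formula for $C_k^*$ furnished by Proposition \ref{ck_prop} and exploit the fact that, once the index $i_1$ is fixed, the sum over $i_2,\ldots,i_k$ factors into a product of $k-1$ identical sums. Writing
\[ S_i = S_i(s,N) = \sum_{j\ls N}\lambda_N(s;i,j), \qquad i\ls N, \]
Proposition \ref{ck_prop} gives
\[ C_k^*(s,N) = N^{k-2}\sum_{i_1\ls N}\Big(\sum_{i_2\ls N}\lambda_N(s;i_1,i_2)\Big)^{k-1} = N^{k-2}\sum_{i\ls N}S_i^{\,k-1}. \]
Using $\lambda_N(s;i,j)=\int_0^1 \one_{B(x_i,\frac{s}{2N})}(t)\,\one_{B(x_j,\frac{s}{2N})}(t)\,\mathrm{d}t$ together with the definition \eqref{Fdef} of $F$, one recognises that $S_i=\int_{B(x_i,\frac{s}{2N})}F(t,s,N)\,\mathrm{d}t$, and hence, summing over $i$,
\[ \sum_{i\ls N}S_i=\int_0^1\Big(\sum_{i\ls N}\one_{B(x_i,\frac{s}{2N})}(t)\Big)F(t,s,N)\,\mathrm{d}t=\int_0^1 F(t,s,N)^2\,\mathrm{d}t, \]
which is exactly the second moment appearing in Lemma \ref{lemma11}.

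Next I would apply the power--mean (Jensen) inequality with the exponent $k-1\gs 1$: for nonnegative reals $S_1,\ldots,S_N$,
\[ \sum_{i\ls N}S_i^{\,k-1}\gs N\Big(\tfrac1N\sum_{i\ls N}S_i\Big)^{k-1}=N^{2-k}\Big(\sum_{i\ls N}S_i\Big)^{k-1}. \]
The factor $N^{2-k}$ cancels precisely against the prefactor $N^{k-2}$ in the formula for $C_k^*$, leaving
\[ C_k^*(s,N)\gs \Big(\sum_{i\ls N}S_i\Big)^{k-1}=\Big(\int_0^1 F(t,s,N)^2\,\mathrm{d}t\Big)^{k-1}. \]

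It then remains to bound the second moment of $F$ from below. Since each of the $N$ balls $B(x_i,\frac{s}{2N})$ has Lebesgue measure $s/N$, we have $\int_0^1 F(t,s,N)\,\mathrm{d}t=s$, so the Cauchy--Schwarz inequality applied to $F$ and the constant function $1$ yields $\int_0^1 F^2\,\mathrm{d}t\gs\big(\int_0^1 F\,\mathrm{d}t\big)^2=s^2$. Substituting this into the previous display gives $C_k^*(s,N)\gs (s^2)^{k-1}=s^{2(k-1)}$, as claimed.

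The only genuinely delicate point is the bookkeeping in the first paragraph: correctly factoring the sum of Proposition \ref{ck_prop} over the ``satellite'' indices $i_2,\ldots,i_k$ and matching the resulting quantity $\sum_i S_i$ with the second moment $\int_0^1 F^2$. Once this identification is in place, the conclusion follows from two standard convexity inequalities (Jensen and Cauchy--Schwarz), arranged so that the powers of $N$ cancel exactly and no asymptotic analysis in $N$ is required.
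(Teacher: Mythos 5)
Your proof is correct and essentially identical to the paper's: your factorization-plus-power-mean step is precisely the paper's H\"older inequality (with exponents $(k-1)/(k-2)$ and $k-1$) giving $C_2^*(s,N)^{k-1}\ls C_k^*(s,N)$, and your identity $\sum_{i\ls N}S_i=\int_0^1 F(t,s,N)^2\,\mathrm{d}t$ is exactly the equality $C_2^*(s,N)=\int_0^1 F(t,s,N)^2\,\mathrm{d}t$ from Lemma \ref{lemma11}, after which both arguments conclude by the same Cauchy--Schwarz step. The only caveat, shared with the paper's own proof, is that $\int_0^1 F(t,s,N)\,\mathrm{d}t=s$ implicitly uses $s\ls N$ so that each ball $B\big(x_i,\tfrac{s}{2N}\big)$ has measure $s/N$ modulo $1$.
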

\begin{proof}
By Proposition \ref{ck_prop}, we have
\begin{align*}
    C_k^{*}(s,N) = N^{k-2}\sum_{i_1,\ldots,i_k\ls N}\lambda_N(i_1,i_2)\lambda_N(i_1,i_3)\ldots \lambda_N(i_1,i_k).
\end{align*}
Using the Hölder inequality with $p=(k-1)/(k-2)$ and $q=k-1$, we get
\begin{align*}
    C_2^{*}(s,N)^{k-1} &= \Bigg(\sum_{i_1,i_2\ls N} \lambda_N(i_1,i_2)\Bigg)^{k-1}\\ 
    &\ls N^{k-2}\sum_{i_1 = 1}^{N}\Bigg(\sum_{i_2 = 1}^{N}\lambda_N(i_1,i_2)\Bigg)^{k-1}  = C_k^{*}(s,N).
\end{align*}
On the other hand, if $F(t,s,N)$ is the function defined in \eqref{Fdef}, by Lemma \ref{lemma11} we have \[C_2^{*}(s,N) = \int_0^s R_2^*(\sigma,N)\mathrm{d}\sigma = \int_{0}^{1} F(t,s,N)^2 \mathrm{d}t \gs \left(\int_{0}^{1} F(t,s,N) \mathrm{d}t\right)^2 = s^2,
\]
which implies that for any $N\gs 1$ we have
$C_k^{*}(s,N) \gs s^{2(k-1)} .$ 
\end{proof}
For the  proof of Theorem \ref{main2}, we need to introduce a localised version of the correlation functions $C_k^*(s,N)$.  For any $s>0$ and any interval $A \subseteq [0,1)$ we define the quantity 
\begin{equation} \label{ckrestricted}
C_k^{*}(A;s,N) := N^{k-2}\mathop{\sum_{ i_1,\ldots,i_k\ls N}}_{x_{i_1},\ldots, x_{i_k} \in A}\hspace{-1mm} \lambda_N(i_1,i_2)\lambda_N(i_1,i_3)\ldots\lambda_N(i_1,i_k). \end{equation}
In view of Proposition \ref{ck_prop}, one can think of $C_k^*(A;s,N)$ as a restriction of $C_k^*(s,N)$ on $A$. 
We intend to use the obvious fact that for any partition $(A_j)_{j=1}^{M}$ of the unit interval we have
\begin{equation}\label{C_k_split}
C_k^{*}(s,N) \gs \sum_{j = 1}^M C_k^{*}(A_j;s,N).\end{equation}
The following proposition generalises Proposition \ref{characterization_ck} for the localised versions of the $C_k^*(s,N)$ in the context of Theorem \ref{main2}. 

\begin{proposition}\label{k_th_moment_step}
 Let $G:[0,1]\to\bR $ be an  asymptotic distribution function of $\xn$   and $(N_j)_{j\in\bN}$ be a sequence as in \eqref{G_definition}. Let $k \gs 2$, $A \subseteq [0,1]$ be an interval,  $a := \lambda(A)$ be its Lebesgue measure and $b :=\mu_G(A)$ be its Riemann\,--\,Stieltjes measure with respect to $G$. Then for all $s>0$ we have
 \begin{equation*}
     \liminf_{j \to \infty} C_k^{*}(A,s,N_j) \gs \frac{b^k}{a^{k-1}}s^{2(k-1)}.
 \end{equation*}
\end{proposition}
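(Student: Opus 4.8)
The plan is to imitate the proof of Proposition \ref{characterization_ck}, but with two modifications: all sums are restricted to indices whose points lie in the interval $A$, and the Cauchy\,--\,Schwarz/H\"older estimate is applied to the \emph{localised} second\,--\,order count. First I would introduce the restricted analogue of $C_2^*$, namely
\[ C_2^{*}(A;s,N) = \sum_{\substack{i_1,i_2\ls N \\ x_{i_1},x_{i_2}\in A}} \lambda_N(i_1,i_2), \]
and record the restricted analogue of the H\"older inequality used in Proposition \ref{characterization_ck}: applying H\"older with $p=(k-1)/(k-2)$ and $q=k-1$ to the outer sum over $i_1$ with $x_{i_1}\in A$ (whose number of terms is $\#\{i_1\ls N : x_{i_1}\in A\}$) gives
\[ C_2^{*}(A;s,N)^{k-1} \ls \big(\#\{i_1\ls N : x_{i_1}\in A\}\big)^{k-2}\, C_k^{*}(A;s,N). \]
Here the inner sum $\sum_{i_2 : x_{i_2}\in A}\lambda_N(i_1,i_2)$ is raised to the power $k-1$, matching the $k-1$ factors $\lambda_N(i_1,i_j)$ in the definition \eqref{ckrestricted} of $C_k^*(A;s,N)$.

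Next I would obtain a lower bound for $C_2^{*}(A;s,N)$ via a localised version of Lemma \ref{lemma11}. Writing $F_A(t,s,N)=\#\{n\ls N : x_n\in A,\ \|x_n-t\|\ls s/(2N)\}$, the same computation as in Lemma \ref{lemma11} shows $C_2^{*}(A;s,N) = \int_0^1 F_A(t,s,N)^2\,\mathrm{d}t$, and then Cauchy\,--\,Schwarz over $t\in[0,1]$ gives
\[ C_2^{*}(A;s,N) \gs \Big(\int_0^1 F_A(t,s,N)\,\mathrm{d}t\Big)^2. \]
Since $\int_0^1 F_A(t,s,N)\,\mathrm{d}t = \sum_{n:x_n\in A}\lambda(B(x_n,\tfrac{s}{2N})) = \tfrac{s}{N}\#\{n\ls N: x_n\in A\}$ (at least for the overwhelming majority of points, those lying well inside $A$, the boundary contributing a negligible amount), we get
\[ C_2^{*}(A;s,N) \gs \frac{s^2}{N^2}\big(\#\{n\ls N : x_n\in A\}\big)^2. \]
Combining this with the H\"older estimate above and cancelling, I would arrive at
\[ C_k^{*}(A;s,N) \gs \frac{s^{2(k-1)}}{N^{2(k-1)}}\,\big(\#\{n\ls N : x_n\in A\}\big)^{2(k-1)}\cdot\frac{1}{\big(\#\{n\ls N:x_n\in A\}\big)^{k-2}} = \frac{s^{2(k-1)}\,\big(\#\{n\ls N : x_n\in A\}\big)^{k}}{N^{2(k-1)}\, N^{?}}, \]
so I must keep careful track of the powers of $N$: the prefactor $N^{k-2}$ in \eqref{ckrestricted} should combine with the $N^{-2(k-1)}$ coming from the $\lambda_N$-factors to leave exactly the scaling $(\#\{n:x_n\in A\}/N)^{k}\cdot s^{2(k-1)}$.

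Finally I would pass to the limit along $(N_j)$. By definition of $G$ and the fact that $b=\mu_G(A)$ is the Riemann\,--\,Stieltjes measure of $A$, we have
\[ \lim_{j\to\infty}\frac{1}{N_j}\#\{n\ls N_j : x_n\in A\} = b, \qquad a=\lambda(A), \]
so that the normalised count tends to $b$ while each localised interval of length $a$ carries the measure $a$ used in the H\"older step (the factor $\big(\#\{i_1:x_{i_1}\in A\}/N\big)^{k-2}$ that I divided out converges to $b^{k-2}$, not $a^{k-2}$). To recover the claimed bound $b^k/a^{k-1}$ I expect to need the \emph{length} $a$ of $A$ rather than its $G$-mass in the H\"older denominator; this is the crux and the main obstacle. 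The resolution is that in the Cauchy\,--\,Schwarz step over $t$ one should integrate only over $t\in A$ (an interval of length $a$), giving $C_2^{*}(A;s,N)\gs \tfrac1a\big(\int_A F_A\big)^2$, and in the H\"older step over $i_1$ one genuinely needs the factor $a^{k-2}$ to appear, which forces rewriting the argument so that the geometric length $a$ enters through the integration domain while the $G$-mass $b$ enters through the point count. Taking $\liminf_{j\to\infty}$ and using that all the factors converge (the count to $b$, the lengths being fixed at $a$) then yields
\[ \liminf_{j\to\infty} C_k^{*}(A;s,N_j) \gs \frac{b^k}{a^{k-1}}\,s^{2(k-1)}, \]
as required. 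The delicate points to verify are the exact bookkeeping of the powers of $N$, the negligibility of boundary contributions of $A$ (points within $s/(2N)$ of $\partial A$), and ensuring the correct split of roles between $a$ and $b$ in the two applications of the H\"older/Cauchy\,--\,Schwarz inequalities.
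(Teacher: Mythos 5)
Your strategy---localising the proof of Proposition \ref{characterization_ck} directly, rather than (as the paper does) passing to the subsequence of points in $A$, rescaling it by $1/a$ to a sequence in $[0,1]$, and invoking Proposition \ref{characterization_ck} as a black box for the rescaled sequence at the scale $s(b-\varepsilon)/a$---is viable, and once the bookkeeping is done it produces exactly the claimed constant. The correct accounting is as follows: since \eqref{ckrestricted} carries the normalisation $N^{k-2}$, H\"older over $i_1$ gives
\begin{equation*}
C_2^{*}(A;s,N)^{k-1} \ls \frac{K_N^{k-2}}{N^{k-2}}\, C_k^{*}(A;s,N), \qquad K_N := \#\{n \ls N : x_n \in A\},
\end{equation*}
so the H\"older step contributes the \emph{point count} $K_N^{k-2}$; your closing remark that ``one genuinely needs the factor $a^{k-2}$ to appear in the H\"older step'' is a misdiagnosis. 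The entire factor $a^{-(k-1)}$ must come from the single Cauchy--Schwarz in $t$ over an interval of length $\approx a$, raised to the power $k-1$: combining the display above with $C_2^{*}(A;s,N) \gs \frac{1}{a}\big(\int_A F_A\big)^2$ gives $C_k^{*}(A;s,N) \gs s^{2(k-1)}a^{-(k-1)}(K_N/N)^{k}$ up to the boundary defect discussed below, and $K_{N_j}/N_j \to b$ along \eqref{G_definition} (the same implicit use the paper makes via \eqref{eps_close}) finishes the proof. Note also that your worry about boundary terms in $\int_0^1 F_A\,\mathrm{d}t$ was unnecessary: since balls are taken modulo $1$, $\int_0^1 F_A(t,s,N)\,\mathrm{d}t = \frac{s}{N}K_N$ holds \emph{exactly}.

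The genuine gap is in the refined step, where you integrate over $t \in A$ and assert the boundary contribution is negligible. There one has $\int_A F_A(t,s,N)\,\mathrm{d}t = \sum_{n:\, x_n \in A}\lambda\big(B(x_n,\tfrac{s}{2N})\cap A\big)$, and a point within $s/(2N)$ of $\partial A$ contributes as little as $\tfrac{s}{2N}$ instead of $\tfrac{s}{N}$; nothing in the hypotheses makes such points rare. If $\mu_G$ has an atom at an endpoint $c$ of $A$ (say $x_n = c$ for a positive proportion of $n$---perfectly compatible with $G$ being an asymptotic distribution function, and exactly the situation part (i) of Theorem \ref{main2} must cover), then a positive proportion of the summands are halved, and in the worst case your method only yields $\liminf_j C_k^{*}(A;s,N_j) \gs 4^{-(k-1)}\,b^k a^{-(k-1)} s^{2(k-1)}$. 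Since the exact constant $b^k/a^{k-1}$ is what feeds the martingale argument producing $\int_0^1 g(x)^k\,\mathrm{d}x$ in Theorem \ref{main2}(ii), this loss is substantive, not cosmetic. The repair is a one-line change you did not find: integrate instead over the $\tfrac{s}{2N}$-neighbourhood $A^{+}$ of $A$. Then $B(x_n,\tfrac{s}{2N}) \subseteq A^{+}$ for every $x_n \in A$, so $\int_{A^{+}} F_A\,\mathrm{d}t = \tfrac{s}{N}K_N$ exactly, while $\lambda(A^{+}) = a + \tfrac{s}{N} \to a$, and Cauchy--Schwarz over $A^{+}$ restores the sharp constant with no case analysis. (Your instinct that the boundary is the crux is sound---indeed the paper's own rescaling step quietly meets the same phenomenon, since points near opposite ends of $A$ become wrap-around neighbours on the rescaled circle, so the displayed equality of intersection measures there is really only ``$\ls$'' for such pairs---but flagging the issue as ``to verify'' and asserting negligibility is where your proposal stops short of a proof.)
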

\begin{proof}
Let $s >0$ arbitrary and assume without loss of generality that $A = [0,a]$.
Let $(x_{r_n})_{n=1}^{\infty}$ denote the subsequence of  $(x_n)_{n=1}^{\infty}$ consisting of all terms that lie in $[0,a]$ and define the sequence $(z_n)_{n\in\bN}$ by 
 $$z_n  = \frac{1}{a}x_{r_n}, \qquad n\gs 1.$$
 We wish to establish a relation between $C_k^*([0,a],s,N)$ defined in \eqref{ckrestricted} and the correlation counting function $C_k^*(s,N)$ relevant to the sequence $(z_n)_{n\in\bN}.$ Since we need to specify to which sequence the  counting function refers to, from now on we write  $C_k^*( (z_n)_{n\in\bN}, s, N)$ for the function $C_k^*$ that refers to $(z_n)_{n\in\bN}$, and when we do not state which sequence the correlation function $C_k^*$ refers to, it will be understood that it refers to $(x_n)_{n\in\bN}.$ \par 

  We define
 \[K_{N} = \#\{i \ls N: x_i \in [0,a]\}, \qquad N\gs 1. \]
 Let $\varepsilon >0$. By the definitions of $a$ and $b$, there exists  $J\gs 1$ such that for all $j \gs J,$
 \begin{equation}\label{eps_close}
 (b-\varepsilon)N_j \,\, < \,\, \#\{i \ls N_j: x_i \in [0,a]\} \,\, <\,\, (b+\varepsilon)N_j. 
 \end{equation}
 Then for any $s>0$ we have
 \begin{align}
C_{k}^{*}\left([0,a],s,N_j\right) &=
N_j^{k-2}\mathop{\sum_{i_1,\ldots,i_k\ls N_j}}_{ x_{i_1},\ldots,x_{i_k} \ls a} \prod_{2\ls p \ls k} \lambda\Big(B\Big(x_{i_1},\frac{s}{2N_j}\Big)\cap B\Big(x_{i_p},\frac{s}{2N_j}\Big)\Big)\nonumber \\
 &= N_j^{k-2} \hspace{-2mm}  \sum_{ i_1,\ldots, i_k \ls K_{N_j}}  \prod_{2\ls p \ls k}  \lambda\Big(B\Big(az_{i_1},\frac{ s}{2 N_j}\Big)\cap B\Big(az_{i_p},\frac{  s}{2 N_j}\Big)\Big). \label{C_k_0,a} \end{align}
At this point, we notice that when  $N_j$ is sufficiently large, the measure of the intersections appearing in the right-hand side of \eqref{C_k_0,a} is
 \begin{align*} 
 \lambda\Big(B\Big(az_{i_1},\frac{s}{2N_j}\Big)\cap B\Big(az_{i_p},\frac{ s}{2 N_j}\Big)\Big)   = a\lambda\Big(B\Big(z_{i_1},\frac{ s}{2a N_j}\Big)\cap B\Big(z_{i_p},\frac{s}{2a N_j}\Big)\Big)& \\
 \stackrel{\eqref{eps_close}}\gs a\lambda\Big(B\Big(z_{i_1},\frac{s(b-\varepsilon)}{2aK_{N_j}}\Big)\cap B\Big(z_{i_p},\frac{s(b-\varepsilon)}{2aK_{N_j}}\Big)\Big).&
 \end{align*}
 Inserting this into \eqref{C_k_0,a} gives
 \begin{align*}
  C_{k}^{*}\left([0,a],s,N_j\right) &\gs N_j^{k-2} \hspace{-4mm} \sum_{ i_1,\ldots, i_k \ls K_{N_j}} \hspace{-4mm} a^{k-1}\hspace{-2mm} \prod_{2\ls p \ls k}\lambda\Big(B\Big(z_{i_1},\frac{s(b-\varepsilon)}{2aK_{N_j}}\Big)\cap B\Big(z_{i_p},\frac{s(b-\varepsilon)}{2aK_{N_j}}\Big)\Big) \\
 &=  a^{k-1} \frac{N_j^{k-2}}{K_{N_j}^{k-2}} \cdot C_k^*\big((z_n)_{n\in\bN}; s(b-\varepsilon) a^{-1} , K_{N_j}\big) \\
 &\stackrel{\eqref{eps_close}}\gs   \frac{a^{k-1}}{(b+\varepsilon)^{k-2}}  C_k^*\big((z_n)_{n\in\bN}; s(b-\varepsilon)a^{-1} , K_{N_j}\big).
 \end{align*}
 We now use Proposition \ref{characterization_ck} for $C_k^*\big((z_n)_{n\in\bN}; (b-\varepsilon)s a^{-1} , K_{N_j}\big) $ to deduce that
 \[ C_{k}^{*}\left([0,a],s,N_j\right) \gs \frac{a^{k-1}}{(b+\varepsilon)^{k-2}} \cdot \frac{(b-\varepsilon)^{2(k-1)}}{a^{2(k-1)}} \, s^{2(k-1)}\]
for all  $j\gs J$.  Since $\varepsilon>0$ can be chosen arbitrarily small, this finally implies 
 \[ 
 \liminf_{j \to \infty} C_{k}^{*}\left([0,a],s,N_j\right) \gs
 \frac{b^k}{a^{k-1}}s^{2(k-1)}.\]
\end{proof}
 
\subsection{Proof of Theorem \ref{main2}}  For each $r\gs 1$ we define the intervals 
\begin{equation}\label{intervals_B} B_{r,i} = \Big[ \frac{i}{2^r}, \frac{i+1}{2^r} \Big), \qquad 0 \ls i \ls 2^r -1.\end{equation}
Applying Proposition \ref{k_th_moment_step} and \eqref{C_k_split} to the partition $(B_{r,i})_{i=0}^{2^r-1},$ we deduce that for any $s > 0$
 \begin{align}\label{basis} 
 \frac{\liminf_{j\to\infty} C_k^*(s,N_j)}{s^{2(k-1)}} &\gs  \sum_{i=0}^{2^r-1} 2^{r(k-1)}
 \left(G\Big(\frac{i+1}{2^r}\Big) - G\Big(\frac{i}{2^r}\Big)\right)^k.
 \end{align}
 We now  consider two cases regarding the function $G.$ \newline 
$\text{(i)}$ If $G$ is not absolutely continuous,  there exists a fixed $\varepsilon> 0$  such that for any $\delta > 0$, there exist $M = M_{\delta} \in \mathbb{N}$ many pairwise disjoint intervals
    $I_1,\ldots, I_{M} \subseteq (0,1)$ with $I_j = (a_j,b_j), j = 1,\ldots,M$
    such that 
    \begin{equation*}
    \sum_{j = 1}^{M}(b_j -a_j) < \frac{\delta}{2},\qquad 
    \sum_{j = 1}^{M} (G(b_j) -G(a_j)) > \varepsilon.
    \end{equation*}
    For $r \in \mathbb{N}$, we define 
    \[a_{j,r} = \frac{\lfloor 2^r a_j\rfloor}{2^r}, \qquad b_{j,r} = \frac{\lceil 2^r b_j\rceil}{2^r}, \qquad
    I_{j,r} = (a_{j,r},b_{j,r}).\]
    In other words, $I_{j,r}$ is the smallest interval of the form $(\frac{m}{2^r}, \frac{n}{2^r})$ with $m,n \in \mathbb{N}$
    that contains $I_j$. We choose $r = r(\delta)\gs 1$  large enough so that the intervals $I_{j,r},\, j = 1,\ldots,M$ are still pairwise disjoint, contained in $(0,1)$ and also $2^r > 4M/\delta$. We then have
    \begin{equation}\label{still_smaller_del}
    \sum_{j = 1}^{M} (b_{j,r} -a_{j,r}) \ls
    \sum_{j = 1}^{M} \left(b_{j} -a_{j} + \frac{2}{2^r}\right)
    < \frac{\delta}{2} + \frac{\delta}{2} = \delta. \end{equation}    
  Let $J_{r,\delta} \subseteq \{0,\ldots,2^r -1\}$ be the index set such that
    \[\bigcup_{i \in J_{r,\delta}} B_{r,i} = \bigcup_{j = 1}^{M} I_{j,r}.\]
    Using \eqref{still_smaller_del}, we see that 
    \begin{equation}\label{I_small}\lvert J_{r,\delta} \rvert <  2^r \delta.\end{equation}
    Additionally, since $G$ is non-decreasing, we still have
    \begin{equation}\label{still_larger_eps}\sum_{i \in J_{r,\delta}} \left(G\Big(\frac{i+1}{2^r}\Big) - G\Big(\frac{i}{2^r}\Big)\right) > \varepsilon.\end{equation}
Combining \eqref{basis} with \eqref{still_larger_eps}, applying the H\"{o}lder inequality in the form $\left(\sum_{i \in J} x_i\right)^k \ls \lvert J \rvert^{k-1}\sum_{i \in J} x_i^k$ and using \eqref{I_small} we obtain
 \begin{align*}
 \frac{\liminf_{j\to\infty} C_k^*(s,N_j)}{s^{2(k-1)}} &\gs   2^{r(k-1)} \sum_{i \in J_{r,\delta}}\!
 \left(\!G\Big(\frac{i+1}{2^r}\Big) - G\Big(\frac{i}{2^r}\Big)\!\right)^k \\
 &\gs \frac{2^{r(k-1)}}{\lvert J_{r,\delta}\rvert^{k-1}} \Bigg(\sum_{i \in J_{r,\delta}}\!\! \Big(\!G\Big(\frac{i+1}{2^r}\Big) - G\Big(\frac{i}{2^r}\Big)\!\Big)\Bigg)^k
 \gs \frac{\varepsilon^k}{\delta^{k-1}}\cdot 
 \end{align*}
   As $\varepsilon > 0$ is fixed and $\delta>0$ can be chosen arbitrarily small, we obtain that for any $s > 0,$  $\liminf_{j\to\infty} C_k^*(s,N_j) = \infty,$ or equivalently $\lim_{j\to\infty}C_k^*(s,N_j) = \infty.$ 
  By monotonicity of $R_k^{*},$ this immediately implies that also $ \lim_{j\to\infty} R_k^{*}(s, N_j) = \infty.$ \par We will now show that $\lim_{j\to\infty}R_k(s,N) = \infty. $ Letting $z_i=z_i(s,N)$ be as in the proof of Proposition \ref{R_k^*_hölder_prop}, for all $m<k$ and $N\gs 1$ we have
  \begin{align*}R_m(s,N) &= \frac{1}{N}\sum_{i \ls N} (z_i-1)(z_i-2)\cdot\ldots\cdot(z_i-(m-1))
  \\ &= \frac{1}{N}\mathop{\sum_{i \ls N, z_i \gs k}}\hspace{-2mm} (z_i-1) \cdot\ldots\cdot(z_i-(m-1))
  + \frac{1}{N} \mathop{\sum_{i \ls N, z_i < k}}\hspace{-2mm} (z_i-1) \cdot\ldots\cdot(z_i-(m-1))
  \\ &\ls  \frac{1}{N}\mathop{\sum_{i \ls N, z_i \gs k}}  (z_i-1)\cdot\ldots\cdot(z_i-(m-1))
  + k^m \\ &\ls R_k(s,N) + k^m.
  \end{align*}
  Together with Proposition \ref{star_lower_orders}, this gives
  \begin{equation}\label{new_ineq}R_k^{*}(s,N) \ls (1 + b_{k-1} + \ldots + b_1)R_k(s,N) + \sum_{m = 1}^{k-1}b_{m} k^m.\end{equation}
  Since $ \lim_{j\to\infty} R_k^{*}(s, N_j) = \infty,$ \eqref{new_ineq} immediately shows that also 
  \[ \lim_{j \to \infty} R_k(s,N_j) = \infty,\] as required. \newline
 $\text{(ii)}$ We now consider the case when $G$ is absolutely continuous, and write $g$ for the function as in the hypothesis of Theorem \ref{main2}. We follow the method used in \cite{alp} to prove Theorem B. We first assume that $g^k$ is integrable.
 For each $r\gs 1,$ let $\mathcal{F}_r$ be the $\sigma$\,--\,algebra generated by the intervals $B_{r,i},\; 0 \ls i \ls 2^r -1$ defined in \eqref{intervals_B}.
 By the hypothesis, for any $r\gs 1$ and $i=0,1,\ldots, 2^r-1$ we have 
 \[ \lim_{j\to\infty}\frac{1}{N_j}\#\{n\ls N_j : x_n \in B_{r,i} \} = G\Big(\frac{i+1}{2^r}\Big) - G\Big(\frac{i}{2^r}\Big) = \int_{B_{r,i}}g(x)\mathrm{d}x. \]
Inequality \eqref{basis} implies that for any $s > 0$
 \[ \frac{\liminf_{j\to\infty} C_k^*(s,N_j)}{s^{2(k-1)}} \gs  \sum_{i=0}^{2^r-1} 2^{r(k-1)}\left( \int_{B_{r,i}}\hspace{-2mm}g(x)\mathrm{d}x \right)^k = \int_0^1 \mathbb{E}\!\left[g\vert \mathcal{F}_r\right]\!(x)^k\mathrm{d}x. \]
 (Here the function $\mathbb{E}\left[g\vert \mathcal{F}_r\right]$ is the conditional expectation of $g$ with respect to the $\sigma$\,--\,algebra $\mathcal{F}_r,$ for the definition we refer to \cite[p. 121]{ergodic}.)  
At this point, we observe that $\mathcal{F}_r \subseteq \mathcal{F}_{r+1}$ for all $r\gs 1$ and the $\sigma$\,--\,algebra generated by $\bigcup\limits_{r=1}^\infty \mathcal{F}_r$ is the Borel $\sigma$\,--\,algebra on $[0,1].$ Since $g$ has a finite $k$\,--\,th moment, we can apply the martingale convergence theorem \cite[Thm 5.5]{ergodic} to deduce
\[\lim_{r \to \infty} \int_0^1 \mathbb{E}\!\left[g\vert \mathcal{F}_r\right]\!(x)^k\mathrm{d}x =  \int_{0}^{1} g(x)^k \, \mathrm{d}x,\]
so we obtain
\begin{equation}\label{C_k_k_thmoment}\frac{\liminf_{j \to \infty} C_k^{*}(s,N_j)}{s^{2(k-1)}} \gs \int_{0}^{1} g(x)^k \,\mathrm{d}x.\end{equation}
Applying Proposition \ref{R_k^*_hölder_prop}, we see that 
    \begin{align}
    \liminf_{j \to \infty} C_k^{*}(s,N_j) &\ls \limsup_{j \to \infty}\int_{[0,s]^{k-1}} R_k^{*}(t_1,\ldots,t_{k-1},N_j) \,\mathrm{d}t_1 \ldots \mathrm{d}t_{k-1} \nonumber
    \\&\ls \limsup_{j \to \infty}\int_{[0,s]^{k-1}} R_k^{*}(t_1,N_j)^{\frac{1}{k-1}} \cdots R_k^{*}(t_{k-1},N_j)^{\frac{1}{k-1}} \,\mathrm{d}t_1 \ldots \mathrm{d}t_{k-1} \label{applied_hölder_thm2}
    \\&= \limsup_{j \to \infty}\left(\int_{0}^s  R_k^{*}(t,N_j)^{1/(k-1)} \,\mathrm{d}t\right)^{k-1}. \nonumber
    \end{align}
We will first prove that
\begin{equation}\label{alternative_statement}\limsup_{s \to \infty} \frac{\limsup_{j \to \infty}R_k^{*}(s,N_j)}{(2s)^{k-1}}\gs  \int_{0}^{1} g(x)^k \,\mathrm{d}x\end{equation}
and prove the same with $R_k$ in place of $R_k^{*}$ later. Assume for contradiction that there exists an $\varepsilon > 0$ and some $S_0=S_0(\varepsilon) \in \mathbb{N}$ such that for all $s > S_0,$ 
\begin{equation}\label{contr_thm2}\limsup_{j \to \infty} R_k^{*}(s,N_j) < {(2s)^{k-1}}\left(\int_{0}^{1} g(x)^k \,\mathrm{d}x - \varepsilon\right).\end{equation}
Raising both sides in \eqref{contr_thm2} to the power of $1/(k-1)$ and integrating, we see that for all $s>S_0$ we have
\begin{align} 
    \int_0^s\!\!\limsup_{j\to\infty} R_k^*(\sigma,N_j)^{\frac{1}{k-1}}\mathrm{d}\sigma & =   \int_{S_0}^s \limsup_{j\to\infty} R_k^*(\sigma,N_j)^{\frac{1}{k-1}}\mathrm{d}\sigma +\mathcal{O}(1) \nonumber \\
& \ls s^2\left(\int_{0}^{1} g(x)^k \,\mathrm{d}x -\varepsilon\right)^{\frac{1}{k-1} } + \mathcal{O}(1). \label{int2}
\end{align}
The $\mathcal{O}(1)$ term here comes from the fact that $\limsup_{j\to\infty}R_k^*(\sigma,N_j)$ is bounded in the range $(0,S_0].$
Combining \eqref{C_k_k_thmoment}, \eqref{applied_hölder_thm2} and \eqref{int2} and applying the reverse Fatou  Lemma, we obtain
\begin{align*}s^2\left(\int_{0}^{1} g(x)^k \,\mathrm{d}x\right)^{\frac{1}{k-1}}
&\ls  \liminf_{j \to \infty} C_k^{*}(s,N_j)^{\frac{1}{k-1}} \\ & \ls \limsup_{j \to \infty} \int_{0}^s R_k^{*}(\sigma,N_j)^{1/(k-1)}\,\mathrm{d}\sigma
\\&\ls\int_{0}^s \limsup_{j \to \infty} R_k^{*}(\sigma,N_j)^{1/(k-1)}\,\mathrm{d}\sigma \\
&\ls s^2 \left(\int_{0}^{1} g(x)^k \,\mathrm{d}x - \varepsilon\right)^{\frac{1}{k-1}} + \mathcal{O}(1),
\end{align*}
a contradiction when $s \to \infty$. \par 
To prove \eqref{alternative_statement} with $R_k$ instead of $R_k^{*}$, we argue as follows: assume for contradiction that 
\begin{equation}\label{contr_thm2_error}\limsup_{s \to \infty} \frac{\limsup_{j \to \infty} R_k(s,N_j)}{(2s)^{k-1}}< \int_{0}^{1} g(x)^k \,\mathrm{d}x. \end{equation}
Combining \eqref{alternative_statement} with \eqref{contr_thm2_error},  we deduce that there exists some $\delta > 0$ such that
\begin{equation*}
\limsup_{s\to \infty} \frac{\limsup_{j\to \infty}(R_k^*(s,N_j) - R_k(s,N)) }{(2s)^{k-1}} \gs  \delta \int_{0}^{1} g(x)^k \,\mathrm{d}x.
\end{equation*}
In view of \eqref{R_k*_in_terms_of_R_k}, this implies that
\[\limsup_{s \to \infty} \frac{1}{s}\frac{\limsup_{j \to \infty} R_k(3^ks,N_j)}{(2s)^{k-1}} \gs   M \,\int_{0}^{1} g(x)^k \,\mathrm{d}x,\]
where the constant $M>0$ depends on $k$; 
a contradiction to \eqref{contr_thm2_error}.
 
If $g$ does not have a finite $k$\,--\,th moment, we approximate $g$ by truncations 
\[g_\ell(x) := \begin{cases} g(x), &\text{ if } g(x) \ls \ell\\
0,  &\text{ if } g(x) > \ell. \end{cases} \]
We can apply the arguments from above to show that for any $\ell \in \mathbb{N},$
\[\limsup_{s \to \infty} \frac{\limsup_{j \to \infty}R_k(s,N_j)}{(2s)^{k-1}}\gs  \int_{0}^{1} g_{\ell}(x)^k \,\mathrm{d}x.\]
Since $\int_{0}^1 g_{\ell}(x)^k \, \mathrm{d}x$ can be made arbitrarily large, the result follows.

\section{Proof of Theorems \ref{thm3} and  \ref{ap_thm} }
The proof of Theorem \ref{thm3} follows straightforwardly from Proposition \ref{HaukePrinciple}. Assume $s_1,\ldots, s_{k-1}>0$ are such that 
\[ \limsup_{N\to\infty} R_k(s_1,\ldots,s_{k-1},N) = \infty. \]
If $p>k$, then by \eqref{conclusion} for any $s$ large enough with respect to $k$ and $s_1,\ldots, s_{k-1}$ we have $\limsup_{N\to\infty}R_p(s,N)=\infty.$ Therefore the sequence $\xn$ cannot have Poissonian correlations of order $p$. 

Turning to Theorem \ref{ap_thm}, statement (i) follows by Bourgain's construction in \cite[Appendix]{all} of a subset $\mathcal{A}=(a_n)_{n\in \bN}$ of the positive integers such that $ E(\mathcal{A}_N)= o(N^3), N\to \infty$ and for almost all $x\in[0,1]$ the sequence $(a_n x)_{n\in\bN}$ satisfies $$\limsup_{N\to\infty}R_2(1,N) = \infty.  $$
By Theorem \ref{thm3}, for almost all $x\in [0,1]$ the sequence $(a_n x)_{n=1}^{\infty}$ does not have Poissonian correlations of any order $k\gs 2.$ \par 
For statement (ii), within the context of Theorem \ref{ap_thm} we write $R_k(s,N,x)$ for the $k$\,--\,th correlation function $R_k(s,N)$ of the sequence $(a_n x)_{n\in\bN}.$  Also given any finite set $A$, we write 
\[T(A) := \# \{(a,b,c) \in A^3:  a-b = b-c \neq 0\}.\]
\par 
 We shall make use of a result in additive combinatorics, which states that whenever the additive energy of a set $A$ is $E(A) \gs \kappa_1 |A|^3$ then $T(A) \gs \kappa_2|A|^2$; the constant $\kappa_2>0$ only depends on $\kappa_1>0.$ 
(see e.g. \cite[Theorem 6.1]{pr}).

Therefore the assumption of Theorem \ref{ap_thm} implies that 
   \[ T(\mathcal{A}_N) \gs c N^2 \qquad \text{ for infinitely many } N\gs 1,  \]
where $c > 0$ is a constant. \par

    Let $(i,j,k) \in \mathcal{A}_N^3$ be a 3\,--\,term arithmetic progression with distance $d \gs 1$. Then we have
    $\lVert j\alpha - i\alpha \rVert \ls \frac{s}{N}$ and  $\lVert k\alpha - j\alpha \rVert \ls \frac{s}{N}$
    if and only if
    $\lVert d\alpha \rVert \ls \frac{s}{N}$, which happens for $\alpha$ on a set of measure $2s/N,$ independent of the value of $d\gs 1.$
    Observe that for infinitely many values of $N\gs 1$, we have
    \begin{align*}
        \int_{0}^{1} R_3(s,N,\alpha) \,\mathrm{d}\alpha & =  
        \sum_{\substack{i, j, k \in \mathcal{A}_N \\\text{distinct}}} \int_{0}^{1} \frac{1}{N} \mathds{1}_{[\lVert j\alpha - i\alpha \rVert \ls \frac{s}{N}, \lVert k\alpha - j\alpha \rVert \ls \frac{s}{N}]}(\alpha) \,\mathrm{d}\alpha\\
        &\gs \frac{1}{N}\hspace{-4mm}\sum_{\substack{(i,j,k) \text{ is a} \\ \text{non-trivial 3-AP}}}\hspace{-3mm}\frac{2s}{N}\,\, =\,\, 2s\frac{T(\mathcal{A}_N)}{N^2}\,\, \gs \,\, 2sc.
    \end{align*}
 For $s$ sufficiently small, we have   $2sc > 4s^2,$ so 
    \[ \limsup_{N \to \infty} \int_{0}^{1} R_3(s,N,\alpha) \,\mathrm{d}\alpha > 4s^2.\]
    By the reverse Fatou Lemma, 
    \[\limsup_{N \to \infty} \int_{0}^{1} R_3(s,N,\alpha) \,\mathrm{d}\alpha
    \ls  \int_{0}^{1} \limsup_{N \to \infty} R_3(s,N,\alpha) \,\mathrm{d}\alpha \]
    which implies that there must be a set $\Omega \subseteq [0,1]$ with positive Lebesgue measure such that for $\alpha \in \Omega,$
    \[\limsup_{N \to \infty} R_3(s,N,\alpha) > 4s^2.\]

\section{Proof of Theorem \ref{connection_to_higher_moments}}

In the proof of Theorem \ref{connection_to_higher_moments} we make use of the test functions $g^{(k)}_s:\mathbb{R}^{k-1}\to\mathbb{R}$ defined for every $k\gs 2$ and $s>0$ by
\begin{equation*}
    g^{(k)}_s(y_1,\ldots,y_{k-1}) := \Big\{s - \max_{1\ls i < k}\{y_i\}^{+} - \max_{1\ls i < k}\{-y_i\}^+ \Big\}^{+}.
\end{equation*}
Then  $g^{(k)}_s \in C_c(\mathbb{R}^{k-1})$. 
The importance of the test functions $g_s^{(k)}$ is seen from the following lemma. For convenience, when $s>0$ and $N\gs 1$ are fixed, given  an index $i \ls N$ we write $B_{i} = B\big(x_i,\dfrac{s}{2N}\big).$
 
 \begin{lem}\label{lem12} Let $s>0, N \gs 4s$ be fixed. Consider the points   $x_1, x_2,\ldots, x_k\in [0,1]$, where $2\ls k \ls N.$ Then 
 \begin{equation} \label{measure_of_intersection}
     \lambda\Big(\bigcap_{j=1}^{k}B_{{j}}\Big) = g_s^{(k)}\left(N(\!(x_{1} - x_{2})\!), \ldots ,N(\!(x_{1} - x_{k})\!) \right) . 
 \end{equation}
 \end{lem}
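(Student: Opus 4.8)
The plan is to reduce $\lambda\big(\bigcap_{j=1}^{k}B_{j}\big)$ to an elementary interval computation on the line and then match the outcome with the definition of $g_{s}^{(k)}$. The first step exploits the hypothesis $N\gs 4s$: each arc $B_{j}=B(x_{j},s/(2N))$ has length $s/N\ls 1/4$, so if $\bigcap_{j}B_{j}\neq\emptyset$ then any common point $t$ forces $\|x_{1}-x_{j}\|\ls s/N\ls 1/4$ for every $j$. Hence all of $x_{1},\ldots,x_{k}$ lie in an arc shorter than $1/2$, no difference wraps around the circle, and the signed distances $u_{i}:=(\!(x_{1}-x_{i+1})\!)$ are honest displacements: lifting to $\bR$, the point $x_{i+1}$ sits at $x_{1}-u_{i}$ with $|u_{i}|\ls s/N$. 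I would isolate this as a first claim, since it is exactly what $N\gs 4s$ is there to guarantee.

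In these coordinates the arcs become genuine intervals $[x_{j}-s/(2N),\,x_{j}+s/(2N)]$, whose intersection is the single interval $[\max_{j}x_{j}-s/(2N),\,\min_{j}x_{j}+s/(2N)]$, of length
\[
\lambda\Big(\bigcap_{j=1}^{k}B_{j}\Big)=\Big\{\tfrac{s}{N}-\big(\max_{1\ls j\ls k}x_{j}-\min_{1\ls j\ls k}x_{j}\big)\Big\}^{+}.
\]
The core of the argument is to rewrite this diameter relative to the base point $x_{1}$. The displacement of $x_{1}$ from itself is $0$ and that of $x_{i+1}$ is $-u_{i}$, so the configuration extends to the left of $x_{1}$ by $\max_{1\ls i<k}\{u_{i}\}^{+}$ and to the right by $\max_{1\ls i<k}\{-u_{i}\}^{+}$, giving
\[
\max_{1\ls j\ls k}x_{j}-\min_{1\ls j\ls k}x_{j}=\max_{1\ls i<k}\{u_{i}\}^{+}+\max_{1\ls i<k}\{-u_{i}\}^{+}.
\]
Substituting this into the length formula, and rescaling each argument by the factor $N$ via the homogeneity $\{Na\}^{+}=N\{a\}^{+}$ applied to $y_{i}=N u_{i}=N(\!(x_{1}-x_{i+1})\!)$, turns the right-hand side into $g_{s}^{(k)}\big(N(\!(x_{1}-x_{2})\!),\ldots,N(\!(x_{1}-x_{k})\!)\big)$, which is \eqref{measure_of_intersection}.

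Finally I would close the complementary case $\bigcap_{j}B_{j}=\emptyset$. Here both sides must vanish: if the intersection is empty then the two-sided spread $S:=\max_{i}\{u_{i}\}^{+}+\max_{i}\{-u_{i}\}^{+}$ satisfies $S\gs s/N$ (otherwise, since $S<s/N\ls 1/4$ would force faithful representatives and a common interval of positive length), so the outer $\{\,\cdot\,\}^{+}$ kills $g_{s}^{(k)}$. I expect the main obstacle to be exactly this bookkeeping across the empty/non-empty dichotomy together with the modular subtlety: one must check that the spread computed from the truncated representatives $(\!(x_{1}-x_{i+1})\!)\in(-\tfrac12,\tfrac12]$ coincides with the genuine diameter precisely when the intersection has positive measure, and dominates $s/N$ otherwise. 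The point is that a non-empty intersection forces every $|u_{i}|\ls s/N$, hence $S\ls 2s/N\ls 1/2$, so no point can ever be placed on the wrong side of $x_{1}$; the role of $N\gs 4s$ is to keep these two regimes cleanly separated.
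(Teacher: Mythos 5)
Your argument follows essentially the same route as the paper's proof: the hypothesis $N \gs 4s$ guarantees that a nonempty intersection forces $\|x_1 - x_j\| \ls s/N \ls 1/4$ for all $j$, so the signed distances are additive (no wrap-around, cf. \eqref{sum_of_norms}) and the configuration lifts faithfully to $\bR$; the intersection then reduces to the two extreme points, and the diameter is rewritten relative to the base point $x_1$ as $\max_i\{u_i\}^{+} + \max_i\{-u_i\}^{+}$ with $u_i = (\!(x_1 - x_{i+1})\!)$. The paper does the last step via a case analysis on the position of $x_1$ among the ordered points $x_2,\ldots,x_k$; your formulation through the explicit lift handles all cases uniformly and is if anything cleaner, and your treatment of the empty-intersection case is the same ``both sides vanish'' observation, correctly argued by contraposition.

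However, your final rescaling step is invalid as written, and it hides a real discrepancy: positive homogeneity gives $\{s - NS\}^{+} = N\{s/N - S\}^{+}$ where $S = \max_i\{u_i\}^{+} + \max_i\{-u_i\}^{+}$, so your computation actually yields
\[ \lambda\Big(\bigcap_{j=1}^k B_j\Big) = \frac{1}{N}\, g_s^{(k)}\big(N(\!(x_1-x_2)\!),\ldots,N(\!(x_1-x_k)\!)\big), \]
not \eqref{measure_of_intersection}. Indeed \eqref{measure_of_intersection} as printed is false: for $k=2$ and $x_1 = x_2$ the left-hand side is $s/N$ while the right-hand side is $g_s^{(2)}(0) = s$. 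This is a slip in the paper itself --- its own proof commits the identical error at \eqref{gs_comes_into_play}, and the missing factor $1/N$ is silently restored when the lemma is applied in the proof of Theorem \ref{connection_to_higher_moments}, where $\sum \lambda\big(\bigcap_j B_{i_j}\big)$ is equated with $\frac{1}{N}\sum g_s^{(k)}(\cdots)$. So your derivation is correct up to its last line; you should state the identity with the factor $1/N$ (or equivalently as $N\lambda\big(\bigcap_j B_j\big) = g_s^{(k)}(\cdots)$) rather than forcing agreement with the printed statement via a homogeneity claim that does not hold.
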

 \begin{proof}

Let $x_{i_1}$ and $x_{i_2}$ be the first and last point modulo $1$, that is, $(\!( x_i - x_{i_1})\!) \gs 0 $ and $(\!( x_{i_2} - x_i)\!) \gs 0 $ for all $1\ls i \ls k, i\neq i_1, i_2.$ Then 
 \begin{align*}\lambda\Big(\bigcap_{j=1}^{k}B_{{j}}\Big) &= \lambda( B_{i_1} \cap B_{i_2}) = \Big\{ \frac{s}{N} - \|x_{i_1}-x_{i_2}\|\Big\}^+ \\ & =  \left\{\frac{s}{N} - \max_{1\ls m,n \ls k} (\!(x_{m} - x_{n})\!)\right\}^{+}    \end{align*}
 and it remains to prove that 
 \begin{equation} \label{gs_comes_into_play}
     \left\{\frac{s}{N} - \max_{1\ls m,n \ls k} (\!(x_{m} - x_{n})\!)\right\}^{+}  = g_s^{(k)}\left(N(\!(x_{1} - x_{2})\!), \ldots ,N(\!(x_{1} - x_{k})\!) \right).
 \end{equation}

We only need to show this for points $x_1,\ldots,x_k$ such that $\|x_{1}-x_{\ell} \|\ls \frac{s}{N} \ls \frac{1}{4}$ for all $2\ls \ell \ls k$ \,--\, otherwise both sides of \eqref{gs_comes_into_play} are zero. For such points we have
 \begin{equation}\label{sum_of_norms}
 (\!( x_{\ell} - x_{j} )\!) =   (\!( x_{\ell} - x_{1} )\!) + (\!( x_{1} - x_{j} )\!)\qquad \text{for all } 1\ls \ell, j \ls k .
 \end{equation}

We further assume without loss of generality that the points $x_{2},\ldots, x_{k}$ are in increasing order, that is, $(\!( x_{n+1}-x_{n})\!)\gs 0 $ for $n=2,\ldots,k-1.$ We first consider the case in which $x_{1}$ is between $x_{2}$ and $x_{k},$ i.e. $(\!(x_{1}-x_{2})\!)>0$ and $(\!(x_{k}-x_{1})\!)>0.$  Then 
\begin{align*}
  \max_{1\ls m,n \ls k} (\!(x_{m} - x_{n})\!) &=  (\!( x_{k} - x_{2} )\!) \stackrel{(\ref{sum_of_norms})}{=}   (\!( x_{k} - x_{1} )\!) + (\!( x_{1} - x_{2} )\!) \\
  &=   \max_{2\ls n \ls k}\{ -(\!( x_{1} - x_{n} )\!) \}^+  + \max_{2\ls n \ls k}\{ (\!( x_{1} - x_{n} )\!)\}^+ ,
\end{align*}
which proves \eqref{gs_comes_into_play}. The proof is similar in all other cases regarding the relative position  of $x_1$ with respect to $x_2,\ldots, x_k.$ 
 \end{proof}

In order to employ the test functions $g_s^{(k)}$ to deduce information for Poissonian $k$\,--\,th order correlations, we first need to determine their integrals.

\begin{lemma} \label{integral_of_g} For any $k\gs 2$ and $s>0,$ we have
\begin{equation} \label{int_of_gs}
    \int_{\mathbb{R}^{k-1}} g^{(k)}_s(y_1,\ldots,y_{k-1}) \,\mathrm{d}y_1 \ldots\mathrm{d}y_{k-1} = s^{k}.
\end{equation} 
 \end{lemma}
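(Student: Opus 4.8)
The plan is to reduce the claim to a single geometric volume computation. First I would rewrite the integrand in terms of a diameter. Writing $\{\,\cdot\,\}^+$ for the positive part, one checks that
\[ \max_{1\ls i<k}\{y_i\}^+ = \max(0,y_1,\ldots,y_{k-1}), \qquad \max_{1\ls i<k}\{-y_i\}^+ = -\min(0,y_1,\ldots,y_{k-1}), \]
so that the quantity subtracted inside the braces is exactly the \emph{diameter}
\[ D(y) := \max(0,y_1,\ldots,y_{k-1}) - \min(0,y_1,\ldots,y_{k-1}) \]
of the $k$-point set $\{0,y_1,\ldots,y_{k-1}\}\subseteq\bR$. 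Hence $g_s^{(k)}(y)=\{s-D(y)\}^+$, and in particular $D(y)\gs 0$ always.

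Next I would apply the layer-cake identity $\{s-D\}^+=\int_0^s \one[D<r]\,\mathrm{d}r$ (valid since $D\gs 0$) together with Tonelli's theorem to get
\[ \int_{\bR^{k-1}} g_s^{(k)}(y)\,\mathrm{d}y = \int_0^s V(r)\,\mathrm{d}r, \qquad V(r):=\lambda\{y\in\bR^{k-1}: D(y)<r\}. \]
Since $D$ is positively homogeneous of degree $1$, the dilation $y\mapsto ry$ shows $V(r)=r^{k-1}V(1)$, whence $\int_{\bR^{k-1}} g_s^{(k)} = V(1)\int_0^s r^{k-1}\,\mathrm{d}r = V(1)\,s^{k}/k$. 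Everything therefore reduces to proving $V(1)=k$.

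To compute $V(1)=\lambda\{y:D(y)<1\}$ I would decompose the region according to which of the $k$ points $0,y_1,\ldots,y_{k-1}$ attains the minimum (ties form a Lebesgue-null set). If $0$ is the minimum, then each $y_i\in(0,1)$ and the constraint $D(y)<1$ is automatic, contributing measure $1$. If instead a fixed $y_j$ is the minimum, then $y_j\in(-1,0)$, while each remaining coordinate $y_i$ (with $i\ne j$) must lie in the length-$1$ window $(y_j,y_j+1)$, the upper bound $0<y_j+1$ being automatic; integrating the resulting factor over $y_j\in(-1,0)$ again yields measure $1$. Summing the $1$ contribution from $0$ being the minimum and the $k-1$ contributions from $y_1,\ldots,y_{k-1}$ gives $V(1)=1+(k-1)=k$, and thus $\int_{\bR^{k-1}} g_s^{(k)}=s^k$.

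I expect the only genuinely delicate step to be the identity $V(1)=k$: one must correctly read off the constraints on the free coordinates once the minimizing point is singled out, and verify that the admissible window has length exactly $1$ in every case. The preliminary reductions — recognizing $g_s^{(k)}$ as $\{s-D\}^+$, the layer-cake formula, and the scaling $V(r)=r^{k-1}V(1)$ — are routine.
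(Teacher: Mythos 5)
Your argument is correct, including the step you flagged as delicate: the rewriting $g_s^{(k)}(y)=\{s-D(y)\}^+$ with $D(y)$ the diameter of $\{0,y_1,\ldots,y_{k-1}\}$ is a faithful reformulation of the definition, the layer-cake/Tonelli reduction and the scaling $V(r)=r^{k-1}V(1)$ are sound (the sublevel set $\{D<r\}$ is bounded, so $V(r)<\infty$), and the evaluation $V(1)=1+(k-1)=k$ by conditioning on which of the $k$ points $0,y_1,\ldots,y_{k-1}$ is the strict minimum is exactly right: ties lie on finitely many hyperplanes, hence are null, and once the minimizer is fixed, each of the remaining free coordinates ranges over a window of length exactly $1$, with the minimizer itself confined to an interval of length $1$. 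This is, however, a genuinely different proof from the paper's. The paper integrates $g_s^{(k)}$ directly: it partitions the support $[-s,s]^{k-1}$ into the sign-pattern cells $D_\ell(A)$ for $A\subseteq[k-1]$ with $|A|=\ell$, refines each cell according to which coordinate attains the maximal positive and the minimal negative value (on such a piece $g_s^{(k)}$ is the affine function $s-y_i+y_j$), evaluates the resulting iterated integral via the Beta integral $\int_0^1 x^{n-1}(1-x)^{m-1}\,\mathrm{d}x=\frac{1}{m}\binom{m+n-1}{n-1}^{-1}$, and then reassembles the total with two symmetry arguments and binomial counting, each cell $D_\ell(A)$ contributing $\frac{s^k}{k}\binom{k-1}{\ell}^{-1}$. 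Your route replaces all of that computation with the single geometric volume $V(1)=k$; it makes the homogeneity (hence the exponent $k$ in $s^k$) structurally transparent and avoids both the Beta-function identity and the subset bookkeeping. What the paper's longer computation buys in exchange is the finer local information of \eqref{value_of_integral} --- the exact value of the integral over each sign cell --- which is not needed for the statement of the lemma itself.
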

 \begin{proof}
Clearly $\mathrm{supp}(g_s^{(k)}) \subseteq [-s,s]^{k-1}.$ We partition the set  $[-s,s]^{k-1}$ as follows: for each $\ell =0,1,\ldots, k-1$ and each subset $A\subseteq [k-1]$ with $|A|=\ell$ we define 
\[ D_\ell(A) = \{(y_1,\ldots, y_{k-1}) \in [-s,s]^{k-1}: y_i \gs 0 \iff i\in A  \}. \]
Then 
\[ \int g_s^{(k)}(y_1,\ldots,y_{k-1})\,\mathrm{d}y_1\ldots \mathrm{d}y_{k-1} = \sum_{\ell=0}^{k-1} \sum_{\substack{A\subseteq [k-1] \\ |A|=\ell}} \int_{D_{\ell}(A)} \hspace{-2mm} g_s^{(k)}(y_1,\ldots,y_{k-1})\,\mathrm{d}y_1\ldots \mathrm{d}y_{k-1}.  \]
Consider first the set $A_0 = \{1,\ldots, \ell \}.$ Then we can write 
\[ D_\ell(A_0) = \bigcup_{\substack{1\ls i \ls \ell\\ \ell < j <k}}D_{\ell}(A_0, i,j)   \]
where for each $1\ls i \ls \ell$ and $\ell < j < k$ we define
\[ D_{\ell}(A_0,i,j) = \left\{(y_1,\ldots, y_{k-1})\in D_{\ell}(A_0) : y_i=\max_{1\ls r \ls \ell }y_r, \, y_j = \min_{\ell < r <k}y_r \right\}.  \]
The sets $D_{\ell}(A_0,i,j)$ are almost pairwise disjoint (their intersections are sets of zero $(k-1)$\,--\,dimensional Lebesgue measure). On the set 
$D_{\ell}(A_0,1,k-1)$ we have for $1 \ls \ell < k-1$ that $$g_s^{(k-1)}(y_1,\ldots, y_{k-1}) =   s- y_1 + y_{k-1} .$$ Thus 
\begin{align*}
    \int\limits_{D_{\ell}(A_0,1,k-1)}\hspace{-4mm} g_s^{(k)} &= \int_0^s \int_{-(s-y_1)}^{0}\iint_{[0,y_1]^{\ell-1}\times [y_{k-1},0]^{k-\ell-2}} \hspace{-15mm}(s-y_1 + y_{k-1})\,\mathrm{d}y_2 \ldots \mathrm{d}y_{k-2}\mathrm{d}y_{k-1}\mathrm{d}y_1\\
    &= \int_0^s \int_{-(s-y_1)}^{0} (-1)^{k-\ell-2}y_1^{\ell-1} y_{k-1}^{k-\ell-2}(s-y_1+y_{k-1}) \,\mathrm{d}y_{k-1}\mathrm{d}y_1 \\
    &= \int_0^s (-1)^{k-\ell-2}x^{\ell-1}(s-x)\int_{-(s-x)}^0 y^{k-\ell-2}\,\mathrm{d}y \mathrm{d}x \\
    & \qquad + \int_0^s (-1)^{k-\ell-2}x^{\ell-1}\int_{-(s-x)}^0 y^{k-\ell-1} \,\mathrm{d}y  \\
    &= \Big(\frac{1}{k-\ell-1}  - \frac{1}{k-\ell}\Big)\int_0^s x^{\ell-1}(s-x)^{k-\ell}\,\mathrm{d}x.
\end{align*}
We now make use of the identity 
\[ \int_0^1 x^{n-1}(1-x)^{m-1}\,\mathrm{d}x = \frac1m \binom{m+n-1}{n-1}^{-1}, \qquad   m,n\gs 1 \]
(see for example \cite[p. 908, 910]{table}) to deduce that 
\[  \int\limits_{D_{\ell}(A_0,1,k-1)}\hspace{-4mm} g_s^{(k)} =  \frac{1}{(k-\ell)(k-\ell-1)\ell}    \binom{k}{k-\ell}^{-1} s^k.  \]
Using a symmetry argument, one sees that the integral of $g_s^{(k)}$ has the same value on any of the sets of the form $D_{\ell}(A_0,i,j)$. Since there exist $\ell (k-\ell-1)$ such sets, we have 
\begin{equation} \label{value_of_integral}
    \int_{D_{\ell}(A_0)}g_s^{(k)}\, = \,  \frac{1}{k-\ell}\binom{k}{k-\ell}^{-1} = \, \, \frac1k \binom{k-1}{\ell}^{-1} s^k. 
\end{equation}  
Another symmetry argument now shows that the value of the integral on any set $D_{\ell}(A)$ where $A\subset [k-1]$ has $\ell$ elements is the same as on the right hand side of \eqref{value_of_integral}. We have proved this for $\ell=1,\ldots, k-2$, but the same result also holds when $\ell=0$ or $k-1.$ Since there exist precisely $\binom{k-1}{\ell}$ such subsets of $[k-1],$ we have 
\[ \int_{D_\ell} g_s^{(k)} = \frac{s^k }{k} \cdot  \]
Finally, summing over the $k$ possible values of the index $\ell$ we deduce \eqref{int_of_gs}. 
\end{proof}

 Armed with Lemma \ref{lem12}, we can proceed to the proof of Theorem \ref{connection_to_higher_moments}.
 
\begin{proof}[Proof of Theorem \ref{connection_to_higher_moments}]
(i) Fix some $s>0$ and $N\gs 1$. 
A counting argument gives that for any $0\ls t \ls 1 $ such that $F(t,s,N)\gs k,$
\begin{equation}\label{counting_arg}F(t,s,N)(F(t,s,N)-1)\ldots (F(t,s,N)-(k-1)) = \hspace{-2mm}
 \sum_{\substack{i_1,\ldots ,i_k \ls N\\ {\rm distinct}}}\hspace{-2mm}
  \mathds{1}_{\left( \bigcap\limits_{j=1}^{k}B_{i_j}\right)}(t).\end{equation}
  Note that this equality also holds when $F(t,s,N) < k$: since $F$ is integer\,--\,valued, both sides of \eqref{counting_arg} are then equal to $0$.
Integrating with respect to $t$ we get  
\begin{align*}\nonumber I_k(s,N) &= \!\!\sum_{\substack{i_1,\ldots,i_k \ls N\\\mathrm{distinct}}}\lambda\Big( \bigcap_{j=1}^{k}B_{{i_j}}\Big) \stackrel{\eqref{measure_of_intersection}}{=}
 \frac{1}{N}\sum_{\substack{i_1, \ldots,i_k \ls N\\ \mathrm{distinct}}}\!\!g_s^{(k)}\left(N(\!(x_{i_1} - x_{i_2})\!), \ldots ,N(\!(x_{i_1} - x_{i_k})\!) \right)  
\end{align*}

\noindent Since we assumed that $\xn$ has Poissonian $k$\,--\,th correlations, it follows from Lemma \ref{integral_of_g} that $ \lim\limits_{N\to\infty} I_k(s,N) = s^k.$

\noindent (ii) Using the well-known formula for Stirling numbers of the second kind
 
\[\sum_{j=1}^{k}c_{k,j}x(x-1)\cdots (x-(j-1)) = x^k\]
(see e.g. \cite[Chap. 5.3]{combinatorics}), we can write 
\begin{align}\nonumber
I_k^*(s,N) = \int_{0}^{1} F(t,s,N)^k \,\mathrm{d}t  =&    \sum_{\substack{i_1,\ldots,i_k\ls N\\{\rm distinct}}} \lambda\Big( \bigcap_{j=1}^{k}B_{x_{i_j}}\Big) + 
 \\\nonumber &+ c_{k,k-1} \sum_{\substack{i_1,\ldots,i_{k-1}\ls N\\ {\rm distinct}}} \lambda\Big( \bigcap_{j=1}^{k-1}B_{x_{i_j}}\Big) + \ldots
 \\& + c_{k,1} \sum_{i_1 \ls N} \lambda (B_{x_{i_1}}).  \nonumber 
 \end{align}
In view of \eqref{measure_of_intersection}, this implies that
\begin{align}
 \label{Ikstar}I_k^*(s,N)   &= R_k(g_s^{(k)},N) + c_{k,k-1} R_{k-1}(g_s^{(k-1)},N) + \ldots \\ & \qquad + c_{k,1}R_{2}(g_s^{(2)},N).\nonumber
\end{align}
The main term in \eqref{Ikstar} is $R_k(g_s^{(k)},N)= I_k(s,N),$ and we have shown in (i) that $I_k(s,N)\to s^k, N\to \infty.$  For $\ell < k$, we see that $g_s^{(\ell)} \ls s\mathds{1}_{[-s,s]^{\ell-1}}$ which implies by monotonicity of $R_\ell(\cdot, N)$ that
 \[  R_{\ell}(g_s^{(\ell)},N)  \ls sR_{\ell}(s,N).  \]
By Proposition \ref{HaukePrinciple}, since $\xn$ has Poissonian $k$\,--\,th correlations we have  
\[ \limsup_{N\to\infty}R_{\ell}(s,N) = \mathcal{O}_k(s^{\ell-1}), \qquad s\to \infty.  \]
Combining \eqref{Ikstar} with these remarks, we see that $$\limsup\limits_{N\to\infty}I_k^*(s,N) = s^k + \mathcal{O}_k(s^{k-1}), \quad s\to\infty.$$
(iii)  Since $\xn$ has Poissonian $\ell$\,--\,correlations for all $2\ls \ell \ls k$, by definition $\lim\limits_{N\to\infty}R_{\ell}(g_s^{(\ell)},N)=s^{\ell}.$  Therefore in that case, \eqref{Ikstar} implies that
\[ \lim_{N\to\infty} I_k^*(s,N) = s^k + c_{k,k-1}s^{k-1} + \ldots + c_{k,1}s. \]
\end{proof}

 \section*{Appendix A: Equivalent Definitions of Poissonian Correlations}
 
We discuss the different definitions of Poissonian $k$\,--\,th order correlations appearing in the literature. \par
The reader who is already familiar with the notion of Poissonian correlations might compare the definition given in \eqref{poisson} with another common definition, where in the correlation function $R_k(g,N)$ the differences $(\!(x_{i_1}-x_{i_2})\!), (\!(x_{i_2}-x_{i_3})\!),\ldots, (\!(x_{i_{k-1}}-x_{i_k})\!)$ appear  instead of the differences $(\!(x_{i_1}-x_{i_2})\!),(\!(x_{i_1}-x_{i_3})\!)\ldots (\!(x_{i_1}-x_{i_k})\!)$ as in \eqref{kthcorrelationfunction}. 
This is the definition used, for example, in the papers \cite{k,kr} that deal with the $k$\,--\,th level correlations of quadratic residues modulo some integer $Q\gs 1$. \par Here we explain that these two definitions are equivalent. \\

\noindent {\bf Proposition A. } {\it  Let $\xn \subseteq [0,1]$ be a sequence.
The following are equivalent: \newline
(i) The sequence $\xn$ has Poissonian $k$\,--\,th order correlations. \newline
(ii) For all test functions $g \in C_c(\mathbb{R}^{k-1})$ we have
\[\lim_{N \to \infty}\frac{1}{N}\!\!\sum_{\substack{i_1,\ldots,i_k \ls N\\ \mathrm{ distinct}}}\hspace{-2mm} g\left(N(\!(x_{i_1}-x_{i_2})\!),N(\!(x_{i_2}-x_{i_3})\!),\ldots,N(\!(x_{i_{k-1}}-x_{i_k})\!)\right) = \int\limits_{\mathbb{R}^{k-1}} g(x) \,\mathrm{d}x.\]
(iii) For all rectangles  $B = [a_1,b_1]\times [a_2,b_2] \times \ldots \times [a_{k-1},b_{k-1}],\; b_i > a_i,\; 1  \ls i \ls k-1,$ we have
\[\lim_{N \to \infty} R_k(\mathds{1}_{B},N) = \lambda(B)\]
where $\lambda$ denotes the $(k-1)$\,--\,dimensional Lebesgue measure.   }

\begin{proof} The proof of the equivalence of (i) and (iii) uses a standard approximation argument from analysis and is omitted. 
    We  show (i) $\Rightarrow$ (ii), the direction (ii) $\Rightarrow$ (i) can be proven in a similar fashion. Let $g \in C_c(\mathbb{R}^{k-1})$ be an arbitrary test function, and define $f: \mathbb{R}^{k-1} \to \mathbb{R}$ via
    \[f(x_1,x_2,\ldots,x_{k-1})  = g(x_1,x_2-x_1,x_3-x_2,\ldots,x_{k-1}-x_{k-2}).\]  
This definition implies that $f \in C_c(\mathbb{R}^{k-1})$,
   \begin{equation}\label{same_integral}
    \int_{\mathbb{R}^{k-1}} f(x) \,\mathrm{d}x = \int_{\mathbb{R}^{k-1}} g(x) \,\mathrm{d}x,
    \end{equation}
 and furthermore 
    \begin{equation}\label{g_f_translate}g(x_1,x_2,\ldots,x_{k-1}) = f\left(x_1,x_1+x_2,\ldots,x_1+x_2 + \ldots +x_{k-1}\right).\end{equation}
 Now when $N\gs 1$ is so large that $ \mathrm{supp}(f) \subseteq \left[\frac{-N}{2k},\frac{N}{2k}\right]^{k-1}$ 
    and in addition 
    \[f\left(N(\!(x_{i_1}-x_{i_2})\!),N(\!(x_{i_2}-x_{i_3})\!),\ldots,N(\!(x_{i_{k-1}}-x_{i_k})\!)\right) \neq 0,\]
    then for all $\ell \ls k$ we have $\lVert x_{i_{\ell-1}}-x_{i_{\ell}} \rVert \ls \dfrac{1}{2k}.$ Hence 
    \begin{equation*}\lVert x_{i_1} - x_{i_2}\rVert + \lVert x_{i_2} - x_{i_3}\rVert + \ldots + \lVert x_{i_{\ell-1}} - x_{i_\ell} \rVert \ls \frac{1}{2}
    \end{equation*}    
and thus
  \begin{equation}\label{signed_distance_cond}(\!(x_{i_1} - x_{i_2})\!) + (\!(x_{i_2} - x_{i_3})\!) + \ldots + (\!(x_{i_{\ell-1}} - x_{i_\ell})\!) = (\!(x_{i_1} - x_{i_\ell})\!).
    \end{equation}
Using these considerations, we get
    \begin{align*}
    \frac{1}{N}& \sum_{\substack{i_1,\ldots,i_k \ls N\\ \text{ distinct}}}\hspace{-2mm} g\left(N(\!(x_{i_1}-x_{i_2})\!),N(\!(x_{i_2}-x_{i_3})\!), \ldots,N(\!(x_{i_{k-1}}-x_{i_k})\!)\right) \\ 
    & \stackrel{\eqref{g_f_translate}}=  \frac{1}{N}\sum_{\substack{i_1,\ldots, i_k \ls N\\ \text{ distinct}}}\hspace{-2mm}
    f\big(N(\!(x_{i_1}-x_{i_2})\!), N\!\left((\!(x_{i_1}-x_{i_2})\!) + (\!(x_{i_2}-x_{i_3})\!)\right),\ldots\big) 
    \\   &\stackrel{\eqref{signed_distance_cond}}=  \frac{1}{N}\sum_{\substack{i_1,\ldots,i_k \ls N\\ \text{ distinct}}}\hspace{-2mm}f\left(N(\!(x_{i_1}-x_{i_2})\!),N(\!(x_{i_1}-x_{i_3})\!),\ldots,N(\!(x_{i_1}-x_{i_k})\!)\right) = R_k(f,N).
    \end{align*}
    Combining this with \eqref{same_integral}, we see that under the hypothesis that the sequence has Poissonian $k$\,--\,th correlations, statement (ii) is true. As noted, the implication \\(ii) $\Rightarrow$ (i) can be shown in a similar way.
    \end{proof}

At this point, we should also mention that for the specific case $k=2$,  sequences are usually defined to have \textit{Poissonian pair correlations} when 
\begin{equation} \label{PPCdef}
\lim_{N\to \infty} R_2(s,N) = 2s \qquad \text{ for all } s>0, 
\end{equation}
where $R_2(s,N)$ is the correlation function as in \eqref{kthcorrelationfunction_old_def}. We have already explained why a sequence with Poissonian pair correlations automatically satisfies \eqref{PPCdef}, but it turns out that condition \eqref{PPCdef} is actually equivalent to \eqref{poisson} when $k=2$. Indeed, assume $\xn$ is a sequence such that \eqref{PPCdef} holds. For the test function $\one_{[0,s]}$ (that is, the characteristic function of the interval $[0,s]$) we have 
\begin{align}\label{0s2}
R_2(\mathds{1}_{[0,s]},N) 
&= \frac{1}{2} R_2(s,N) + \frac{1}{N}\#\big\{i \neq j \ls N: x_i = x_j\big\}.
\end{align}
Since
\[0 \ls \frac{1}{N}\#\{ i \neq j \ls N: x_i = x_j\} \ls R_2(\varepsilon,N) \qquad \text{ for any } \varepsilon > 0, \]
the assumption on $\xn$ implies that the rightmost term in \eqref{0s2} tends to $0$. By this, we deduce that $ \lim_{N \to \infty} R_2(\one_{[0,s]},N) = s  \text{ for all } s>0 $ and it follows  that for any $b > a$ we have $\lim_{N\to \infty} R_2(\mathds{1}_{[a,b]},N) = b-a.$ In view of Proposition A (iii), relation \eqref{poisson} holds for $k=2$. \par We also note that this equivalence cannot be  generalized for correlations of higher orders.  The reason is that  when $k\gs 3$, we cannot employ the symmetry argument to deduce  a generalization of \eqref{0s2}, i.e. for the rectangle $B = [0,s_1]\times   \ldots \times [0,s_{k-1}]$, it does not hold in general that
        \begin{equation*}\label{non_equivalence}
        R_k(\mathds{1}_B,N) = \Big(\frac{1}{2}\Big)^{k-1}R_k(s_1,\ldots,s_{k-1},N).\end{equation*}

 \section*{Appendix B: The $k$\,--\,th order correlations of random sequences are almost surely Poissonian}
  We establish a fact that was alluded to in the introduction: whenever $(Y_n)_{n\in\bN}$ is a sequence of independent and uniformly distributed random variables in $[0,1]$, then for all $k\gs 2$ the sequence $(Y_n(\om))_{n\in\bN}$ almost surely has Poissonian correlations of $k$\,--\,th order. The method of proof we use is a standard mean\,--\,variance argument that appears very often in the relevant literature. We refer the reader to \cite{hinrichs} for a proof in higher dimensions when $k=2$.

Indeed, given such a sequence $(Y_n)_{n\in\bN}$ and $a_r<b_r \, \, (1\ls r < k)$, let 
\[ R_k(\omega,N) = \frac{1}{N}\# \left\{ \parbox{7em}{$ i_1,\ldots,i_k \leqslant N $ \\  $i_j \neq i_\ell \,\forall j \neq \ell$}\hspace{-5mm}: \frac{a_r}{N} \ls Y_{i_1}(\omega)-Y_{i_{r+1}}(\om) \ls \frac{b_r}{N}\quad (1\ls r < k) \right\}  . \]
According to Proposition A it suffices to show that $R_k(\omega,N)\to (b_1-a_1) \cdots (b_{k-1}-a_{k-1})$ as $N\to \infty$ almost surely.

We  proceed to the calculation of the expectation and variance of $R_k(\cdot, N)$  viewed as a random variable on $[0,1].$ This will be done in several steps. In the   estimates that follow, the implicit constants in the asymptotic notations depend on the scales $a_1,\ldots, b_{k-1}.$
\newline \textbf{ Step 1:}  We claim that for any $m\gs 1$ and for any distinct indices $i_1, i_2, \ldots, i_m, \gs 1$ the differences  
\[ \D_1 = Y_{i_1} - Y_{i_2},\quad \D_2 = Y_{i_1} - Y_{i_3},\quad \ldots \quad \D_{m+1} = Y_{i_1} - Y_{i_{m+2}}  \] are independent.
For convenience, we only show this when $m=1$ and for the random variables
$$\D_1=Y_1-Y_2 \quad \text{ and } \quad \D_2= Y_1-Y_3 $$  (the proof is similar for any other choice of indices and for any $m\gs 2$).
Writing $f_{X}$ for the probability density function of a random variable $X:[0,1]\to \bR$, we need to show that
\begin{equation}\label{independence} 
f_{\D_1,\D_2}(x_1,x_2) = f_{\D_1}(x_1)f_{\D_2}(x_2) \quad \text{ for all } x_1,x_2\in [0,1].\end{equation} Note that in our context, addition and subtraction of the random variables is always understood modulo $1$. Thus for all $i\in\bN$ we have $f_{Y_i}(y)=1,$  $0\ls y \ls 1$ and the density functions of the differences $\D_1, \D_2$ are 
\[f_{\D_1}(\delta) = \int_0^1 f_{Y_1}(y)f_{Y_2}(\delta+y)\mathrm{d}y = 1  \]
and similarly $f_{\D_2}(\delta)= 1,$ for all $\delta \in [0,1].$ 
We start from the left\,--\,hand side of \eqref{independence}: the theorem of total probability and the independence of
$\{Y_1, Y_2,Y_3\}$ imply that 
\begin{align*}
    f_{\D_1,\D_2}(x_1,x_2) &= \int_{0}^{1}f_{\D_1,\D_2 \vert Y_1}(x_1,x_2|y) \mathrm{d}y = \int_0^1 \frac{f_{\D_1,\D_2,Y_1}(x_1,x_2,y)}{f_{Y_1}(y)}\mathrm{d}y \\ 
    &= \int_{0}^{1} f_{\D_1,\D_2,Y_1}(x_1,x_2,y)\mathrm{d}y = \int_0^1 f_{y-Y_2}(x_1)f_{y-Y_3}(x_2)\mathrm{d}y =1.
\end{align*}
\textbf{Step 2:} We now claim that whenever each of the sets $I = \{i_1,\ldots,i_m\}$ and  $J = \{j_1,\ldots, j_n\}$ consists of pairwise distinct indices and $\#(I \cap J) \ls 1$, then the differences $Y_{i_1}-Y_{i_2}, Y_{i_1}-Y_{i_3}, \ldots, Y_{i_1}-Y_{i_m}, Y_{j_1}-Y_{j_2},\ldots, Y_{j_1}-Y_{j_{n}}$
are independent random variables.
It follows by the previous step that the random variables $Y_{i_1}-Y_{i_2}, Y_{i_1}-Y_{i_3}, \ldots, Y_{i_1}-Y_{i_m}$ are independent, and so are $Y_{j_1}-Y_{j_2},\ldots ,Y_{j_1}-Y_{j_{n}}$ .
If the sets $I,J$ are disjoint, all of these random variables are immediately independent, so we deal with the case when $\#(I\cap J) =1.$ We need to distinguish the following cases: \newline
Case 1: $i_1 = j_1$. In this case, independence follows immediately from Step $1$. \newline
Case 2: $i_1 = j_{\ell}$ or $j_1 = i_{\ell}$ with $\ell \gs 2$: Then we can assume without loss of generality that $i_1 = j_2$. The random variables we are looking at are
\[Y_{j_1}-Y_{j_n},Y_{j_1} - Y_{j_{n-1}},\ldots,Y_{j_1} - Y_{j_2} = Y_{j_1} - Y_{i_1},
Y_{i_1} - Y_{i_2},\ldots,Y_{i_1} - Y_{i_m},
\]
and these are independent by an argument similar to the one used in Step $1.$ \newline
Case 3: $i_r = j_{\ell}$ with $r,\ell \gs 2$: Then we assume without loss of generality $i_2 = j_2$ and we are in a similar situation as in Case 2. \newline \textbf{Step 3:} Given $k$ distinct indices $i_1,\ldots, i_k \ls N,$ we write \[ X_{i_1,\ldots,i_{k}} =  \one_{[\frac{a_1}{N} \ls Y_{i_1}-Y_{i_2}\ls \frac{b_1}{N}]}\cdots \one_{[ \frac{a_{k-1}}{N}\ls Y_{i_1}-Y_{i_{k}}\ls \frac{b_{k-1}}{N}]}. \]
We calculate the covariance $\text{Cov}(X_{i_1,\ldots,i_k},X_{j_1,\ldots,j_k}),$ where $i_1,\ldots,i_k \ls N$ and $j_1,\ldots, j_k\ls N$ both consist of distinct indices. As seen above, when 
$\#\{i_1,\ldots,i_k\} \cap \{j_1,\ldots,j_k\} \ls 1$ the variables $X_{i_1,\ldots,i_{k}}$ and $X_{j_1,\ldots,j_{k}}$ are independent and thus the covariance in question is $0$. We need to bound the covariance of $X_{i_1,\ldots,i_{k}},X_{j_1,\ldots,j_{k}}$ when
\begin{equation}\label{intersecting_indices}
\#\{i_1,\ldots,i_k\} \cap \{j_1,\ldots,j_k\} = \ell \gs 2.
\end{equation}
If $(i_1,\ldots,i_k),(j_1,\ldots,j_k)$ fulfill \eqref{intersecting_indices}, then
\begin{align} 
\text{Cov}(X_{i_1,\ldots,i_k},X_{j_1,\ldots,j_k}) &=  \mathbb{E}\big[ \big(X_{i_1,\ldots,i_k} - \mathbb{E}[X_{i_1,\ldots,i_k}] \big)  \big(X_{j_1,\ldots,j_k} - \mathbb{E}[X_{j_1,\ldots,j_k}] \big)\big] \nonumber \\
&\ls   \mathbb{E}\big[ X_{i_1,\ldots,i_k} \cdot X_{j_1,\ldots,j_k} \big] \nonumber \\
  &=  \mathbb{E}\Big[ \one_{[\frac{a_1}{N} \ls Y_{i_1}-Y_{i_2}\ls \frac{b_1}{N}]}\cdots \one_{[ \frac{a_{k-1}}{N}\ls Y_{j_1}-Y_{j_{k}}\ls \frac{b_{k-1}}{N}]}\Big]. \label{the_integral_to_be_estimated}
\end{align}
To find an upper bound for the right\,--\,hand side, we take a maximal subset $\tilde{J} \subset \{j_2,\ldots,j_k\}$ such that 
$\#\{i_1,\ldots,i_k\} \cap (\{j_1\} \cup \tilde{J}) \ls 1$. By the discussion in Step $2$, this gives rise to $2(k-1) - (\ell - 1)$ independent random variables among the factors in the integral in \eqref{the_integral_to_be_estimated}, each of them having expectation $\mathcal{O}\left(\frac{1}{N}\right)$. The remaining differences appearing in \eqref{the_integral_to_be_estimated} might not be independent, but can be trivially bounded from above by $1$ since they are characteristic functions. We therefore conclude that 
\[ \text{Cov}(X_{i_1,\ldots,i_k},X_{j_1,\ldots,j_k}) = \mathcal{O}\Big( \frac{1}{N^{2(k-1) - (\ell -1)}} \Big)\cdot \]
\textbf{Step 4: }We fix a value of $\ell\gs 2$ and we estimate 
\[ \sum_{ \substack{i_1,\ldots, i_k\ls N\\ j_1,\ldots, j_{k}\ls N\\ \mathrm{distinct}
\\ \#\{i_1,\ldots,i_k\} \cap \{j_1,\ldots,j_k\} = \ell}} \hspace{-8mm} \text{Cov}(X_{i_1,...,i_k},X_{j_1,...,j_k}). \] 
We count how many choices of the sets $\{i_1,\ldots, i_k\}$ and $\{j_1, \ldots, j_k \}$ there exist such that \eqref{intersecting_indices} holds. From a fixed set $\{i_1,\ldots,i_k\}$ we can chose $\ell$ elements that will be in the intersection with $\{j_1, \ldots, j_k\}$ in $\binom{k}{\ell}$ ways. The remaining $k-\ell$ elements of $\{j_1, \ldots, j_k\}$ can be chosen in $\mathcal{O}(N^{k-\ell})$ ways. Since the set $\{i_1,\ldots, i_k\}$ can be chosen in $\mathcal{O}_k(N^k)$ ways, we deduce that the different choices of the sets $\{i_1,\ldots, i_k\}$ and $\{j_1, \ldots, j_k \} $ with precisely $\ell$ common elements is $\mathcal{O}_{k,\ell}(N^{2k-\ell})$. Combining with Step $3$, we obtain
\begin{equation}\label{covariance_bound}
\sum_{ \substack{i_1,\ldots, i_k\ls N\\ j_1,\ldots, j_{k}\ls N\\ \mathrm{distinct}
\\ \#\{i_1,\ldots,i_k\} \cap \{j_1,\ldots,j_k\} = \ell}} \hspace{-8mm} \text{Cov}(X_{i_1,...,i_k},X_{j_1,...,j_k})
= \mathcal{O}_{k,\ell}\big( N^{2k - \ell} \frac{1}{N^{2(k-1) - (\ell -1)}}\big) = \mathcal{O}_{k,\ell}( N ).
\end{equation}
\textbf{Step 5: } We are finally in place to calculate the expectation and variance of $R_k(\cdot, N).$  The expectation is
\begin{align*}
\mathbb{E}[R_k(\cdot,N)] &= \frac{1}{N}\sum_{\substack{i_1,\ldots, i_{k}\ls N\\ \mathrm{distinct}}} \mathbb{E}\big[ \one_{[\frac{a_1}{N}\ls Y_{i_1}-Y_{i_2}\ls \frac{b_1}{N}]}\cdots \one_{[\frac{a_{k-1}}{N}\ls Y_{i_1}-Y_{i_{k}}\ls \frac{b_{k-1}}{N}]}\big] \\
&= \frac{1}{N}\sum_{\substack{i_1,\ldots, i_{k}\ls N\\ \mathrm{distinct}}} \mathbb{E}\big[ \one_{[\frac{a_1}{N} \ls Y_{i_1}-Y_{i_2}\ls \frac{b_1}{N}]}\big] \cdots \mathbb{E}\big[\one_{[ \frac{a_{k-1}}{N}\ls Y_{i_1}-Y_{i_{k}}\ls \frac{b_{k-1}}{N}]}\big] \\
&=  (a_1-b_1) \cdots (a_{k-1}-b_{k-1})  + \mathcal{O}\Big(\frac{1}{N}\Big), \qquad N\to \infty 
\end{align*}
because $(Y_n)_{n\in\bN}$ are independent (here we used the result from Step $1$). For the variance we have 
\begin{align}\label{variance_estimation}
  \text{Var}[R_k(\cdot,N)] &=   \frac{1}{N^2}\sum_{\ell = 0}^{k}\hspace{-6mm} 
  \sum_{ \substack{i_1,\ldots, i_k\ls N\\ j_1,\ldots, j_{k}\ls N\\ \mathrm{distinct}, \;\,(i_1,...,i_k) \neq (j_1,...,j_k)
\\ \#\{i_1,\ldots,i_k\} \cap \{j_1,\ldots,j_k\} = \ell}}\hspace{-8mm} \text{Cov} (X_{i_1,...,i_k},X_{j_1,...,j_k}) \nonumber
\\+ &\frac{1}{N^2}\sum_{\substack{i_1,\ldots, i_{k}\ls N\\ \mathrm{distinct}}} \mathrm{Var}[ \one_{[\frac{a_1}{N}\ls Y_{i_1}-Y_{i_2}\ls \frac{b_1}{N}]}\cdots \one_{[\frac{a_{k-1}}{N}\ls Y_{i_1}-Y_{i_{k}}\ls \frac{b_{k-1}}{N}]}] .
\end{align}
In the first sum appearing in \eqref{variance_estimation}, the terms corresponding to $\ell=0$ or $1$ are equal to $0.$ Applying \eqref{covariance_bound} for the terms corresponding to $2 \ls \ell \ls k$ in \eqref{variance_estimation}, we obtain 
\begin{align*}\text{Var}[R_k(\cdot,N)] = &\frac{1}{N^2}\sum_{\substack{i_1,\ldots, i_{k}\ls N\\ \mathrm{distinct}}} \mathrm{Var}[ \one_{[\frac{a_1}{N}\ls Y_{i_1}-Y_{i_2}\ls \frac{b_1}{N}]}\cdots \one_{[\frac{a_{k-1}}{N}\ls Y_{i_1}-Y_{i_{k}}\ls \frac{b_{k-1}}{N}]}] +
\mathcal{O}(\frac{1}{N}).
\end{align*} 
Since 
\begin{align*}
\frac{1}{N^2}&\sum_{\substack{i_1,\ldots, i_{k}\ls N\\ \mathrm{distinct}}} \mathrm{Var}[ \one_{[\frac{a_1}{N}\ls Y_{i_1}-Y_{i_2}\ls \frac{b_1}{N}]}\cdots \one_{[\frac{a_{k-1}}{N}\ls Y_{i_1}-Y_{i_{k}}\ls \frac{b_{k-1}}{N}]}]\\
&= \frac{1}{N^2}\sum_{\substack{i_1,\ldots, i_{k}\ls N\\ \mathrm{distinct}}}\Big( \bE[\one_{[ \frac{a_1}{N}\ls Y_{i_1}-Y_{i_2}\ls \frac{b_1}{N}]}\cdots \one_{[\frac{a_{k-1}}{N} \ls Y_{i_1}-Y_{i_{k}}\ls \frac{b_{k-1}}{N}]}]- \\[-3ex]  & \qquad \qquad \qquad \qquad  \bE[\one_{[ \frac{a_1}{N}\ls Y_{i_1}-Y_{i_2}\ls \frac{b_1}{N}]}\cdots \one_{[ \frac{a_{k-1}}{N}\ls Y_{i_1}-Y_{i_{k}}\ls \frac{b_{k-1}}{N}]}]^2\Big) \\[1ex]
&= \frac{1}{N^2}\sum_{\substack{i_1,\ldots, i_{k}\ls N\\ \mathrm{distinct}}}\Big(\frac{a_1-b_1}{N}\cdot \ldots \cdot \frac{a_{k-1}-b_{k-1}}{N} - \prod_{r=1}^{k-1}\Big( \frac{a_r-b_r}{N} \Big)^2 \Big) \\
& =  \mathcal{O}_k\big(\frac{1}{N}\big),
\end{align*}
we deduce that 
\[ \text{Var}[R_k(\cdot,N)] = \mathcal{O}_k\big(\frac{1}{N}\big), \qquad N\to \infty.\]
Consider now the sequence $N_m = \lfloor m^{1+\gamma}\rfloor, m\gs 1$ where $\gamma>0$. Fix $\varepsilon>0$ and let  $A_N = \{\om \in [0,1] : |R_k(\om,N)-\mathbb{E}[R_k(\cdot, N)] | \gs \varepsilon\}, N\gs 1.$ By Chebyshev's inequality, the Lebesgue measure of $A_N$ satisfies $ \lambda(A_N) = \mathcal{O}(1/N), N\to \infty. $ Then the Borel\,--\,Cantelli lemma implies that $$ \lambda\big(\limsup\limits_{m\to\infty}A_{N_m}\big)=0,$$ and since $\varepsilon>0$ was arbitrarily chosen, we conclude that for almost all $\om\in [0,1]$ we have $\lim\limits_{m\to\infty}R_k(\om,N_m) =  (b_1-a_1) \cdots  (b_{k-1}-a_{k-1}).$ 
The fact that $R_k(\cdot,N_m)$ converges almost surely to this limit for any choice of the scalars follows by an intersection over a countable dense set of $(a_1,\ldots, b_{k-1}).$   
\par It remains to prove that for the same values of $\om$, $R_k(\om,N)$ will converge to the value $(b_1-a_1) \cdots  (b_{k-1}-a_{k-1})$ for any choice of $a_1,\ldots,b_{k-1}$. This will follow from the fact that  when $N_m \ls N < N_{m+1}$ we have 
\begin{align*}
\frac{1}{N} \sum_{\substack{i_1,\ldots, i_{k}\ls N_m\\ \mathrm{distinct}}}   \hspace{-2mm}\one_{[ \frac{a_1}{N_m}\frac{N_m}{N} \ls Y_{i_1}-Y_{i_2}\ls \frac{b_1}{N_m}\frac{N_m}{N}]}(\om)\cdots \one_{[ \frac{a_{k-1}}{N_m}\frac{N_m}{N}\ls Y_{i_1}-Y_{i_{k}}\ls \frac{b_{k-1}}{N_m}\frac{N_m}{N}]}(\om) \ls R_k(\om,N) & \\ 
\ls \frac{1}{N}\!\!\sum_{\substack{i_1,\ldots, i_{k}\ls N_{m+1}\\ \mathrm{distinct}}}   \hspace{-4mm}\one_{[\frac{a_1N_{m+1}}{N_{m+1}N}  \ls Y_{i_1}-Y_{i_2}\ls \frac{b_1N_{m+1}}{N_{m+1}N} ]}(\om)\cdots \one_{[ \frac{a_{k-1}N_{m+1}}{N_{m+1}N} \ls Y_{i_1}-Y_{i_{k}}\ls \frac{b_{k-1}N_{m+1}}{N_{m+1}N} ]}(\om) &
\end{align*}
and the fact that $N_m/ N_{m+1} \to 1$ as $m\to \infty$.

\subsection*{Acknowledgements} We would like to thank Professor C. Aistleitner for suggesting this direction of research. We also thank the anonymous referee for many valuable remarks.

\normalsize

\end{document}